\title{A Neural Network Kernel Decomposition for Learning Multiple Steady States in Parameterized Dynamical Systems\thanks{Submitted to the editors DATE.
\funding{This work was partially supported by 
the NSF through the grant DMS-2111608 and DMS-2240180 (Y.Z. and X.T.), the grant DMS-2012266 (Y.Z. and A.C.), and the grant DMS-2208465 (B.L.), and a gift from Intel (A.C.).
%%\textcolor{red}{PLEASE ADD}, 
}}}
\author{Yimeng Zhang\thanks{Department of Mathematics, University of California, San Diego, CA 92093, United States
(\email{yiz014@ucsd.edu}, \email{acloninger@ucsd.edu}, \email{bli@ucsd.edu}, \email{xctian@ucsd.edu}).}
\and Alexander Cloninger\footnotemark[2] \thanks{Halicioglu Data Science Institute, University of California, San Diego, Lo Jolla, CA 92093, United States} 
\and Bo Li\footnotemark[2] \thanks{Ph.D.\ Program in Quantitative Biology, University of California, San Diego, Lo Jolla, CA 92093, United States}
\and Xiaochuan Tian\footnotemark[2]}
\pgfplotsset{compat=newest, compat/show suggested version=false}
\tikzset{>=latex} 
\colorlet{myred}{red!80!black}
\colorlet{myblue}{blue!80!black}
\colorlet{mygreen}{green!60!black}
\colorlet{mydarkred}{myred!40!black}
\colorlet{mydarkblue}{myblue!40!black}
\colorlet{mydarkgreen}{mygreen!40!black}
\tikzstyle{node}=[very thick,circle,draw=myblue,minimum size=22,inner sep=0.5,outer sep=0.6]
\tikzstyle{connect}=[->,thick,mydarkblue,shorten >=1]
\tikzset{ % node styles, numbered for easy mapping with \nstyle
  node 1/.style={node,mydarkgreen,draw=mygreen,fill=mygreen!25},
  node 2/.style={node,mydarkblue,draw=myblue,fill=myblue!20},
  node 3/.style={node,mydarkred,draw=myred,fill=myred!20},
 %ellipsenode 1/.style={ellipsenode,black,draw=myred,fill=myred!20},
}
\def\nstyle{int(\lay<\Nnodlen?min(2,\lay):3)}
\newtheorem{thm}{Theorem}[section]
\newtheorem{remark}[thm]{\textit{Remark}}
\newcommand{\XT}[1]{{\color{orange} {\bf Xiaochuan:} [#1] }}
\newcommand{\BL}[1]{{\color{red} {\bf Bo:} [#1] }}
\newcommand{\beq}{\begin{equation}}
\newcommand{\eeq}{\end{equation}}
\newcommand{\argmin}{\text{argmin}}
\def\cE{\mathcal{E}}
\def\cG{\mathcal{G}}
\def\cT{\mathcal{T}}
\def\cL{\mathcal{L}}
\def\cN{\mathcal{N}}
\def\cO{\mathcal{O}}
\def\cT{\mathcal{T}}
\def\cU{\mathcal{U}}
\def\N{\mathbb{N}}
\def\R{\mathbb{R}}
\def\Z{\mathbb{Z}}
\def\sD{\mathscr{D}}
\newcommand{\al}{\alpha}
\newcommand{\del}{\delta}
\newcommand{\ep}{\epsilon}
\newcommand{\sig}{\sigma}
\newcommand{\om}{\omega}
\newcommand{\Om}{\Omega}
\begin{document}

\maketitle

% REQUIRED
\begin{abstract}
We develop a data-driven machine learning approach to identifying parameters
with steady-state solutions, locating such solutions, and determining their linear stability
for systems of ordinary differential equations and dynamical systems with parameters. 
Our approach first constructs target functions for these tasks, then designs a parameter-solution neural network (PSNN) that couples a parameter neural network and a solution neural network to approximate the target functions. 
We further develop efficient algorithms to train the PSNN and locate steady-state solutions.
An approximation theory for the target functions with PSNN is developed
based on kernel decomposition.
%Our approach begins with the construction of target functions that can be used to identify parameters with steady-state solution and the linear stability of such solutions. We design a parameter-solution neural network (PSNN) that couples a parameter neural network and a solution neural network to approximate the target function, and develop efficient algorithms to train the PSNN and to locate steady-state solutions. We also present a theory of approximation of the target function by our PSNN based on the neural network kernel decomposition.
Numerical results are reported to show that our approach is robust in finding solutions, identifying phase boundaries, and classifying solution stability across parameter regions.
%identifying the phase boundaries separating different regions in the parameter space corresponding to no solution or different numbers of solutions and in classifying the stability of solutions. 
These numerical results also 
validate our analysis. While this study focuses on steady states of parameterized dynamical systems, our approach is equation-free and is applicable generally to finding solutions for parameterized nonlinear systems of algebraic equations.
%\textcolor{red}{Since our approach does not rely on dynamics, it is also applicable to %finding solutions for parameterized nonlinear systems of algebraic equations, although the %primary focus in this study centers on steady states of parameterized dynamical systems.} 
Some potential improvements and future work are discussed. 
\end{abstract}

% REQUIRED
\begin{keywords}
Parameterized dynamical systems, steady-state solutions, parameterized nonlinear systems of equations,  parameter-solution neural networks, 
target functions, 
 convergence of neural
networks, parameter phase boundaries.
\end{keywords}

% REQUIRED
\begin{MSCcodes}
 65D15, 41A35, 92B20, 68T07
%Primary 65N06, 65N12,65N35, 35B50, 35J15; Secondary 65R20, 35J70, 45A05
\end{MSCcodes}

%%%%%%%%%%%%%%%%%%%%%%%%%%%Section 1 %%%%%%%%%%%%%%%%%%%%%%%%%%

\section{Introduction}
\label{sec:Introduction}

We develop a machine learning approach to identifying the parameters for which the following 
general parameterized system of autonomous ordinary differential equations (ODEs)
has steady-state solutions (or, equivalently, steady states, or equilibrium solutions):  
\begin{equation}
    \label{ODE00}
\frac{du_i}{dt} = G_i (u_1, \dots, u_n; \theta_1, \dots, \theta_m),  \qquad i = 1, \dots, n, 
\end{equation}
where all $\theta_1, \dots, \theta_m \in \R$ are parameters and 
all $G_1, \dots, G_n$ are given functions. For those parameters with steady-state solutions, we further
determine the linear stability of such solutions.

Parameterized ODEs and dynamical systems such as the system of equations (\ref{ODE00}) 
are powerful and frequently used models for complex systems in social, physical, and biological sciences. 
Properties of solutions $u_i = u_i(t) $ $(i = 1, \dots, n)$ of such equations, such as 
their existence, stability, and long-time behavior, depend sensitively on the parameters, 
and bifurcations can occur when parameters are varied
\cite{Hirsch2012,Perko2001,Strogatz2015}. 
Yet, determining the exact values of the parameters with distinguished 
solution properties is extremely challenging, often due to the limited data from archiving, 
experiment, and computer simulations. 
For instance, ODEs (\ref{ODE00}) serve as main mathematical models of ecology and evolution
\cite{Hastings_PopBiol1996,Turchin_PopDyn2003}. For an ecological system, many 
different species interact with each other and also through 
the competition for resources. Their populations
are solutions of ODEs of the form (\ref{ODE00}) with parameters being the interaction 
strength, rates of consumption, etc. As such a system can be very large with many species, 
there are many different types of parameters, difficult to collect or measure.
Another common example of application of ODEs (\ref{ODE00}) is chemical reaction \cite{Hjortso_ChemReac2010}, particularly gene regulatory networks in system biology \cite{Polynikis2009}. 
Concentrations of many different chemical or biological species are modeled
by ODEs with parameters such as reaction rates that are often difficult
to measure experimentally. 
It is noted that many complex spatio-temporal patterns, such as Turing patterns, arise from 
spatio-temporal perturbations
of steady-state solutions of ODE systems reduced from reaction-diffusion
systems modeling chemical reactions \cite{Turing1952,Walgraef1997}. 
Examples of such reaction-diffusion systems include 
the Gray--Scott and Gierer--Menhardt models 
\cite{GiererMeinhardt1972,GrayScott83,GrayScott84,GrayScott85,Pearson1993,Walgraef1997}.

Computing multiple steady states of a nonlinear system is a complex challenge that has long driven the development of various numerical methods.
These include variational and non-variational methods \cite{allgower2012numerical,deuflhard2005newton,farrell2015deflation,lo2012robust,xu1994novel,yin2020construction,zhou2017solving}. 
 More recently, neural network-based techniques have emerged for learning multiple solutions of nonlinear systems \cite{huang2022hompinns,zheng2023hompinns}. 
Additionally, various computational methods have been developed for bifurcation studies in parameterized systems \cite{hao2022learn,pichi2023artificial,pichi2020reduced}.   
When dealing with parameterized nonlinear systems, the aforementioned methods require repeated computations across various parameter values, resulting in significant computational costs for identifying steady states.
To address this challenge, we propose a data-driven machine learning approach that efficiently learns multiple solutions of parameterized systems over a broad parameter range. Existing methods can be used to generate training data for our algorithm. This approach is equation-free and broadly applicable to parameterized nonlinear systems.
We demonstrate its effectiveness in studying the steady states of parameterized dynamical systems, including locating solutions, predicting stability, and generating phase diagrams to identify parameter regions with multiple solutions.

To achieve our objectives, we develop a new neural network framework named the {\it Parameter-Solution Neural Network} (PSNN).
The PSNN architecture, as illustrated in \Cref{fig:NN}, comprises two crucial subnetworks: the parameter network and the solution network. The subnetworks are interconnected 
 by the inner product of their output vectors, which are specifically designed to be of the same dimension. 
 %%\BL{I revised this next sentence. Changed "designed" to "constructed", 
 %%removed $\in [0, 1]$ as we use a scaled Sigmoid, changed "funciton for" to "function of", 
 %%and also changed "solution pairto "solution-parameter pair.}
 The network is constructed
 to approximate a target function $\Phi = \Phi(U, \Theta)$
 of $U=(u_1, u_2, \cdots, u_n)$ and $\Theta=(\theta_1,\theta_2, \cdots, \theta_m)$ that indicates the probability of $(U, \Theta)$ being a solution-parameter pair. 
 The key theoretical development in our work is a universal approximation theorem with error bound estimates for PSNN to approximate a given function $\Phi(U, \Theta)$ that exhibits different degrees of smoothness in its two arguments. In particular, by our design of the target function $\Phi$ that will be introduced shortly is smooth in $U$ and only piecewise smooth in $\Theta$, which enables us to model phase transitions. 
 %%\BL{I revised the first half of this sentence.}
 For the approximation theorem of the convectional fully connected neural networks, we refer the readers to \cite{petersen2018optimal,yarotsky2017error}.
 Of particular note is that the approximation theory for piecewise smooth functions presented in \cite{petersen2018optimal} by fully connected networks is a crucial stepping stone of our new approximation theory developed for $\Phi$ by PSNN.
 The key to our approximation theory is a spectral decomposition of $\Phi(U,\Theta)$ with rate estimates for the decay of eigenvalues. 
 %%\BL{changed "rates estimate of eigenvalues decays" to rate estimates 
 %%for the decay of eigenvalues".}
This is also reminiscent of principal component analysis (PCA) in statistics, singular value decomposition (SVD) in linear algebra, and Karhunen–Lo\`eve theorem \cite{schwab2006karhunen} in the realm of stochastic processes. By utilizing the spectral decomposition and classical approximation theory for fully connected networks, we can attain a new approximation theory for PSNN. 
%%(need to cite twin network? other potential applications?)

\begin{figure}[htp]
   \begin{tikzpicture}[x=1.6cm,y=1cm]
      \readlist\Nnod{3,3,3,2} % array of number of nodes per layer
  \readlist\Nstr{m,k,N} % array of string number of nodes per layer
  \readlist\Cstr{\theta, h,\Psi} % array of coefficient symbol per layer
  \def\yshift{0.55} % shift last node for dots
  
  % LOOP over LAYERS
  \foreachitem \N \in \Nnod{
    \def\lay{\Ncnt} % alias of index of current layer
    \pgfmathsetmacro\prev{int(\Ncnt-1)} % number of previous layer
    \foreach \i [evaluate={\c=int(\i==\N); \y=\N/2-\i-\c*\yshift;
                 \x=\lay; \n=\nstyle;
                 \index=(\i<\N?int(\i):"\Nstr[\n]");}] in {1,...,\N}{ % loop over nodes
      % NODES
      \ifnumcomp{\lay}{=}{1}{
         \node[node \n] (N\lay-\i) at (\x,\y) {$\strut\Cstr[\n]_{\index}$};
      }
      
      % CONNECTIONS
      \ifnumcomp{\lay}{=}{2}{ % connect to previous layer
        \node[node \n] (N\lay-\i) at (\x,\y) {$h^{1}_{P,\index}$};
        \foreach \j in {1,...,\Nnod[\prev]}{ % loop over nodes in previous layer
          \draw[white,line width=1.2,shorten >=1] (N\prev-\j) -- (N\lay-\i);
          \draw[connect] (N\prev-\j) -- (N\lay-\i);
        }
        \ifnum \lay=\Nnodlen
          %%\draw[connect] (N\lay-\i) --++ (0.5,0); % arrows out
        \fi
      }{
        %%\draw[connect] (0.5,\y) -- (N\lay-\i); % arrows in
      }
      \ifnumcomp{\lay}{=}{3}{ % connect to previous layer
        \node[node \n] (N\lay-\i) at (\x,\y) {$h^{L_1}_{P,\index}$};
        \foreach \j in {\i}{ % loop over nodes in previous layer
          \path (N\prev-\j) --++ (N\lay-\j) node[midway,scale=1.6] {$\cdots$};
          %%\draw[connect] (N\prev-\j) -- (N\lay-\i);
        }
        \ifnum \lay=\Nnodlen
          %%\draw[connect] (N\lay-\i) --++ (0.5,0); % arrows out
        \fi
      }{
        %\draw[connect] (0.5,\y) -- (N\lay-\i); % arrows in
      }
      \ifnumcomp{\lay}{=}{4}{ % connect to previous layer
         \node[node \n] (N\lay-\i) at (\x,\y) {$\strut\Cstr[\n]_{\index}$};
        \foreach \j in {1,...,\Nnod[\prev]}{ % loop over nodes in previous layer
          \draw[white,line width=1.2,shorten >=1] (N\prev-\j) -- (N\lay-\i);
          \draw[connect] (N\prev-\j) -- (N\lay-\i);
        }
        \ifnum \lay=\Nnodlen
          %%\draw[connect] (N\lay-\i) --++ (0.5,0); % arrows out
        \fi
      }{
        %%\draw[connect] (0.5,\y) -- (N\lay-\i); % arrows in
      }
    }
    \path (N\lay-\N) --++ (0,1+\yshift) node[midway,scale=1.6] {$\vdots$}; % dots
  }
  
  % LABELS
  \node[above=3,align=center,mydarkgreen] at (N1-1.90) {};
   \node[right=0.5cm,align=center,mydarkred] (p-network) at (N2-1.90) {};
  \node[above=2,align=center,mydarkblue] at  (p-network) (p-hiddenlayer){\textbf{Parameter-network}};
  \node[above=3,align=center,mydarkred](p-outvec) at (N\Nnodlen-1.90) {};
  %\node[above=0.1cm,align=center,black] at (p-hiddenlayer){\textbf{Parameter-network}};

  \readlist\Nnod{3,3,3,2} % array of number of nodes per layer
  \readlist\Nstr{n,p,N} % array of string number of nodes per layer
  \readlist\Cstr{u,h^{(\prev)},\Xi} % array of coefficient symbol per layer
  %\def\yshiftt{3.55} % shift last node for dots
  
  % LOOP over LAYERS
  \foreachitem \N \in \Nnod{
    \def\lay{\Ncnt} % alias of index of current layer
    \pgfmathsetmacro\prev{int(\Ncnt-1)} % number of previous layer
    \foreach \i [evaluate={\c=int(\i==\N); \y=\N/2-\i-\c*\yshift;
                 \x=\lay; \n=\nstyle;
                 \index=(\i<\N?int(\i):"\Nstr[\n]");}] in {1,...,\N}{ % loop over nodes
      % NODES

      \ifnumcomp{\lay}{=}{1}{
        \node[node \n] (N\lay-\i) at (\x,\y-4.6) {$\strut\Cstr[\n]_{\index}$};
      }
      
      % CONNECTIONS
      \ifnumcomp{\lay}{=}{2}{ % connect to previous layer
        \node[node \n] (N\lay-\i) at (\x,\y-4.6) {$h^{1}_{S,\index}$};
        \foreach \j in {1,...,\Nnod[\prev]}{ % loop over nodes in previous layer
          \draw[white,line width=1.2,shorten >=1] (N\prev-\j) -- (N\lay-\i);
          \draw[connect] (N\prev-\j) -- (N\lay-\i);
        }
        \ifnum \lay=\Nnodlen
          %%\draw[connect] (N\lay-\i) --++ (0.5,0); % arrows out
        \fi
      }{
        %\draw[connect] (0.5,\y) -- (N\lay-\i); % arrows in
      }
      \ifnumcomp{\lay}{=}{3}{ % connect to previous layer
        \node[node \n] (N\lay-\i) at (\x,\y-4.6) {$h^{L_2}_{S,\index}$};
        \foreach \j in {\i}{ % loop over nodes in previous layer
          \path (N\prev-\j) --++ (N\lay-\j) node[midway,scale=1.6] {$\cdots$};
          %%\draw[connect] (N\prev-\j) -- (N\lay-\i);
        }
        \ifnum \lay=\Nnodlen
          %%\draw[connect] (N\lay-\i) --++ (0.5,0); % arrows out
        \fi
      }{
        %\draw[connect] (0.5,\y) -- (N\lay-\i); % arrows in
      }
      \ifnumcomp{\lay}{=}{4}{ % connect to previous layer
        \node[node \n] (N\lay-\i) at (\x,\y-4.6) {$\strut\Cstr[\n]_{\index}$};
        \foreach \j in {1,...,\Nnod[\prev]}{ % loop over nodes in previous layer
          \draw[white,line width=1.2,shorten >=1] (N\prev-\j) -- (N\lay-\i);
          \draw[connect] (N\prev-\j) -- (N\lay-\i);
        }
        \ifnum \lay=\Nnodlen
          %%\draw[connect] (N\lay-\i) --++ (0.5,0); % arrows out
        \fi
      }{
        %\draw[connect] (0.5,\y) -- (N\lay-\i); % arrows in
      }

    }
    \path (N\lay-\N) --++ (0,1+\yshift) node[midway,scale=1.6] {$\vdots$}; % dots
  }
  
  % LABELS

  \node[above=3,align=center,mydarkgreen] at (N1-1.70) {};
  \node[right=0.5cm,align=center,mydarkred] (s-network) at (N2-1.90) {};
  \node[above=2, align=center,mydarkblue](s-hiddenlayer) at (s-network){\textbf{Solution-network}};
  \node[above=3,align=center,mydarkred] (s-outvec) at (N\Nnodlen-1.90) {};
  %\node[below=4.2cm,align=center,black] at (s-hiddenlayer){\textbf{Solution-network}};

  \node[right=4cm, ellipse, draw=myred, fill = myred!20, minimum width = 1cm, minimum height = 1cm, very thick] (Innerproduct) at  (s-hiddenlayer) {$\hat{\Phi}_{\text{PSNN}} = \Psi \cdot \Xi$};
  \node[below=1.3cm,align=center,black] (p-point)at (p-outvec){};
  \node[below=1.4cm,align=center,black](s-point) at (s-outvec){};
  \draw[connect] (p-point) -- (Innerproduct);
  \draw[connect] (s-point) -- (Innerproduct);
  %%\node[right=2.6cm, circle, draw=black, fill = myred!20, very thick] (phi-hat) at  (Innerproduct) {$\hat{\Phi}_{\rm PSNN}$};
  \node[right=3cm, circle, draw=black, fill = myred!20, very thick] (phi) at  (Innerproduct) {$\Phi_{\rm PSNN}$};
  \draw[connect] (Innerproduct) -- (phi);
  %%\draw[connect] (phi-hat) -- (phi);
  \node[right=2cm](point4sigmoid)at (Innerproduct){};
  \node[above=.4cm](sigmoid)at (point4sigmoid){scaled-sigmoid};

\end{tikzpicture}

\caption{Schematic description of the structure of the parameter-solution neural network
$\Phi_{\rm PSNN}$. 
%%\BL{added $\Phi_{\rm PSNN}.$}
}
\label{fig:NN}
\end{figure}
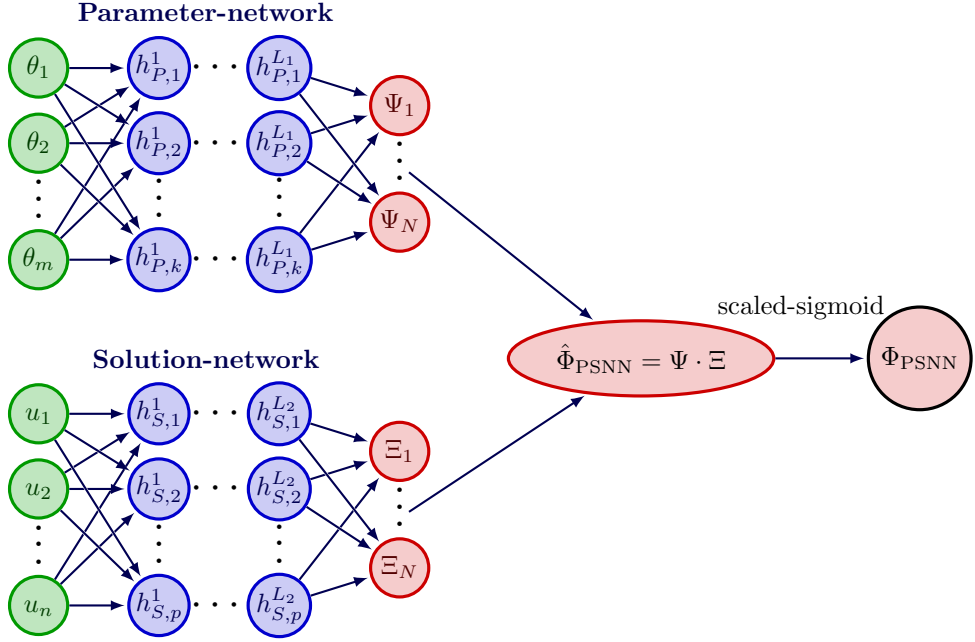

%%%%%%%%%%%%%%%%%%%%%%%
\begin{comment}
\begin{figure}
    \begin{tikzpicture}
        
    \end{tikzpicture}
    \caption{Caption}
    \label{fig:enter-label}
\end{figure}
\end{comment}
%%%%%%%%%%%%%%%%%%%%%%%%%%%%%%

%It is worth noting that the separation of the parameter %and solution networks within the PSNN architecture is 
%particularly advantageous for allowing efficient %approximation of the target function. Within this %framework, the subnetworks can be designed to align with %the specific degrees of freedom needed that are caused 
%by (different) regularities in parameters and solutions. 

\vspace{-4 mm}

Our PSNN is trained using a supervised learning approach, with training data obtained from numerical computations or experiments. After training, we apply post-processing algorithms to identify all potential solutions for given parameters and assess their stability.
We note that the neural network-based approach serves two purposes: finding solutions for a given parameter and identifying parameter regions of specific properties (such as the number of solutions and their stability). 
To demonstrate the necessity of PSNN for these two objectives, we also develop a neural network-free approach based on a mean-shift algorithm, compare its performance with PSNN, and show that the PSNN-based method outperforms it. 
In practice, experimental data may be incomplete.  To exemplify the robustness of our approach, we also introduce strategies for managing incomplete data. 
Application of our methodology to identifying the steady states of the (spatially homogeneous) 
Gray--Scott model is given at the end. Future work will explore broader applications of our approach.\\

%Our PSNN is trained using a ``supervised learning'' approach, where the training data can be obtained either by known solutions or by experiments. Following the training phase, we further develop post-processing algorithms to locate all potential solutions associated with given parameters, assess their stability, and generate phase diagrams that encapsulate the behavior of the ODE system. In practice, experimental data may be incomplete.  To exemplify the broad applicability of our approach, we also introduce effective strategies for managing incomplete data. 
%Finally, we note that while our primary focus in this work revolves around steady states of parameterized dynamical systems, our approach extends to broader applications. 
%In particular, the framework we propose is well-suited for discovering solutions for parameterized nonlinear systems of algebraic equations.  We will address the broader applications of our framework in future work. \\

%In this work, we develop a neural network approach to identifying parameters with 
%steady-state solutions that are often multiply many, locating all such solutions, 
%and determining their stability 
%for the system of ODEs with parameters (\ref{ODE00}). 

\noindent \textbf{Overview of our approach}.
%%\BL{Do we need a subtitle here?}
To facilitate easy reading, we now give an overview of our approach and describe
the key results of our studies.

%\YZ{
%In my understanding, the main purpose of our approach is to find the solutions of parameterized ODE system, especially the multi-solution system, which should be mentioned in the contribution part.
%}

%\BL{I agree that finding multiple solutions is one of the main result we have. I added the word
%``multiple" above. I think there 
%are three objectives: identify parameters with solutions that are often multiple; locate the solutions; 
%and determine their stability. }

%\YZ{
%And the target function is designed to help us learn and locate the solutions, which to my understanding is part of the numerical algorithm
%}

%\BL{It is reasonable to regard the target functions as part of the numerical algorithm. After 
%all, this work is mainly numerical.  Designing such functions is the starting point
%of the method / algorithm. Here, we try to summarize the main components of our method / algorithm, 
%and the main results of this work.}

%\begin{itemize}
%\item[(1)]
(1) {\it Target functions.}
We construct a target function for solution, $\Phi  = \Phi(U, \Theta),$ and a 
target function for linear stability, $\Phi^{\rm s} = \Phi^{\rm s}(U, \Theta), $ of all 
feasible steady-state solution vectors $U = (u_1, \dots, u_n)\in \R^n$ 
and parameter vectors $\Theta = (\theta_1, \dots, \theta_m) \in \R^m$. 
These functions have several distinguished properties that 
can be used, for instance, to determine if for a given parameter $\Theta$ 
the system of ODEs \eqref{ODE00} 
has at least one steady-state solution, and in case so, to locate all such solutions and 
determine their linear stability.
In essence, $\Phi$ is designed to produce values approximately ranging between 0 and 1, representing the probability that an input vector $U$ is a steady-state solution corresponding to an input parameter vector $\Theta$, and $\Phi^s$ further incorporates the stability information. 
The precise definition of $\Phi$ and $\Phi^s$ are found \cref{eq:Phi,eq:Phis}, respectively.
%In the simple case where each parameter vector $\Theta$ corresponds
%to no solution or exactly one solution $U^\Theta$, the target function for solution is given by
%\[
%\Phi(U, \Theta) = \chi_{\Omega_1} (\Theta) \exp{ \left( - \frac{|U - U^{\Theta}|^2}{\delta }\right)},
%\]
%where $\Omega_1$ is the set of all parameter vectors $\Theta $ for each of which
%there exists exactly one steady-state solution to the system \eqref{ODE00}, 
%$\chi_{\Omega_1}$ is the indicator or characteristic function of $\Omega_1$
%($\chi_{\Omega_1}(\Theta)$ equals $ 1$ if $\Theta\in \Om_1$ and equals $0$ otherwise), 
%and $\delta > 0$ is a numerical parameter. 
%The target function for stability $\Phi^{\rm s}$ is defined similarly; see more details in the 
%next section.

%%\item[(2)]
(2) {\it A parameter-solution neural network} (PSNN). 
We design and train a PSNN, denoted $\Phi_{\text{PSNN}}(U, \Theta, \omega),$
as a function of solution vectors $U$ and parameter vectors $\Theta$
to approximate the target function $\Phi(U, \Theta),$ where $\omega = \{\omega_{\rm P}, \omega_{\rm S}\}$ is the set of 
neural network parameters:
\[
\Phi_{\text{PSNN}}(U, \Theta, \omega) = \sigma ( \Phi_{\text{PNN}} (\Theta, \omega_{\text{P}})
\cdot \Phi_{\text{SNN}}(U, \omega_{\text{S}} )),
\]
Here, $\Phi_{\text{PNN}} (\cdot, \omega_{\text{P}})$
is a parameter neural network and $\Phi_{\text{SNN}}(\cdot, \omega_{\text{S}})$
is a solution neural network, both vector-valued with output values in $\R^N$ for some
integer $N \ge 1$, where
$\omega_{\text{P}}$ and $\omega_{\text{S}}$ are the respective sets of neural network parameters. 
The dot means the inner product for two vectors in $\R^N$ and 
the function $\sigma: \R \to \R$ is a scaled sigmoid function to be defined later. 
%We use the ReLu activation function for all of our neural networks.
Figure~\ref{fig:NN} shows the structure of our PSNN.
We similarly define $\Phi^s_{\rm PSNN}$ for learning stability. 
%Once trained, the PSNN outputs a value in $(0, 1)$ approximately
%which is the probability
%that an input vector $U$ is a steady-state solution corresponding to an
%input parameter vector $\Theta.$
%A similar PSNN for the stability of steady-state solutions is constructed and trained.

%%\item[(3)]
(3) {\it Convergence and error bounds.} We prove that, under some realistic 
assumptions on the smoothness of
regions in the parameter space and solution space, any sequence of PSNNs converges
to the target function: 
\[
\Phi_{\text{PSNN}}(U, \Theta, \omega(k)) \to \Phi (U, \Theta) 
\qquad \mbox{as } k \to \infty
\]
for each pair $(U, \Theta),$ where the number of weights in $\omega(k)$ for the 
$k$th PSNN increases to infinity as $k \to \infty.$ Moreover, we provide precise
error bounds for the error between the PSNN and the target function. 
%%\BL{add more result of the analysis - using a few sentences and minimum number of notations.}

%%\item[(4)]
(4) {\it Numerical algorithms.} 
%%\BL{This part needs to be polished more.}
We design an $L^2$-type loss function and implement ADAM, a stochastic optimization 
algorithm, to minimize numerically the lost function and to train our PSNN. 
We also develop a clustering method using the K-means algorithm \cite{Kmeans}
to locate steady-state
solutions after we apply our PSNN to generate data of possible solutions. 

%%\item[(5)]
(5) {\it Numerical results.} 
We test and validate our approach on the spatially homogeneous Gray--Scott model.
Extensive numerical results show that our approach effectively finds steady-state solutions and detects phase boundaries. 
Additionally, our method remains robust even with incomplete data. We demonstrate that the PSNN-based approach outperforms non-neural network methods, such as the mean-shift algorithm \cite{Mean-shift}. 
%We test and validate our approach on the Gray--Scott model
%which is a system of two equations of two unknow functions with two parameters for which analytical formulas
%are available for steady-state solutions and their stability. Our extensive numerical results
%show that our approach can determine parameters with steady-state solutions and 
%detect the boundaries that separate different 
%parameter regions with no solution and with multiple solutions, respectively. Even 
%with the PSNN trained only using incomplete set of training data, 
%our approach is still able to 
%locate solutions. Our clustering method based on PSNN has proven to be more effective compared to other approaches that do not utilize trained neural networks, such as the mean-shift algorithm \cite{Mean-shift}. 
\\

The rest of the paper is organized as follows: In section~\ref{sec:Theory}, 
we formulate our problem and define our target functions, construct our 
neural networks, and provide detailed numerical algorithms for training and using
these networks. In section~\ref{sec:Analysis}, we present a 
convergence analysis and error estimates for our PSNNs. 
In section~\ref{sec:Results}, we report our numerical results to show how our 
PSNNs work and also to validate our analysis. Finally, in section~\ref{sec:Conclusions}, we draw conclusions and discuss some potential improvements and future work. 
Appendix collects an algorithm of the mean-shift method that is used
only for comparison.

%%%%%%%%%%%%%%%%%%%%%%%%%%%Section 2 %%%%%%%%%%%%%%%%%%%%%%%%%%

\section{Parameter-Solution Neural Networks and Numerical Algorithms}
\label{sec:Theory}

%%\BL{I removed (PSNN) in the title - usually abbreviations are not defined in a title.}

%%%%%%%%%%%%%%%%%%%%%%%%%%%%%%%%%%%%%%%%%%%%%%
%%\section{Method}
\subsection{Problem formulation and assumptions}
\label{subsec:formulation}

%%\BL{How about changing the title to ``The target functions"?  Or "Problem formulation and 
%%the target functions"? Or otherwise: Definitions and assumptions?}

Let $n \ge 1$ and $m \ge 1$ be integers and $G_1, \dots, G_n$ be
%%continuously differentiable \XT{Where is this condition used?} 
functions defined on some open subset of $\R^{n \times m}.$ 
We consider the autonomous system \eqref{ODE00} of $n$ ordinary differential equations (ODEs)
for $n$ unknown functions $u_i = u_i (t)$ $(i = 1, \dots, n)$ with $m$ 
parameters $\theta_i \in {\mathbb R}$ $(j = 1, \dots m).$
%%We are interested in identifying parameters $\theta_1, \dots, \theta_m $ for which 
%%the system of ODEs \eqref{ODE00} has at least one steady-state solution.
%%In case such a solution exists, we would
%%like to find all the solutions and determine their linear stability. 
We recall that, for given parameters $\theta_1, \dots, \theta_m$, 
a steady-state solution of the system of equations \eqref{ODE00}
is a set of $n$ constants (or, more precisely, constant functions)
$u_1, \dots, u_n\in {\mathbb R}$ that satisfy
\begin{equation}
    \label{Alg}
G_i (u_1, \dots, u_n; \theta_1, \dots, \theta_m) = 0, \qquad i = 1, \dots, n. 
\end{equation}
We also recall that such a solution $U = (u_1, \dots, u_n)$
%%\BL{changed U := to U = } 
is linearly stable, if 
the linearized system of \eqref{ODE00} around $U$ is asymptotically stable, and is 
unstable otherwise. A steady-state solution $U$ is linearly stable, if 
%%Let ${\mathbf J}_G(u, \theta)$ denote the Jacobian matrix of $G$ at $u$.  
either all the eigenvalues of the corresponding Jocobian matrix at $U$
have negative real part, or all the eigenvalues of such matrix have non-positive 
real part and any eigenvalue of the matrix with zero real part has the property 
that its algebraic and geometrical multiplicities are the same; cf.\ 
\cite{Hirsch2012,Perko2001}. 

We shall consider parameters (i.e., parameter vectors) $\Theta = (\theta_1, \dots, \theta_m)$ 
in a given subset $\Omega$ of $\R^m$, and consider (steady-state) solutions (i.e., 
solution vectors) $U = (u_1, \dots, u_n)$ in a given  subset $D$ of ${\mathbb R}^n$.  
We call $\Omega$ and $D$ the {\it parameter space} and the (steady-state)
{\it solution space,} respectively. Since the steady-state solutions of \eqref{ODE00} 
and their linear stability are completely determined by the functions 
$G_i: D \times \Omega \to {\mathbb R} $ $(i = 1, \dots, n)$, 
we shall focus on the system of algebraic equations \eqref{Alg}. 
%%\BL{How about removing the following part of this paragraph? These compact notions
%%are not used, and we can cite the equations (2.1) instead of (2.2).}
This system can be expressed in the following compact form using vector notation: 
\begin{equation}
\label{eq:main}
\cG(U, \Theta) = 0,  
\end{equation}
where $
\cG(U, \Theta)=(G_1(U, \Theta), \cdots, G_n(U, \Theta))^T$, for
$U\in D$,  
$\Theta\in \Omega $, and a superscript $T$ denotes the transpose. 

%%%%%%%%%%%%%%%%%%%%%%%%%%
\begin{comment}
%The problem comes from, e.g, the study of the steady states of parameterized ODEs systems.
%Our main goal is to design and analyze neural networks for studying the solutions to \cref{eq:main}.

We assume the following: 
\begin{itemize}
    \item[(A1)] The parameter space $\Omega \subset \R^m$ is a nonempty, bounded, and open set. 
    \item[(A2)] There exist a positive integer $N$, nonempty, pairwise disjoint, and open subsets $\Omega_1, \dots, \Omega_N $ of $\Omega,$ and distinct non-negative integers $\cN_1, \dots, \cN_N$ 
    such that $\overline{\Omega} = \bigcup_{i=1}^N \overline{\Omega}_i$, and for each $i$ 
    $(1 \le k \le N)$ and each parameter $\Theta = (\theta_1, \dots, \theta_m) \in \Omega_i$, the
    system (\ref{eq:main}) has exactly $\cN_i$ solutions. 
\end{itemize}
\end{comment}
%%%%%%%%%%%%%%%%%%%%%%%%%%%

Throughout, we assume the following: 
\begin{itemize}
    \item [(A1)] 
    Both the parameter space $\Om\subset\R^m$ and 
the solution space $D\subset\R^n$ are bounded open sets.
\item[(A2)]
There exist a positive integer
$M$, pairwise disjoint open subsets $\Omega_0, \Omega_1, \dots$, $\Omega_M$ of $\Omega$, 
and distinct non-negative 
integers $\cN_0, \cN_1, \dots, \cN_M$ with $\cN_0=0$ and $\cN_i>0$ ($i=1, \cdots, M$) such that 
$\overline{\Omega} = \bigcup_{i=0}^M \overline{\Omega_i}$ and 
\begin{itemize}
\item[$\bullet$]
if $\Theta \in \Omega_0$ then 
the system \eqref{eq:main} has no solutions,
\item[$\bullet$]
if $\Theta \in \Omega_i $ with $1 \le i \le M$, then  
the system \eqref{eq:main} has exactly $\cN_i$ solutions. 
\end{itemize}
We shall call all the boundaries $\Gamma_{i, {\rm soln}} = \partial \Omega_i \cap \Omega$
$(i = 0, 1, \dots, M)$ the {\it parameter phase boundaries for solution.} 
%%\XT{In addition, we need to make an assumption on the regularity of the boundary set for the convergence/error analysis.} \BL{I suggest to postpone those assumptions to the analysis part.}
Note that we allow $\Omega_0 = \emptyset.$ 
%%\BL{I added this as it's the case for Gray--Scott.}
For each $\Theta = (\theta_1, 
\dots, \theta_m) \in \Omega_i$ with $1 \le i \le M$, we shall denote by 
\begin{equation}
    \label{eq:SThetai}
S^\Theta =\{ \hat{U}_1^\Theta,  \cdots, \hat{U}_{\cN_i}^\Theta \}  \subset D,
\end{equation}
the set of solutions to \eqref{eq:main} corresponding to $\Theta \in \Omega_i$. 
Note that each $\hat{U}_j^\Theta$ depends implicitly 
on $i$ as $\Theta \in \Omega_i.$ 
%%We shall keep this notation when no confusion arises. 
Additionally, we denote $S^\Theta =\emptyset$ if $\Theta \in \Omega_0.$

%%%%%%%%%%%%%%%%%%%%%%%%
\begin{comment}
For each $i $ $(1 \le i \le M)$, the solutions $\hat{U}_j^\Theta$ $(j = 1,\dots, \cN_i)$
depend continuously on $\Theta \in \Omega_i$, i.e., \BL{Please check this assumption - we need this for
the convergence / error analysis. Also, we may want to postpone these assumptions to the
analysis part.}
\[
\max_{1 \le k, l \le \cN_i} \left\| \hat{U}_k^{\Theta_1} - \hat{U}_l^{\Theta_2} \right\|
\to 0 \qquad \mbox{if } \Theta_1, \Theta_2 \in \Omega_i \quad { and } \quad \Theta_1 \to \Theta_2. 
\]
\end{comment}
%%%%%%%%%%%%%%%%
\item[(A3)]
For each $i \in \{ 1, \dots, M \}$, there exist disjoint
open subsets $\Omega_{i, 1}$ and $\Omega_{i, 2}$ of $\Omega_i$ such that
$\overline{\Omega_i} =  \overline{\Omega_{i, 1}} \bigcup \overline{\Omega_{i,2}}$ and 
\begin{itemize}
\item[$\bullet$]
any solution corresponding to a parameter $\Theta \in \Omega_{i, 1}$ is (linearly) unstable, 
\item[$\bullet$]
any solution corresponding to a parameter $\Theta \in \Omega_{i, 2}$ is (linearly) stable. 
\end{itemize}
We shall call all the boundaries $\Gamma_{i, {\rm stab}} = 
\partial \Omega_{i, 1} \bigcap \partial \Omega_{i, 2}$ $(i = 1, \dots, M) $
the {\it parameter phase boundaries for stability}. 
Note that we allow one of these two open sets $\Omega_{i, 1}$ and $\Omega_{i, 2}$ to be the empty set. 
%%\BL{I guess we need to assume that the stable solutions depend continuously on parameters. If so, 
%%we can make the assumptions in the analysis part.}
\end{itemize}
Additional assumptions will be made in the convergence analysis in section~\ref{sec:Analysis}.
%%%%%%%%%%%%%%%%%%%%%
\begin{comment}
We assume that both the parameter space $\Om\subset\R^m$ and 
the solution space $D\subset\R^n$ are bounded open sets. We also assume that there exist a positive integer
$M$, pairwise disjoint open subsets $\Omega_0, \Omega_1, \dots, \Omega_M$ of $\Omega$, and distinct non-negative 
integers $\cN_0, \cN_1, \dots, \cN_M$ with $\cN_0=0$ and $\cN_i>0$ ($i=1,2,\cdots, M$) such that 
\begin{itemize}
    \item[(A1)]
$\overline{\Omega} = \bigcup_{i=0}^M \overline{\Omega_i}$, and 
\item[(A2)]
If $\Theta = (\theta_1, \dots, \theta_m) \in \Omega_0$ then 
the system \eqref{eq:main} has no solutions. 
If $i \in \{ 1, \dots, M\}$ and $\Theta = (\theta_1, \dots, \theta_m) \in \Omega_i$, then  
the system \eqref{eq:main} has exactly $\cN_i$ solutions. 
%%($\cN_0=0$ means no solution for $\Theta\in \Om_0$.)
\end{itemize}
%and that can be decomposed into a finite number of disjoint open subsets $\{\Om_i\}_{i=1}^N$ such that %$\overline{\Omega} = \bigcup \overline{\Om_i}$ and on each subset $\Om_i$, the number of solutions to %\cref{eq:main} is fixed and denoted as $\cN_i\in \Z^+\cup\{0\}$. 
%In addition, $\cN_i \neq \cN_j$ for %$i\neq j$. More specifically, given $i\in \{1, \cdots, N \}$, 
Note that we allow $\Omega_0 = \emptyset.$ 
%%\BL{I added this as it's the case for Gray--Scott.}
Given $i \in \{ 1, \dots, M\}$. For each $\Theta = (\theta_1, 
\dots, \theta_m) \in \Omega_i$, we shall denote by 
\begin{equation}
    \label{eq:SThetai}
S^\Theta =\{ \hat{U}_1^\Theta,  \cdots, \hat{U}_{\cN_i}^\Theta \}  \subset D,
\end{equation}
the set of solutions to \eqref{eq:main} corresponding to $\Theta \in \Omega_i$. 
Note that each $\hat{U}_j^\Theta$ depends implicitly 
on $i$ as $\Theta \in \Omega_i.$ We shall keep this notation
when no confusion arises. 
%For each $j$ $(1 \le j \le \cN_i)$, we denote
%%For any solution vector $\hat{U}_j^\Theta \in D,$ we denote
%%\[
%%\hat{U}_j^\Theta = (\hat{u}_{j,1}^\Theta, \cdots, \hat{u}_{j,n}^\Theta) \in D. 
%%\]
Additionally, we denote $S_0^\Theta =\emptyset$. 
\end{comment}
%%%%%%%%%%%%%%%%%%%%%%%%%%%%%

%%%%%%%%%%%%%%%%%%%%%%%%%%%%%%%%%%%%%%%%%%%%%%%%%%%%%%%%%%%%%%%%%%%%%
\subsection{Target functions}
\label{subsec:TargetFunctions}

We now construct functions on the product of the solution space and the parameter space 
that can be used to 
%%\BL{removed "help"} 
identify whether specific parameters correspond to (multiple) solutions and, if so, assess the stability of those solutions. We shall call such functions {\it target functions,}
and will design and train neural networks to approximate these functions. 

We define a {\it target function for solution}, $\Phi: D \times \Om \to \R,$ by  
%%\XT{change this $N$ or the other $N$ which stands for the output vector dimension}
\begin{equation}
\label{eq:Phi}
\Phi(U, \Theta) = \sum_{i=1}^M \chi_{\Omega_i}(\Theta)\sum_{j=1}^{\cN_i} \exp\left(- \frac{| U - \hat{U}^\Theta_j|^2}{\delta(\Theta)}\right)\qquad \forall (U,\Theta)\in D\times \Om,
\end{equation}
where $\chi_{\Omega_i}$ denotes the indicator function of the set $\Omega_i$ and $\delta:  \bigcup_{i=1}^M \Om_i \to (0,\infty)$,  called a deviation function, is defined by 
\beq
\label{eq:delta}
\delta(\Theta):=
\left\{
\begin{aligned}
&
\max\left\{\frac{1}{4} \min_{\hat{U}^\Theta_j,\hat{U}^\Theta_{j'} \in S^\Theta,  j \neq j'} \|\hat{U}^\Theta_j-\hat{U}^\Theta_{j'} \|_2, \delta_0 \right\} \quad &\text{if } |S^{\Theta}|\geq 2\\
& \delta_1 \quad&\text{if } |S^{\Theta}|=1
\end{aligned}
\right.
\eeq
for a small $\delta_0>0$ to ensure $\delta(\Theta)\geq \delta_0>0$, and $\delta_1$ is taken as a portion of the diameter of the domain $D$.
%\BL{This $\delta(\Theta)$ is only defined $\Theta in \Omega_i$ with $2 \le i \le M$, right?} %\XT{Thanks. It is for $1\leq i\leq M$. The domain of $\delta$ is changed.}
Note that the function $\Phi$ is non-negative and is also considered a  
piecewise Gaussian mixture function.
%%\XT{I changed the name. Gaussian mixture is usually referred 
%%to as the weighted sum of Gaussians.} 
%%One can pick the value $\delta_i$ 
Moreover, if $\Theta \in \bigcup_{i=1}^M \Omega_i$, then the function 
$\Phi ( \cdot, \Theta): D \to \R$ is analytic. However, for each $U \in D$, 
the function $\Phi(U, \cdot): \Omega \to \R$ is only piecewise continuous or smooth, 
provided that the solutions $\hat{U}_j^\Theta$ $(j = 1, \dots, \cN_i)$ depend on 
$\Theta \in \Omega$ continuously or smoothly. 
The deviation function $\delta$ is designed to make the Gaussian ``bumps" (i.e., 
peaks of individual Gaussian radial basis functions) well separated.

We remark that the boundaries of all sets $\Omega_i$ $(0 \le i \le M)$ and the 
exact solution set $S^\Theta$ for each $\Theta\in\Om$ 
are often unknown analytically. But they can be determined numerically
by training our neural networks that approximate the target function $\Phi.$ 

%%%%%%%%%%%%%%%%%%%%%%%
\begin{comment}
\begin{remark}
    We remark that for the definition of $\Phi$ in \cref{eq:Phi}, one needs the information of the partition $\{\Om_i\}_{i=0}^N$ of $\Om$. In practice, the partition is unknown and is not given as a part of the training data in our algorithm. Moreover, our algorithm can predict the boundary of each $\Omega_i$. More detailed discussions can be found in \Cref{subsec:PSNN,subsec:solutions,sec:Results}. The values of $\del_1,\del_2,\cdots,\del_N$ are properly chosen such that the Gaussian bumps are well separated. Specific examples are discussed in numerical experiments in \Cref{sec:Results}.
\end{remark}
\end{comment}
%%%%%%%%%%%%%%%%%%%%%%%%%

We now construct a similar function for studying the stability of solutions for a given parameter. 
For any $i \in \{ 1, \dots, M\}$,  $\Theta \in \Omega_i,$ and $\hat{U}_j^\Theta \in S^\Theta$, we denote
$s_j^\Theta = 0$ if $\hat{U}_j^\Theta $ is (linearly) stable and 
$s_j^\Theta = 1$ if $\hat{U}_j^\Theta$ is (linearly) unstable. 
We define a {\it target function for stability,} $\Phi^{\rm s}: D\times \Omega \to \R,$ by 
%%\BL{Do we need some normalizing constants to make this target function within $[-1, 1]$?}
%\begin{remark}
%%In the case that the stability of solutions is also a concern, we design a signed 
%%function $\Phi^s$ that incorporates the information on the stability of solutions. 
%%More specifically, 
%%for a given $\Theta$, let $s_{j}^\Theta \in \{ 0, 1\}$ be a label that indicates stable solution for %%$s_{j}^\Theta=0$ and unstable solution for $s_{j}^\Theta=1$. Then define 
\begin{equation}
\label{eq:Phis}
\Phi^{\rm s}(U, \Theta) = \sum_{i=1}^M \chi_{\Omega_i}(\Theta) \sum_{j=1}^{\cN_i} 
(-1)^{s_{j}^\Theta} \exp\left(- \frac{| U - \hat{U}^\Theta_j|^2}{\delta(\Theta)}\right) 
\qquad \forall (U,\Theta)\in D\times \Om. 
\end{equation}
Note that if $\Theta \in \Omega_i$ for some $i \in \{ 1 , \dots, M \}$ 
and $U \in S^\Theta,$ then $U$ is stable
if and only if $\Phi^{\rm s}(U, \Theta) \approx 1$ and $U$ is unstable 
if and only if $\Phi^{\rm s}(U, \Theta) \approx -1.$
The smoothness property of $\Phi^s$ is similar to that of $\Phi$.
%Note that for a fixed $\Theta \in \bigcup_{i=1}^M \Omega_i$ the function 
%$\Phi^{\rm s}(\cdot, \Theta): 
%D \to \R$ is analytic but for a fixed $U \in D$ the function $\Phi^{\rm s}(U, \cdot): 
%\Omega \to \R$ is discontinuous and piecewise smooth provided the solutions
%in $S^\Theta$ depend on $\Theta$ smoothly. 

%In this case, a stable solution pair $(U, \Theta)$ 
%%leads to $\Phi^s(U, \Theta) \approx 1$ and and an 
%unstable solution pair $(U, \Theta)$ leads to $\Phi^s(U, \Theta) \approx -1$. 

In the following, we will mainly use the function $\Phi$ to illustrate 
our numerical algorithms to train our neural networks that approximate $\Phi$, as
these algorithms can be readily adapted for the function $\Phi^{\rm s}.$ 

%\end{remark}

%%%%%%%%%%%%%%%%%%%%%%%%%%%%%%%%%%%%%%%%%%%%%%%%%%%%%%%%%%%%%%%%
\subsection{The architecture of the parameter-solution neural network} 
\label{subsec:PSNN}

We now construct a parameter-solution neural network (PSNN) to approximate
the target function $\Phi: D \times \Omega \to \R $ that is defined in \eqref{eq:Phi}. 
Let us fix a positive integer $N.$ We first introduce a {\it parameter neural network} 
(PNN) and a 
{\it solution neural network} (SNN) 
%%\BL{I added the abbreviations PNN and SNN.}
\[
\Phi_{\text{PNN}}( \cdot, \omega_{\text{P}}):  \Om \to \R^N
\qquad \mbox{and} \qquad 
\Phi_{\text{SNN}}(\cdot,  \omega_{\text{S}}): D \to \R^N,
\]
respectively,  where $\omega_{\text{P}}$ and $\omega_{\text{S}}$ 
denote the respective sets of neural network parameters.
These are vector-valued neural networks.
%and their numbers of hidden layers and numbers of weights in each layer can be different. 
%%We use the ReLu activation function for these neural networks. 
Specifically, each of these networks is a composite function of the form
$
T_{L+1}\circ T_L\circ \cdots \circ T_1
$
with $L$ being the number of hidden layers of the network,
where all $T_j$ $(1 \le j \le L+1)$ are vector-valued functions with the form
\[
T_j(x) = a_j ((A_j x + b_j)) 
\quad \mbox{if } 
 1 \le j \le L
\quad \mbox{ and } \quad 
 T_{L+1} (x) = A_{L+1} x + b_{L+1},
 \]
with each $a_j$ an activation function, $A_j$ a matrix, and $b_j$ a vector. 
We use the ReLu activation function for our networks $\Phi_{\rm PNN}$
and $\Phi_{\rm SNN}$, i.e., each $a_j$ $(1 \le j \le L)$ is the ReLu function 
$a(x) = \max \{ 0, x\}$ for any $x \in \R$ and $a(x) \in \R^d$ with 
components $\max\{0, x_i\}$ $(i= 1, \dots, d)$ 
if $x = (x_1, \dots, x_d)\in \R^d$. 
%%For the last layer, we do not use an activation function as usual, 
The network parameters consist of all the entries of $A_j$ and $b_j$ for 
all $j = 1, \dots, L+1.$
The number of hidden layers $L$ and the weights (i.e., the entries of 
$A_j$ and $b_j$ for all $j$) for the parameter network $\Phi_{\rm PNN}$
can be different from those for the solution network $\Phi_{\rm SNN}.$
%%\XT{We may add a sentence emphasizing that the parameter and solution networks may have different $L$'s.}
%%\BL{added.}

%%\XT{Put more details of these two subnets.}\BL{I agree.}
%%\BL{I added some details, please check.}

We define $\hat{\Phi}(\cdot, \cdot, {\omega}): D\times \Omega \to \R$ by 
\[
\hat{\Phi}_{\rm PSNN} (U, \Theta, \omega)= 
\Phi_{\rm PNN} (\Theta, \omega_{\rm P}) \cdot \Phi_{\rm SNN}(\Theta, \omega_{\rm S})
\qquad \forall (U, \Theta) \in D \times \Om,
\]
where ${\omega} = \{ \omega_{\rm P},\omega_{\rm S}\}$ and 
the dot denotes the dot product of vectors in $\R^N$.
In addition, we define a scaled sigmoid function $\sigma: \R \to (-\eta, 1+\eta)$ 
\beq
\label{eq:scaledsigmoid}
\sigma(t) = \frac{e^t}{e^t + 1} + \eta \, \frac{e^t - 1}{e^t + 1}\qquad \forall t \in \R ,
\eeq
where $\eta>0$ is a small number that satisfies 
$
1+\eta > \sup_{U \in \Om, \Theta \in D}\Phi(U, \Theta)$. 
%%\BL{I combined two "sup"s into one.}
We finally define our PSNN $\Phi_{\text{PSNN}}( \cdot, \cdot, \omega): D\times \Omega
\to \R$ by 
%%\BL{Do we still have the last output layer - an affine function of the inner 
%%product, i.e., the $T$ function in Introduction?} 
%%\YZ{We're not using T now. And I have removed it.}
%and $\hat{\Phi}_{\text{PSNN}}( \cdot, \cdot, \omega)$ by 
%%$D \times \Om \to (0, 1)$ by
\begin{equation}
\label{eq:Phi_PSNN}
\Phi_{\text{PSNN}}= \sigma (\hat{\Phi}_{\text{PSNN}})
= \sigma( \Phi_{\text{PNN}} \cdot \Phi_{\text{SNN}} ).
\end{equation}
%where the symbol $\circ$ denotes the function %composition, 
%%where the dot denotes the inner product of two vectors in $\R^N$, 
%%and $\omega$ is the set of neural network parameters. 
%given by $\omega  = \omega_{\text{P}} \cup %\omega_{\text{S}} \cup \{ \alpha_{\text{PS}},
%\beta_{\text{PS}} \}.$ 
%Explicitly, we have 
%\[
%\Phi_{\text{PSNN}}(U, \Theta, \omega) 
%= S ( \alpha_{\text{PS}} ( \Phi_{\text{PNN}} (U, %\omega_{\text{P}}) \cdot
%\Phi_{\text{SNN}} (\Theta, \omega_{\text{S}}) ) %+\beta_{\text{PS}})
%\qquad \forall U \times \Theta \in D \times \Om. 
%\]
The structure of our PSNN is depicted in Figure~\ref{fig:NN}. 

Similarly, we construct the neural network $\Phi^{\rm s}_{\rm PSNN}$ 
to approximate the stability target function $\Phi^{\rm s}$. The network
$\Phi_{\rm PSNN}^{\rm s}$ is the inner product of two vector-valued
subnetworks $\Phi_{\rm PNN}^{\rm s}$ and $\Phi_{\rm SNN}^{\rm s}$ with the same 
output dimension, 
%%\BL{changed "dimensions" to "dimension."}
similar to the subnetworks $\Phi_{\rm PNN}$ and $\Phi_{\rm SNN}$, respectively. 
%However, since the stability target function $\Phi^{\rm s}$ is not scaled to be 
%in $[0, 1]$, we do not apply the sigmoid function to the inner product of the two sub networks.
Finally, $\Phi_{\rm PSNN}^{\rm s} = \Phi_{\rm PNN}^{\rm s} \cdot \Phi_{\rm SNN}^{\rm s}$.
%%
%%\BL{complete this.}

%%%%%%%%%%%%%%%%%%%%%%
%%Note that in our network architecture, the parameter-network and the solution-network 
%%output vectors with the same length, but they do not share the same structure or weights.

%%We denote by $\Phi_{\text{PSNN}}(U, \Theta; \omega)$ a PSNN approximation
%%of the target function $\Phi(U, \Theta)$, where $\om$ denotes the set of neural network parameters. 
%%\XT{Define $\Phi_{\text{PSNN}}$ as the inner product of two functions.} 

%%\BL{Need to describe some differences between the solution net and parameter net.
%%Otherwise, why call one solution and one parameter net. In particular, how the
%%regularities of the target function in $U$ and in $\Theta$ affect these nets?}
%%%%%%%%%%%%%%%%%%%%%%%%%%%%%%%%%%%%%%%%%%%%%%%%%%%%%%%%%%%%%%%%%
\subsection{Training the parameter-solution neural network}
\label{subsec:TrainingPSNN}

Assume that we are given a complete set of observation data
%\XT{Do we want to make a remark on partial data in this section? May just refer to Section 4.4.}
%%(e.g., collected from experiment in application) 
\begin{equation}
    \label{Tobserv}
\mathcal{O} = 
\left\{ O_i := \left(\Theta_i,  \{ \hat{U}_j^{\Theta_i}\}_{j=1}^{\cN_{m_i}}\right) \right\}_{i=1}^{N_{\rm observ}},
\end{equation}
where all $\Theta_1, \dots, \Theta_{N_{\rm observ}}$ 
    are distinct vectors in $\bigcup_{k=0}^M \Omega_k$, and 
    $m_i \in \{ 0, 1, \cdots, M\}$ if $i \in \{ 1, \dots, N_{\rm observ} \}$ and different labels $i$ may have the same $m_i$. 
    Moreover, 
\begin{itemize}
    \item[$\bullet$] 
    If $m_i = 0$, then $\cN_{0} = 0$ and 
    $\{ \hat{U}_j^{\Theta_i}\}_{j=1}^{0}$
    is understood as the empty set; and 
   \item[$\bullet$]
    %%If $i \in \{ 1, \dots, N_{\rm observ} \}$ and  
    %%\BL{use "If" instead of "if"?} 
    If $m_i > 0,$ then 
    $ \{ \hat{U}_1^{\Theta_i}, \dots, \hat{U}_{\cN_{m_i}}^{\Theta_i} \} =  S^{\Theta_i}$ 
    is the complete solution set corresponding to the given $\Theta;$
    cf.\ \eqref{eq:SThetai} for the notation $S^{\Theta_i}.$
    %%are solutions corresponding  to $\Theta_i$, and these solutions may not be all distinct. 
    %Moreover, $\{ \hat{U}_1^{\Theta_i}, \dots, \hat{U}_{n_i}^{\Theta_i} \} = S^{\Theta_i}.$
\end{itemize}
We shall divide the observation data set into three disjoint parts: $\cO_{\rm train}=\{ O_i\}_{i\in I_{\rm train}}$ for training,
$\cO_{\rm search}=\{ O_i\}_{i\in I_{\rm search}}$ for searching or validation, and $\cO_{\rm test}=\{ O_i\}_{i\in I_{\rm test}}$ for testing. The index sets $I_{\rm train}, $ $I_{\rm search}$, and $I_{\rm test}$ 
form a partition of the index set
$\{ 1, \dots, N_{\rm observ}\}$ of $\cO.$ 

To train our neural networks, 
%%we use part of the observation data. For simplicity of notation, 
%%we still denote that part of the data by $\mathcal{T}_{\rm observ} $ 
%%as in \eqref{Tobserv}. In addition, 
we  generate $N_{\rm random}$ points 
$\{ U_1, \dots, U_{N_{\rm random}} \}$ in the solution space $D$ and define  
%For each $\Theta_i$ $(1 \le i \le N_{\rm observ})$, we generate 
%$N_{\rm random}$ points $U_j^{i} $ $(j = 1, \dots, N_{\rm random})$ in $D$ following a uniform distribution, where $N_{\rm random}$ is a pre-chosen
%positive integer, independent of $i$. 
the training data set to be 
%\[
%\cU_i = \{ \hat{U}_1^{\Theta_i}, \dots, \hat{U}_{\cN_{m_i}}^{\Theta_i}; U_i^1, \dots, U_i^S \}.
%\]
\begin{equation}
    \label{Ttrain}
\mathcal{T_{\rm train}} = \left\{ \left( \Theta_i, \hat{U}_j^{\Theta_i}, 
\Phi(\hat{U}_j^{\Theta_i}, \Theta_i) \right)_{j=1}^{\cN_{m_i}}; 
\left( \Theta_i, U_j, \Phi(U_j, \Theta_i) \right)_{j=1}^{N_{\rm random}}
\right\}_{i \in I_{\rm train}}. 
%%{i=1}^{N_{\rm observ}}.
\end{equation}
We define the loss function
\begin{align}
\label{eq:loss}
\cL(\om) &=  \frac{1}{|\mathcal{T_{\rm train}}|}  \sum_{i \in I_{\rm train}}
%%\sum_{i=1}^{N_{\rm observ}}  
\left[ \sum_{j=1}^{\cN_{m_i}}
\left( \Phi_{\rm PSNN}(\hat{U}_j^{\Theta_i}, \Theta_i, \omega )
- \Phi (\hat{U}_j^{\Theta_i}, \Theta_i) \right)^2 
\right. 
\nonumber \\
&\qquad 
\left.  
+ \sum_{j=1}^{N_{\rm random}} \left(  \Phi_{\rm PSNN}( U_j, \Theta_i, \omega )
- \Phi (U_j, \Theta_i) \right)^2 \right], 
%frac{1}{|\cT|} \sum_{i=1}^P\sum_{j=1}^{S+\cN_{m_i}}(\Phi( U_j, \Theta_i) - \Phi_{\text{PSNN}}(U_j^i, %\Theta_i; \om))^2, 
\end{align}
%%\BL{remove this sentence in blue.}
%%{\textcolor{blue}{where $|A|$ denotes the number of elements of a set $A$, and in particular,}}
where $|\mathcal{T_{\rm train}}|= \sum_{i\in I_{\rm train}}(\cN_{m_i}+ N_{\rm random})$.
Furthermore,  the test data set $\cT_{\rm test}$ is generated in a similar fashion, utilizing $\cO_{\rm test}$ extracted from the observation set $\cO$.

\begin{remark}
%%(1)  In our implementation, we often only use part of the observation data for training. However, for simplicity of notation, here we use the same observation data set. 
%%\XT{I added some sentences here.}
%%\BL{They look good.}
In our setting, we assume that the observation data set is complete, meaning that the data set contains the entire set of solutions $\{\hat{U}_j^{\Theta_i}\}_{j=1}^{\cN_{m_i}}$ for a given parameter $\Theta_i$. 
This is, however, a restrictive assumption in practice since missing observations can occur in reality. 
    To assess the applicability of our approach in different scenarios, we also conducted tests with incomplete observations; cf.\ section~\ref{subsec: incplt data}.
\end{remark}
We train the PSNN $\Phi_{\rm PSNN}$  by minimizing the loss function
\eqref{eq:loss} using the stochastic
optimization method ADAM \cite{kingma2014adam}, and our training process
is summarized in Algorithm~\ref{alg:training}. 

%\BL{We need to add some descriptions of the implemention - e.g., which software used? what is 
%the optimization algorithm Adam, etc. Also, this algorithm is quite simple, I am not sure if 
%we need to write it down here.}

\begin{algorithm}[htpb]
\caption{The training of PSNN}
\label{alg:training}
\SetKwInput{Input}{Input}
\SetKwInput{Output}{Output}
\SetKwInput{Setup}{Set up}
\SetKwInput{Init}{Initialize}
\Input{The training data set $\mathcal{T}_{\rm train}$, the number of 
layers and number of weights in each layer of the parameter-network
$\Phi_{\rm PNN}$ and the solution-network $\Phi_{\rm SNN},$
the dimension of output vectors $N$.}
%%$\mathcal{T_{\rm train}}=\left\{(\Theta_i,U^i_j, \Phi(U_j^i, \Theta_i)), j=1,\cdots, %%S+\cN_{m_i}\right\}_{i=1}^P$}
%\Setup{Structures of the parameter-network $\Phi_{\rm PNN}$ and solution-network
%$\Phi_{\rm SNN} $}
%\Setup{the dimension of the output vector}
\Init{PSNN weights $\omega_0$} 
\For{$k=1,\dots, K_{\rm epoch}$}{
   initialize the weights of PSNN as $\omega_{k-1}$
   %%\;
   
   $\omega_k \gets \argmin\, \cL(\omega)$ by ADAM
}
\Output{PSNN weights $\omega_{K_{\rm epoch}}$}
\end{algorithm}

\vspace{-2 mm}

\begin{remark}
For learning the stability of solutions, we use the target function 
for stability $\Phi^{\rm s}$ defined in \eqref{eq:Phis}. The training data set and the loss 
function for training the PSNN for stability, $\Phi^{\rm s}_{\rm PSNN}$, 
can be constructed similarly, and the training process is also similar.
%%by replacing $\Phi$ with $\Phi^s$. 
%For the PSNN structure, we remove the sigmoid function shown in \Cref{fig:NN}. 
%We shall denote the PSNN approximation of $\Phi^s$ by $\Phi^s_{\text{PSNN}}$.
\end{remark}

%%%%%%%%%%%%%%%%%%%%%%%%%%%%%%%%%%%%%%%%%%%%%%%%%%%%%
\subsection{Locating solutions}
\label{subsec:solutions}
Utilizing the network $\Phi_{\text{PSNN}}$, 
we now present a post-processing method to
determine for any given parameter vector $\Theta \in \Omega$ if 
there exists a solution $U \in D$ corresponding to $\Theta$, and in case so, 
to locate all the solutions corresponding to  $\Theta$.

By the distinguished properties of the 
target function $\Phi: D \times \Omega \to \R$ defined in \eqref{eq:Phi},
%%cf.\ section~\ref{subsec:TargetFunctions},
once $\Theta\in \bigcup_{i=0}^M \Omega_i$ is fixed, the peaks of the graph of the function 
$U \mapsto \Phi(U, \Theta)$ defined on the solution space $D$ correspond to the multiple
solutions in $S^\Theta$, the solution set defined in \eqref{eq:SThetai}.
If there are no peaks, then there are no solutions corresponding to $\Theta.$
%%This amounts to finding the peaks of $\Phi_{\text{PSNN}}(U, \Theta; \om)$ 
%%as a function of $U$ for a given parameter $\Theta$. 
Therefore, for a given $\Theta,$ we can proceed with the following few steps to locate 
the corresponding solutions, which are also called centers since our target function 
$\Phi$ is a sum of Gaussian radial basis functions: 
%%Our method consists of the following main steps: 
%%The main steps of the our algorithm are summarized as follows: 
\begin{itemize}
%\item[{\it Step 0.}]
%Fix $\Theta.$
\item[{\it Step 1.}] 
Choose a finite set of points $\mathcal{U} \subset D$ that are uniformly scattered
in $D$ (e.g., finite-difference
grid points) and calculate the PSNN values $\Phi_{\text{PSNN}}(U, \Theta, \omega)$ 
%%(These values approximate those values of the target function $\Phi(U, \Theta)$ 
for all $U \in \mathcal{U}$.
%\item[{Step 2.}]
%Use the validation data set to select an optimal cut value $L_{cut} \in (0,1)$. 
%Determine a cut value $L_{cut} \in (0,1)$. 
\item[{\it Step 2.}] 
Choose a number $L_{\rm cut}\in (0, 1)$, call it a threshold value or a cut value.
%%\BL{How about: a threshold value or just a threshold?}
%Get the output labels $\{ y_i = \Phi_{\text{PSNN}}(U_i, \Theta; \om)  \}_{i=1}^{\cN_U}$, where $\Theta\in %\Om$ is given and  $\{U_i\}_{i=1}^{\cN_U}$ is a uniformly scattered grid on $D$.
%%\item[{Step 3.}] 
Collect all the points $U \in \mathcal{U}$ such that $\Phi_{\rm PSNN} (U, \Theta, \omega) \ge L_{\rm cut}.$
%%$U_i$ for which the output label $y_i$ is greater than $L_{cut}$. 
Denote by $\cU^\Theta_{\rm collected}$ the set of such points. 
\item[{\it Step 3.}] 
Apply the $K$-means clustering method, 
%%\BL{a ref on the K-means clustering?}
with a pre-chosen maximum number of clusters $C_{\rm max}$ and a pre-chosen
silhouette scoring number ${\rm sil} \in (0, 1),$
on the set of collected points $\cU^\Theta_{\rm collected}$
%%Implement $K$-means clustering on the collected points 
to locate the centers, i.e., approximate solutions, corresponding to the given $\Theta.$
%%to \cref{eq:main} for the given $\Theta$.
We denote by $\cU^\Theta$ the set of such centers.
\end{itemize}

It is essential to select a cut value $L_{\rm cut}$ to ensure the effectiveness of our algorithm. The ideal cut value can also vary with problems and the structures of the PSNN. 
Algorithm~\ref{alg:cutvalue} below details our method of finding such an optimal value. 
In this algorithm, we implement the PSNN and K-means clustering algorithms and  
%So, before describing our algorithm for locating solutions, we present the algorithm \ref{alg:cutvalue}
%for finding an optimal value of $L_{\rm cut}$, using the searching or validation data set
%%Recall the original observation data $\mathcal{T}_{\rm observ}$ is split to there uses, and we have
%\begin{equation}
%\label{Tsearch}
%\mathcal{T}_{\rm search} = \left\{ \Theta_i, \{ \hat{U}_j^{\Theta_i} \}_{j=1}^{\cN_{m_i}} 
%\right\}_{i \in I_{\rm search}} \subseteq \mathcal{T}_{\rm observ},
%\end{equation}
%%The idea of finding an optimal $L_{\rm cut}$ is to implement the PSNN and K-means clustering algorithm on the validation data set $\mathcal{T}_{\rm search}$, 
find an approximate optimal cut value that
minimizes the validation errors on $\cO_{\rm search}$. 
%%to obtain an optimal value of $L_{\rm cut}$
%%$ \in [\tilde{L}_{\rm min}, \tilde{L}_{\rm max}]$ 
%%by the bisection method. 
%%in Algorithm\ref{alg:cutvalue} is for all parameters in validation/test set, 
%%which is different from the average distance in \cref{tab:error_table}. 
Notice that in this algorithm,  $\mbox{dist} \left(\mathcal{U}^{\Theta_i}, S^{\Theta_i}\right)$ refers to relative distance between two sets that contains an equal number of elements. In our numerical experiments, we take 
    \begin{equation}
    \label{def:relativedist}
        \mbox{dist} \left(\mathcal{U}^{\Theta_i}, S^{\Theta_i}\right):= \min_{P\in \mathcal{P}} \frac{1}{\mathcal{N}_{m_i}}      \sum_{j=1}^{\mathcal{N}_{m_i}}
        \frac{\|U^{\Theta_i}_{P(j)}-\hat{U}_j^{\Theta_i}\|_2}{\mbox{diam}(D)},
    \end{equation}
    where $\mathcal{U}^{\Theta_i}= \{U_1^{\Theta_i}, \dots, U_{\cN_{m_i}}^{\Theta_i} \}, 
    S^{\Theta_i}=\{ \hat{U}_1^{\Theta_i}, \dots, \hat{U}_{\cN_{m_i}}^{\Theta_i} \}$
    (cf.\ \eqref{eq:SThetai}), 
    %%\BL{added "cf.\ ..."}
   $\mbox{diam}(D)$ denotes the diameter of $D$, and $\mathcal{P}$ represents the set of all perturbations on $\{1,\dots,\cN_{m_i}\}$. 
    Alternative metrics, such as the Wasserstein distance, 
    %%\BL{removed "s" from distances.} 
    can also be considered.

\begin{algorithm}[htbp]
\caption{Finding the optimal cut value in $K$-means clustering}
\label{alg:cutvalue}
\SetKwFunction{FCluster}{Cluster}
\SetKwFunction{FAverage}{Averageerror}
\SetKwProg{Fn}{Function}{}{}

\SetKwInput{Input}{Input}
\SetKwInput{Output}{Output}
\Input{A lower bound $\tilde{L}_{\min}\in [0, 1]$, an upper bound $\tilde{L}_{\max} \in [0,1]$, 
the set of neural network parameters $\om$ of 
the trained PSNN $\Phi_{\rm PSNN}$, an observation data set $\cO_{\rm search}$ for searching or validation, a set of points $\mathcal{U} \subset D$, a silhouette scoring number ${{\rm sil}_1} \in (0, 1)$.}
\Output{An optimal cut value $L_{\rm cut}$. }
\Fn{\FCluster{$\cU$}}{
\While{$2\leq j \leq C_{\max}$}{
perform $K$-means with $j$ clusters on $\mathcal{U}$ and get an average 
%%silhouette score 
score $\rm{sil}_j$ 
%%\;
}
$k \gets \underset{2\leq j\leq C_{\max}}{\arg\max}\{\rm{sil}_j\}$ 
%%\;

\eIf{$\rm{sil}_{k}\geq {\rm{sil}_1}$}{
perform $K$-means with $k$ clusters on $\mathcal{U}$ and get the set of 
centers $\mathcal{U}^\Theta$ 
%%\;
}{take the  mean on $\mathcal{U}$ and get the set of centers
$\mathcal{U}^\Theta$ }
%%\;}
\KwRet $\mathcal{U}_\Theta$ 
%%\;
}
\Fn{\FAverage{$L$}}
{
$\cE = 0$ 
%% \;
%%%\For{$(\Theta_i, \{\hat{U}^{\Theta_i}_j\}_{j=1}^{\cN_{m_i}} ) \in \mathcal{O}_v$}{

\For{$i = \min I_{\rm search}, \dots, \max I_{\rm search}$}
{
  Get $\cU^{\Theta_i}_{\rm collected}=\{ U \in \mathcal{U}: \Phi_{\text{PSNN}}(U, \Theta_i; \om)\geq L\}$ 
%%\; 

Get $\cU^{\Theta_i} = $\FCluster{ $\cU^{\Theta_i}_{\rm collected}$}
%%Get $\cU^{\Theta_i} = \FCluster{ $\cU^{\Theta_i}_{\rm collected}$}$
%% \; 
  
   %%\;
%%\eIf{$| \mathcal{U}^i_c| =\cN_{m_i}$}{
\eIf{$| \mathcal{U}^{\Theta_i}| =\cN_{m_i}$}
{
%% $\cE \gets \cE + $ distance between the two sets $\mathcal{U}^i_c$ and 
%% $ \{\hat{U}^{\Theta_i}_j\}_{j=1}^{\cN_{m_i}} $ 
%% $\cE \gets \cE + \mbox{dist} \left(\mathcal{U}^i_{\rm c}, 
%% \{\hat{U}^{\Theta_i}_j\}_{j=1}^{\cN_{m_i}} \right)$ 
 $\cE \gets \cE + \mbox{dist} \left(\mathcal{U}^{\Theta_i}, S^{\Theta_i}\right)$ 
    %%\; 
}
{$\cE \gets \cE + 1$ }
%%\;
}
%%}
%% \KwRet $\cE / |\cO_v |$ 
  \KwRet $\cE / | \mathcal{O}_{\rm search} | $ 
  %%\;
}
Take a few sample points from the interval $[\tilde{L}_{\rm min}, \tilde{L}_{\rm max}]$ and use the 
function {\FAverage} to get an approximate optimal cut value $L_{\rm cut}$.
\end{algorithm}

At last, the algorithm for locating solutions with a specified parameter vector is presented in Algorithm~\ref{alg:cluster}, which utilizes the 
trained neural network $\Phi_{\rm PSNN} $ and the $K$-means clustering with a given cut value. 
%%BL{In the alg, I changed a threshold value to an optimal threshold value. Please check it.} 
%%\XT{Looks good.}

\begin{algorithm}[htbp]
%%[htpb]
\caption{Locating solutions with PSNN and $K$-means clustering}
\label{alg:cluster}

\SetKwFunction{FCluster}{Cluster}
\SetKwProg{Fn}{Function}{}{}
\SetKwInput{Input}{Input}
\SetKwInput{Output}{Output}
\Input{A parameter vector $\Theta \in \Omega$, an optimal threshold value $L_{\rm cut}\in (0, 1)$, 
a set of points $\mathcal{U} \subset D$,
%%$= \{ U_i \}_{i=1}^{N_U} \subset D$,
the set of neural network parameters $\omega$ of 
the trained PSNN $\Phi_{\rm PSNN}$, a maximum number of clusters $C_{\rm max}$, and 
a silhouette scoring number ${{\rm sil}_1} \in (0, 1)$.}
%cut value $L_{cut}\in(0,1)$, parameter $\Theta\in \Om$, uniformly scattered points 
%%$\{U_i\}_{i=1}^{N_U}\subset D$, trained network $\Phi_{\text{PSNN}}$, 
%%maximum number of clusters $C_{\max}$, $sil_1\in (0,1)$ \;}
%%Output{the set of centers $\cU_c\subset D$ as approximate solutions \;}
\Output{A set of centers $\cU^\Theta$.}

%%Let $\cU_{\rm collected}=\{ U_i : 1 \le i \le N_U \text{ and }  \Phi_{\text{PSNN}}(U_i, \Theta; \om)\geq L_{\rm cut}\}$ 
Get $\cU^\Theta_{\rm collected}=\{ U \in \mathcal{U}: \Phi_{\text{PSNN}}(U, \Theta; \om)\geq L_{\rm cut}\}$ 
%%\; 

Get $\cU^\Theta = $ \FCluster{$\cU^\Theta_{\rm collected}$}
%%Get $\cU^\Theta = \FCluster{$\cU^\Theta_{\rm collected}$}$
%% \; 
\end{algorithm}

%%%%%%%%%%%%%%%%%%%%%%%%%%%%%%%%%%%%%%%%%%%%%%%%%%%%%
\begin{comment}
\begin{algorithm}[htbp]
\caption{Finding the optimal cut value in $K$-means clustering}
\label{alg:cutvalue}
\SetKwFunction{FAverage}{Averageerror}
\SetKwProg{Fn}{Function}{}{}

\SetKwInput{Input}{Input}
\SetKwInput{Output}{Output}
\Input{A lower bound $\tilde{L}_{\min}\in (0, 1)$, an upper bound $\tilde{L}_{\max} \in (0,1)$, 
the set of neural network parameters $\om$ of 
the trained PSNN $\Phi_{\rm PSNN}$, a searching data set $\mathcal{T}_{\rm search}$
as given in \eqref{Tsearch}, a set of points $\mathcal{U} \subset D$, a silhouette scoring number ${{\rm sil}_1} \in (0, 1)$.}
\Output{An optimal cut value $L_{\rm cut}$. }
\Fn{\FAverage{$L$}}
{
$\cE = 0$ 
%% \;
%%%\For{$(\Theta_i, \{\hat{U}^{\Theta_i}_j\}_{j=1}^{\cN_{m_i}} ) \in \mathcal{O}_v$}{

\For{$i = \min I_{\rm search}, \dots, \max I_{\rm search}$}
{
  Get $\cU^{\Theta_i}_{\rm collected}=\{ U \in \mathcal{U}: \Phi_{\text{PSNN}}(U, \Theta_i; \om)\geq L\}$ 
%%\; 

Get $\cU^{\Theta_i} = $\FCluster{ $\cU^{\Theta_i}_{\rm collected}$}
%%Get $\cU^{\Theta_i} = \FCluster{ $\cU^{\Theta_i}_{\rm collected}$}$
%% \; 
  
   %%\;
%%\eIf{$| \mathcal{U}^i_c| =\cN_{m_i}$}{
\eIf{$| \mathcal{U}^{\Theta_i}| =\cN_{m_i}$}
{
%% $\cE \gets \cE + $ distance between the two sets $\mathcal{U}^i_c$ and 
%% $ \{\hat{U}^{\Theta_i}_j\}_{j=1}^{\cN_{m_i}} $ 
%% $\cE \gets \cE + \mbox{dist} \left(\mathcal{U}^i_{\rm c}, 
%% \{\hat{U}^{\Theta_i}_j\}_{j=1}^{\cN_{m_i}} \right)$ 
 $\cE \gets \cE + \mbox{dist} \left(\mathcal{U}^{\Theta_i}, S^{\Theta_i}\right)$ 
    %%\; 
}
{$\cE \gets \cE + 1$ }
%%\;
}
%%}
%% \KwRet $\cE / |\cO_v |$ 
  \KwRet $\cE / | \mathcal{T}_{\rm search} | $ 
  %%\;
}
Perform the bisection method on $[\tilde{L}_{\rm min}, \tilde{L}_{\rm max}]$ using the 
function {\FAverage} to get an optimal cut value $L_{\rm cut}$.
\end{algorithm}
\end{comment}
%%%%%%%%%%%%%%%%%%%%%%%%%%%%%%%%

\medskip

%%%%%%%%%%%%%
\begin{comment}
%%\BL{Where and how do we use the test data set? }
Our method for locating the solutions relies on a good approximation of
the target function $\Phi$ by the PSNN
$\Phi_{\text{PSNN}}$, as indicated by our 
analysis of such approximations presented in  \Cref{sec:Analysis} 
and our numerical experiments of convergence given in
\Cref{subsec: Convergence Test}. 
\end{comment}
%%%%%%%%%%%%%%%%%%%

To demonstrate the key role of the neural network approximation for accurately locating 
the solutions, we shall present a naive mean-shift-based algorithm and compare it with our
PSNN-based algorithm with numerical results found in
\Cref{subsec: phasediag,subsec: incplt data}.
This mean-shift-based algorithm we use for comparison
is summarized in Algorithm \ref{alg:meanshift} in Appendix. 
%%in \Cref{appendix}. 

\begin{remark}
To determine the stability of learned solutions, we use our trained PSNN 
for stability, $\Phi^{\rm s}_{\rm PSNN}$. 
%%learned from data with stability information included. 
With input of the learned parameter-solution pair into the PSNN for stability
$\Phi^{\rm s}_{\text{PSNN}}$, we label the solution as stable if the output is closer to $1$ 
and unstable if the output is closer to $-1$. The solution stability is 
simply determined by the sign of this out put label. 
\end{remark}

%%%%%%%%%%%%%%%%Section 3%%%%%%%%%%%%%%%%%%%%%%%

\section{Approximation theory for the parameter-solution neural network}
\label{sec:Analysis}

%%\BL{Would it make sense that we first state the two main theorems and then prove them?}

%%%%We aim to develop an approximation theory for a class of functions,  including
%%%our target functions $\Phi$ for solution and $\Phi^{\rm s}$ for stability, by  
%%%our parameter-solution neural networks as defined in \Cref{subsec:PSNN}.
%% For convenience of presentation, we shall however denote $ y = U$ and $x = \Theta.$
We shall still assume that both 
$D \subset \R^n$ and $\Omega\subset \R^m$ ($n,m \geq 2$) are bounded and open subsets. Moreover, 
    $\overline{\Omega} = \bigcup_{i=0}^M \overline{\Omega_i}$, where $M \ge 1$ is an integer
    and $\Omega_0, \dots, \Omega_M$ are distinct open subsets of $\Omega$.
%%We consider 
For a target function $\Phi \in L^2(D \times \Omega),$ 
%%More assumptions will be stated later. 
%%For convenience of presentation, 
we shall denote the input variable of the function 
$\Phi$ by $(y, x)$ instead of $(U, \Theta)$. 
For any integer $k \ge 0$ and any $\sigma \in (0, 1]$, we denote by 
 $C^{0, \eta} $ with $\eta = k + \sigma$ the usual H{\"o}lder space 
$C^{k,\sigma}$.
%%%if $\eta  = k + \sigma \in (0, \infty)$. 
%%%%for some integer $k \ge 0$ and some $\sigma \in (0, 1]$. 
%%for $\eta\in (0,\infty)$ with $\eta = k +\sigma$ where $n\in \Z^+\cup\{0\}$ and $\sigma\in(0,1]$, we %%denote $C^{0,\eta}$ to be the usual H\"older space $C^{k,\sigma}$. 
%%Our main result in this section is the following universal approximation theorem.
%We shall use $C, C_1, C_2 $ et al.\ or $c, c_1, c_2, $ et al.\ to denote 
%generic positive constants. 
We shall write $C = C(\alpha, \beta, \cdots)$ or $c = c(\alpha, \beta, \cdots)$
to indicate that the quantity (e.g., a constant) $C$ or $c$ depends on other quantities
$\alpha$, $\beta,$ et al. 
%%and $\Phi = \Phi(y, x):D\times \Omega \to \R$. 

Our main result of analysis is the following universal approximation theorem: 
%\BL{more assumptions on $D$ and $\Omega$?}
%%\XT{Assumptions are added. The hyperrectangle assumption is made due to the fact that the %%approximation theory in  \cite{petersen2018optimal} is proved for functions 
%%defined on hypercubes. For $\Om$, I did not assume it is a hyperrectangle, since 
%% a piecewise $C^{0,\beta}$ function defined on $\Om$ can be trivially 
%%extended to be defined on a hyperrectangle, which also satisfies 
%%the assumptions in \cite{petersen2018optimal}.}

\begin{theorem}
\label{thm:main_approximation}
Let $\Om\subset\R^m$ be an open and bounded set and $D\subset \R^n$ be a hyperrectangle. 
Assume $\Phi \in L^2(D\times \Omega)$ satisfies that $\Phi (\cdot, x) \in C^{0, \alpha} 
(\overline{D})$ for each $x \in \Omega$ and $\Phi(y, \cdot) \in C^{0, \beta}(\overline{\Omega_i})$
%%for each $i$ 
%%$(0 \le i \le M)$ 
$(i = 0, \dots, M)$ for each $y \in D$ for some $\alpha > 0$ with 
$\lfloor \alpha \rfloor > n$ and $\beta > 0$. 
%%\BL{Assumptions on $\alpha$ and $\beta$? $\lfloor \alpha \rfloor > n$?} 
Assume also that each subset 
$\Omega_i $ $(0 \le i \le M)$ is $C^{0, \beta'}$ for some $\beta' \geq 2 \beta (m-1)/m$ with $m\geq 2$.
Then, for any $\epsilon \in (0, 1/2)$, there exists a ReLU parameter-solution neural network 
$\Phi_{{\rm PSNN}}$ defined by \eqref{eq:Phi_PSNN},  such that
\begin{equation*}
        \| \Phi - \Phi_{\text{\rm PSNN}}\|_{L^2(D\times \Omega)} < \epsilon.
\end{equation*}
More specifically, the above inequality can be attained with a parameter network 
$\Phi_{{\rm PNN}}: \Omega \to \R ^N$
with at most 
$L_1 = L_1(n, \beta)\ge 1$ layers and at most $c_1\ep^{-\frac{n(2\alpha+\lfloor\alpha\rfloor+n)}{\alpha(\lfloor\alpha\rfloor-n)}}$ nonzero weights  and a solution network $\Phi_{{\rm SNN}}: D \to \R ^N$ with at most 
$L_2 =L_2( m, \beta)\ge 1$ layers and at most $c_2 \ep^{-\frac{n(2\beta+\lfloor\alpha\rfloor+n)}{\beta(\lfloor\alpha\rfloor-n)}}$ nonzero weights for some constants $c_1 = c_1 (\alpha, n,\Phi)>0$ and 
$c_2 = c_2(\beta, m, \Phi)>0$, and an integer 
$N = O(\epsilon^{-\frac{2n}{\lfloor \alpha \rfloor-n}})$.
 In addition, all 
 the constants $L_1$, $L_2$, $c_1$, and $c_2$ may depend on the diameters of $D$ and $\Omega$. 
\end{theorem}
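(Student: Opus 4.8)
The plan is to approximate not $\Phi$ directly but its pre-image under the scaled sigmoid, and to build the bilinear part $\widehat\Phi_{\rm PSNN}=\Phi_{\rm PNN}\cdot\Phi_{\rm SNN}$ from a truncated Mercer-type decomposition of that pre-image. Since $\Phi$ takes values in the compact subinterval $[0,\sup\Phi]\subset(-\eta,1+\eta)$ and $\sigma$ is a global diffeomorphism of $\R$ onto $(-\eta,1+\eta)$ with $\sup_{\R}|\sigma'|\le(1+2\eta)/4$, it suffices to produce a bilinear ReLU network $\widehat\Phi_{\rm PSNN}$ with $\|\widehat\Phi_{\rm PSNN}-\Psi\|_{L^2(D\times\Omega)}<c\,\epsilon$, where $\Psi:=\sigma^{-1}\circ\Phi$; composing with $\sigma$ and using its Lipschitz bound then gives the theorem. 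The function $\Psi$ inherits the regularity of $\Phi$: composition with the analytic map $\sigma^{-1}$ (whose derivatives are bounded on the range of $\Phi$) preserves Hölder classes, so $\Psi(\cdot,x)\in C^{0,\alpha}(\overline D)$ uniformly in $x$ and $\Psi(y,\cdot)\in C^{0,\beta}(\overline{\Omega_i})$ for each $i$, with norms controlled by those of $\Phi$.

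First I would introduce the kernel operator $K$ on $L^2(D)$ with kernel $k(y,y')=\int_\Omega\Psi(y,x)\Psi(y',x)\,dx$, which is compact, self-adjoint and positive semidefinite; its spectral decomposition yields orthonormal eigenfunctions $\{\psi_j\}\subset L^2(D)$ with eigenvalues $\lambda_1\ge\lambda_2\ge\cdots\ge0$, and setting $\phi_j:=\lambda_j^{-1/2}\int_D\Psi(\cdot,x)\psi_j(y)\,dy$ gives an orthonormal system in $L^2(\Omega)$ with $\Psi=\sum_{j\ge1}\sqrt{\lambda_j}\,\psi_j\otimes\phi_j$ in $L^2(D\times\Omega)$ and $\sum_j\lambda_j=\|\Psi\|_{L^2(D\times\Omega)}^2$. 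The crucial quantitative input is the eigenvalue decay. Because $\Psi(\cdot,x)\in C^{0,\alpha}(\overline D)$ uniformly in $x$, the kernel $k(\cdot,y')$ lies in $C^{0,\alpha}(\overline D)$ uniformly in $y'$, so $K$ factors as $L^2(D)\to C^{0,\alpha}(\overline D)\hookrightarrow L^2(D)$; since $D$ is a hyperrectangle, $L^2$-projection onto piecewise polynomials of degree $\lfloor\alpha\rfloor-1$ on a mesh of $\sim j$ cells shows the approximation numbers of the embedding are $\lesssim j^{-\lfloor\alpha\rfloor/n}$, whence $\lambda_j=a_j(K)\le C\,j^{-\lfloor\alpha\rfloor/n}$. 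As $\lfloor\alpha\rfloor>n$, the tail obeys $\sum_{j>N}\lambda_j\le C\,N^{1-\lfloor\alpha\rfloor/n}$, so truncating at $N\asymp\epsilon^{-2n/(\lfloor\alpha\rfloor-n)}$ makes $\|\Psi-\Psi_N\|_{L^2}$ as small as desired, with $\Psi_N=\sum_{j\le N}\sqrt{\lambda_j}\,\psi_j\otimes\phi_j$. The eigenvalue equations $\psi_j=\lambda_j^{-1}K\psi_j$ and the defining formula for $\phi_j$ also bound the factors: $\|\psi_j\|_{C^{0,\alpha}(\overline D)}$ and $\|\phi_j\|_{C^{0,\beta}(\overline{\Omega_i})}$ grow at most polynomially in $\lambda_j^{-1}$, hence polynomially in $j$, and the interfaces across which $\phi_j$ loses smoothness lie in $\bigcup_i\partial\Omega_i$, each of class $C^{0,\beta'}$.

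Next I would replace each factor by a ReLU subnetwork: the solution network is assembled so that its $j$-th output approximates $\sqrt{\lambda_j}\,\psi_j$ (a $C^{0,\alpha}$ function on the hyperrectangle $D\subset\R^n$) and the parameter network so that its $j$-th output approximates $\phi_j$ (a function on $\Omega\subset\R^m$ that is $C^{0,\beta}$ on each $\Omega_i$ with $C^{0,\beta'}$ interfaces), running all $N$ subcomputations in parallel inside each network and padding layers with identity maps (each realizable by a two-unit ReLU layer) so the depths are equal; the plain inner product of the two output vectors is exactly $\widehat\Phi_{\rm PSNN}$. For the smooth factors I invoke the classical bounded-depth ReLU theorem (Yarotsky; Petersen–Voigtlaender), which for a $C^{0,\alpha}$ function of size $M$ on $[0,1]^n$ gives $L^\infty$-accuracy $\delta$ with $O((M/\delta)^{n/\alpha})$ weights; for the piecewise-smooth factors I invoke the Petersen–Voigtlaender theorem for piecewise $C^{0,\beta}$ functions with $C^{0,\beta'}$ boundaries, where the hypothesis $\beta'\ge2\beta(m-1)/m$ is precisely what ensures the cost of resolving the interfaces is dominated by that of resolving the smooth bulk. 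Because the $\phi_j$ are orthonormal in $L^2(\Omega)$, the product error telescopes: $\big\|\sum_{j\le N}(g_j-\widetilde g_j)\otimes\phi_j\big\|_{L^2}^2=\sum_{j\le N}\|g_j-\widetilde g_j\|_{L^2(D)}^2$, while the companion term $\sum_{j\le N}\widetilde g_j\otimes(\phi_j-\widetilde\phi_j)$ is handled by Cauchy–Schwarz using $\sum_j\lambda_j=\|\Psi\|^2<\infty$; so it suffices to approximate each factor in $L^2$ to accuracy $\sim\epsilon/\sqrt N$. Summing the per-factor weight counts over $j=1,\dots,N$, inserting the growth bounds for $\|\psi_j\|_{C^{0,\alpha}}$ and $\|\phi_j\|_{C^{0,\beta}}$ and the value $N\asymp\epsilon^{-2n/(\lfloor\alpha\rfloor-n)}$, produces the stated weight bounds, with depths bounded in terms of the dimensions and smoothness exponents only; combining the truncation error, the factor-approximation error, and the Lipschitz bound for $\sigma$ gives $\|\Phi-\Phi_{\rm PSNN}\|_{L^2(D\times\Omega)}<\epsilon$.

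The part I expect to be the main obstacle is the quantitative eigenvalue/eigenfunction bookkeeping: establishing the $j^{-\lfloor\alpha\rfloor/n}$ decay of $\lambda_j$ rigorously through the factorization of $K$ and the approximation numbers of the Hölder embedding, then tracking how fast $\|\psi_j\|_{C^{0,\alpha}}$ and $\|\phi_j\|_{C^{0,\beta}}$ may grow in $j$, and optimizing the resulting sums so the total number of nonzero weights matches the claimed exponents rather than a crude over-count. A secondary technical point is applying the piecewise-smooth ReLU approximation theory on the merely open, bounded set $\Omega$: one extends to a bounding box, treats $\Omega$ itself as one further region whose boundary is covered by the $C^{0,\beta'}$ interfaces, and checks that the constants there depend only on $m,\beta,\beta'$ and the diameters, as asserted.
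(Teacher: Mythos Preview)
Your proposal is correct and follows essentially the same route as the paper: pre-compose with $\sigma^{-1}$, apply the Mercer kernel decomposition to separate the variables, truncate at $N\asymp\epsilon^{-2n/(\lfloor\alpha\rfloor-n)}$ using eigenvalue decay, approximate each factor by a ReLU network via Petersen--Voigtlaender, combine by Cauchy--Schwarz, and finish with the Lipschitz bound for $\sigma$. The only substantive difference is that the paper obtains $\lambda_k\lesssim k^{-\lfloor\alpha\rfloor/n}$ by passing to Sobolev regularity $H^{\lfloor\alpha\rfloor}(D)\otimes L^2(\Omega)$ and quoting a result of Schwab--Todor rather than arguing through approximation numbers of the H\"older embedding, and it does not explicitly track the growth in $j$ of the individual H\"older norms of the eigenfunctions---the point you correctly flag as requiring care is simply absorbed into the constants there.
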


Observe that for the target function $\Phi$ defined in \cref{eq:Phi}, we have $\Phi(\cdot, \Theta)\in C^{0,\alpha}(\overline{D})$ for any $\Theta\in \sum_{i=0}^M \Om_i$ and any $\alpha>0$. In order for $\Phi$ to satisfy the assumptions in \Cref{thm:main_approximation}, we need to make additional assumptions regarding the regularity of solutions and the subdomains. 
%%\BL{added this sentence in case readers already forget the definition.} 
We recall that the notation $\hat{U}_j^\Theta$ is defined in \eqref{eq:SThetai}.

%%\XT{Assumptions regarding regularity are made.}
%%\BL{Good. I edited the definition a bit.}
%%\BL{How about "the system is $(\gamma, \gamma')$-regular instead of "the solution
%%is $(\gamma, \gamma')$-regular?}

\begin{definition}
      \label{def:regularity}
We say that the solutions to the system \eqref{eq:main} are $(\gamma, \gamma')$-regular 
for some $\gamma, \gamma' > 0$ if the following assumptions are satisfied: 
For each $i\in \{1, \cdots, M\}$ and $j \in \{1, \cdots, \cN_i \}$, $\hat{U}_j^{\Theta}: \Om_i \to \R^m$ is $C^{0,\gamma}(\overline{\Om_i})$; and for each $i\in \{ 0,1,\cdots, M\}$, the domain $\Om_i$ is a $C^{0,\gamma'}$ domain.
%%for some $\gamma'>0$.
\end{definition} 
\begin{lemma}
Let the solutions to \eqref{eq:main} be $(\gamma, \gamma')$-regular. 
The target function $\Phi$  defined in \eqref{eq:Phi} satisfies
the assumptions in \Cref{thm:main_approximation} on the function regularities for any $\alpha > 0$ and $\beta =\min (1,\gamma)$. In addition, the subdomains $\{ \Om_i\}_{i=0}^M$ satisfy the assumptions in \Cref{thm:main_approximation} if $\gamma'\geq 2\beta(m-1)/m$. 
\end{lemma}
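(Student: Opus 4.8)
The plan is to verify directly that the target function $\Phi$ in \eqref{eq:Phi}, under the $(\gamma,\gamma')$-regularity hypothesis, meets each of the three structural requirements imposed on $\Phi$ in \Cref{thm:main_approximation}: (i) $\Phi(\cdot,\Theta)\in C^{0,\alpha}(\overline D)$ for every $\alpha>0$ and every $\Theta$; (ii) $\Phi(U,\cdot)\in C^{0,\beta}(\overline{\Omega_i})$ for each $i$, with $\beta=\min(1,\gamma)$; and (iii) each $\Omega_i$ is a $C^{0,\beta'}$ domain with $\beta'\geq 2\beta(m-1)/m$. Claim (iii) is immediate: by Definition~\ref{def:regularity} each $\Omega_i$ is $C^{0,\gamma'}$, so the hypothesis $\gamma'\geq 2\beta(m-1)/m$ is exactly the requirement. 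Claim (i) is also essentially immediate: for fixed $\Theta\in\Omega_i$, $\Phi(\cdot,\Theta)=\sum_{j=1}^{\cN_i}\exp(-|U-\hat U_j^\Theta|^2/\delta(\Theta))$ is a finite sum of Gaussians in $U$ with fixed centers $\hat U_j^\Theta$ and fixed positive width $\delta(\Theta)\geq\delta_0$, hence real-analytic on $\overline D$ and therefore in $C^{0,\alpha}(\overline D)$ for any $\alpha>0$ (all derivatives are bounded on the bounded set $\overline D$); for $\Theta\in\Omega_0$ the function is identically zero. The only nonroutine point is that the smoothness in $U$ must be uniform in $\Theta$ — which holds because the Gaussian widths are bounded below by $\delta_0$ and the centers $\hat U_j^\Theta$ stay in the bounded set $D$, so a single constant bounds all the needed derivatives uniformly over $\Theta\in\bigcup_{i=1}^M\Omega_i$.

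The substantive step is claim (ii): fixing $U\in D$, I must show $\Theta\mapsto\Phi(U,\Theta)$ is $C^{0,\beta}$ on $\overline{\Omega_i}$ with $\beta=\min(1,\gamma)$. On $\Omega_i$ the indicator $\chi_{\Omega_i}$ equals $1$, so there $\Phi(U,\Theta)=\sum_{j=1}^{\cN_i}\exp(-|U-\hat U_j^\Theta|^2/\delta(\Theta))$. I would write this as a composition $g\circ(\hat U_j^\Theta,\delta(\Theta))$ where $g(V,d)=\exp(-|U-V|^2/d)$ is smooth in $(V,d)$ on the region $d\geq\delta_0>0$, $V$ in a bounded set, hence Lipschitz there. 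Composing a Lipschitz function with $C^{0,\gamma}$ maps yields a $C^{0,\min(1,\gamma)}$ map; a finite sum of such is again $C^{0,\beta}$. So the remaining task reduces to showing that $\Theta\mapsto\delta(\Theta)$ is $C^{0,\gamma}(\overline{\Omega_i})$, i.e. in $C^{0,\beta}$, on each fixed piece $\Omega_i$.

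The main obstacle is therefore the regularity of the deviation function $\delta$ from \eqref{eq:delta}. On a fixed $\Omega_i$ with $\cN_i\geq 2$, $\delta(\Theta)=\max\{\tfrac14\min_{j\neq j'}\|\hat U_j^\Theta-\hat U_{j'}^\Theta\|_2,\ \delta_0\}$. Each pairwise distance $\Theta\mapsto\|\hat U_j^\Theta-\hat U_{j'}^\Theta\|_2$ is a composition of the $C^{0,\gamma}$ map $\Theta\mapsto\hat U_j^\Theta-\hat U_{j'}^\Theta$ with the Euclidean norm (which is $1$-Lipschitz), hence $C^{0,\min(1,\gamma)}$; the finite minimum over $j\neq j'$ of $C^{0,\beta}$ functions is $C^{0,\beta}$ (the min of Hölder functions preserves the Hölder modulus), and finally $\max\{\cdot,\delta_0\}$ with a constant again preserves $C^{0,\beta}$. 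When $\cN_i=1$, $\delta\equiv\delta_1$ is constant, so trivially smooth. (On $\Omega_0$, $\Phi\equiv0$.) One should note that the separation assumption built into $\delta$ — that distinct solutions $\hat U_j^\Theta$ are genuinely distinct on each $\Omega_i$, combined with the floor $\delta_0$ — keeps $\delta$ bounded away from $0$, which is what guarantees $g$ above is Lipschitz rather than merely smooth on an open set. Assembling these pieces: $\Phi(U,\cdot)$ is, on each $\overline{\Omega_i}$, a finite sum of compositions of a Lipschitz function with $C^{0,\beta}$ maps, hence $C^{0,\beta}(\overline{\Omega_i})$, which is exactly the hypothesis of \Cref{thm:main_approximation}. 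I expect the only care needed is the elementary but slightly tedious bookkeeping that minima, maxima-with-constants, and norms all preserve Hölder moduli and that all constants can be taken uniform in the fixed $U$ over the bounded domain $D$.
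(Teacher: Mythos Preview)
Your proposal is correct and follows essentially the same approach as the paper: the first-variable regularity is immediate from analyticity of Gaussians, and the second-variable H\"older exponent is capped at $\min(1,\gamma)$ precisely because the deviation function $\delta(\Theta)$, built from minima and maxima of pairwise distances, is at most Lipschitz. The paper's proof is a two-sentence sketch of exactly this; your version is a careful expansion that fills in the bookkeeping the paper leaves implicit.
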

\begin{proof}
 It is obvious that $\Phi$ is $C^{0,\alpha}(\overline{D})$ regular in 
 %%their 
 its first variables for any $\alpha>0$. To get $\beta = \min(1, \gamma)$, notice that $\delta(\Theta)$ defined by \eqref{eq:delta} is at most Lipschitz continuous.    
\end{proof}

The proof of \Cref{thm:main_approximation} consists of two major steps. In the first step, we approximate $\Phi(y, x)$  by a truncated series under an orthonormal basis on $L^2(D)$, and the variables $y$ and $x$ are separated in this step. 
In the second step, we use universal approximation theories for functions of $y$ and $x$ separately and conclude the approximation estimates for the original function $\Phi(y, x)$.
The main tool of the first step of the proof is the classical Mercer's expansion for symmetric positive definite kernels \cite{10.1007/11776420_14}. 

We proceed by defining and examining some properties of
Mercer's kernel $K_\Phi$ and the corresponding operator 
$T_{\Phi} = T_{K_{\Phi}}$ associated with $\Phi$. 

%\noindent\textbf{Approximation theory} -- Step 1.  $\Phi(y,x) \approx \bm{\Psi}(y) \cdot \bm{\Xi}(x)$

\begin{definition}\label{def:kernel}
%%Assume $\Phi \in L^2(D \times \Omega).$  
Given open and bounded sets $D \subset \R^n$ and $\Om\subset \R^m.$
Mercer's kernel associated with a function $\Phi \in L^2(D \times \Omega)$ 
is the function $K_\Phi: D \times D \to \R$ defined by 
\begin{equation}
\label{KPhi}
 K_\Phi(y,z)= \int _{\Omega} \Phi(y, x )\Phi(z, x) dx \qquad \forall y, z \in D.
\end{equation}
The operator $T_\Phi: L^2 (D) \to L^2(D)$ associated with Mercer's kernel $K_\Phi$ is defined as follows: 
for any $\phi \in L^2(D),$
\[
(T_\Phi \phi) (y) = \int _{\Omega} K_\Phi(y,z)\phi(z)\, dz \qquad \forall y \in D. 
\]
\end{definition}

We denote by $C(\overline{D}; L^2(\Om))$ the class of measurable functions 
$\Phi: \overline{D}\times \Omega \to R$
such that $\Phi( y, \cdot) \in L^2(\Omega)$ for any $y \in \overline{D}$ and 
%%maps $\Phi: \overline{D} \to L^2(\Om)$, with $\Phi(y, \cdot) \in L^2(\Om)$ for 
%%each $y \in \overline{D}$, such that 
\begin{equation}
\label{yyp}
\| \Phi(y, \cdot) - \Phi(z, \cdot) \|_{L^2(\Omega)} \to 0
\quad \mbox{if }  y, z \in \overline{D}  \mbox{ and }  | y - z | \to 0.
\end{equation}
%%One verifies that $C(\overline{D}; L^2(\Om))$ 
This is a Banach space with the norm
%%\BL{Check this.}
$\| \Phi \| = \max_{y\in \overline{D}} \| \Phi(y, \cdot) \|_{L^2(\Omega)}$.
%\[
%\| \Phi \| := \max_{y\in \overline{D}} \left(\int_{\Om}  [\Phi(y, x)]^2 dx \right)^{1/2}  
%\qquad \forall \Phi\in C(\overline{D}; L^2(\Om)). 
%\]
Clearly, we have that $C(\overline{D}, L^2(\Omega))\subset L^2(D\times \Omega)$ and 
$ \| \Phi \| \le \| \Phi \|_{L^2(D\times \Omega)}/\sqrt{| D |}$
for any $\Phi \in C(\overline{D}, L^2(\Omega)).$
%%\BL{Measurable?}

\begin{proposition}
\label{prop:Mercer}
Given $\Phi \in L^2(D \times \Omega).$ 

{\rm (1)} Mercer's kernel $K_\Phi: D\times D\to \R$
is well defined, and $K_\Phi\in L^2(D\times D).$ Moreover, it has the following properties: 
\begin{itemize}
    \item[$\bullet$] Symmetry:  $K_\Phi(y,z) = K_\Phi(z,y)$ for all $y, z \in D;$
\item[$\bullet$] 
Positive semi-definiteness: 
$\sum _{i=1}^{r}\sum _{j=1}^{r} K_\Phi(y_i, y_j)c_i c_j \ge 0$ 
for  any $r \in \N$, $y_1,\dots, y_r \in D$, and  $c_1, \dots, c_r\in \R$; and 
\item[$\bullet$] Continuity: 
If in addition $\Phi \in C(\overline{D}; L^2(\Omega))$ then 
%$(\cdot, x ) \in C(\overline{D})$ for each $x \in \Omega$ and 
%the continuity is uniform with respect to $x \in \Omega$, then
$K_\Phi \in C(\overline{D\times D}).$
%\BL{Without the uniform continuity, how to show that $K_\Phi$ is continuous?}
%\XT{The continuity of $K_\Phi$ depends on the continuity of $\Phi$ in its first variable.}
%\BL{The proof given below needs the uniform continuity.}
\end{itemize}

{\rm (2)} The operator $T_\Phi: L^2 (D) \to L^2(D)$ is well defined. Moreover, it is 
linear, self-adjoint, positive semi-definite, and compact, and hence, with countably many 
nonnegative eigenvalues. 
\end{proposition}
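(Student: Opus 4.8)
The plan is to establish parts (1) and (2) in turn, using only the Cauchy--Schwarz inequality in $L^2(\Omega)$, the Tonelli/Fubini theorems, and the elementary theory of Hilbert--Schmidt integral operators.

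\emph{Part (1).} Since $\Phi\in L^2(D\times\Omega)$, Tonelli's theorem shows that $y\mapsto \|\Phi(y,\cdot)\|_{L^2(\Omega)}^2$ is integrable over $D$, hence finite for a.e.\ $y\in D$; for two such points $y,z$, Cauchy--Schwarz gives $|K_\Phi(y,z)|\le \|\Phi(y,\cdot)\|_{L^2(\Omega)}\,\|\Phi(z,\cdot)\|_{L^2(\Omega)}<\infty$, so $K_\Phi$ is well defined almost everywhere on $D\times D$, and squaring this bound and integrating yields $\|K_\Phi\|_{L^2(D\times D)}\le \|\Phi\|_{L^2(D\times\Omega)}^2$. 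Symmetry is immediate from \eqref{KPhi}. For positive semi-definiteness one rewrites, for $y_1,\dots,y_r\in D$ and $c_1,\dots,c_r\in\R$,
\[
\sum_{i=1}^r\sum_{j=1}^r K_\Phi(y_i,y_j)\,c_ic_j=\int_\Omega\Bigl(\sum_{i=1}^r c_i\,\Phi(y_i,x)\Bigr)^2\,dx\ \ge\ 0 .
\]
If moreover $\Phi\in C(\overline{D};L^2(\Omega))$, then $\Phi(y,\cdot)$ is defined for \emph{every} $y\in\overline{D}$, so $K_\Phi$ is defined on all of $\overline{D}\times\overline{D}=\overline{D\times D}$ and $M:=\max_{y\in\overline{D}}\|\Phi(y,\cdot)\|_{L^2(\Omega)}=\|\Phi\|<\infty$; a triangle inequality together with Cauchy--Schwarz then gives, for $(y,z),(y',z')\in\overline{D\times D}$,
\[
|K_\Phi(y,z)-K_\Phi(y',z')|\le M\bigl(\|\Phi(y,\cdot)-\Phi(y',\cdot)\|_{L^2(\Omega)}+\|\Phi(z,\cdot)-\Phi(z',\cdot)\|_{L^2(\Omega)}\bigr),
\]
which tends to $0$ as $(y',z')\to(y,z)$ by \eqref{yyp}; hence $K_\Phi\in C(\overline{D\times D})$.

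\emph{Part (2).} The operator $T_\Phi$ is the Hilbert--Schmidt integral operator with kernel $K_\Phi\in L^2(D\times D)$: applying Cauchy--Schwarz in the integration variable and integrating over $D$ gives $\|T_\Phi\phi\|_{L^2(D)}\le \|K_\Phi\|_{L^2(D\times D)}\|\phi\|_{L^2(D)}$, so $T_\Phi$ is a well-defined bounded linear operator on $L^2(D)$. Self-adjointness follows from $K_\Phi(y,z)=K_\Phi(z,y)$ together with Fubini. For positive semi-definiteness I would substitute \eqref{KPhi} into $\langle T_\Phi\phi,\phi\rangle$ and apply Fubini to reach $\langle T_\Phi\phi,\phi\rangle=\int_\Omega\bigl(\int_D\Phi(y,x)\phi(y)\,dy\bigr)^2\,dx\ge 0$. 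For compactness I would approximate $K_\Phi$ in $L^2(D\times D)$ by finite sums $\sum_k\psi_k(y)\eta_k(z)$ (density of tensor products, equivalently of simple functions, in $L^2(D\times D)$): the associated operators are of finite rank and converge to $T_\Phi$ in operator norm, and an operator-norm limit of finite-rank operators is compact; one may equivalently just invoke that Hilbert--Schmidt operators are compact. Finally, the spectral theorem for compact self-adjoint operators on the separable space $L^2(D)$ supplies an orthonormal eigenbasis with real eigenvalues accumulating only at $0$, and positive semi-definiteness forces all of them to be nonnegative.

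I do not expect a genuine obstacle here; the statement is a chain of standard Hilbert-space and measure-theoretic facts. The points that merit a little care are the transition from the a.e.-defined kernel of part (1) to the everywhere-defined, continuous kernel needed for Mercer's expansion later (which is exactly why the hypothesis $\Phi\in C(\overline{D};L^2(\Omega))$ is isolated as a separate case), and checking that each use of Tonelli/Fubini is licensed by the pointwise Cauchy--Schwarz bounds above before one exchanges the order of integration. In the compactness step the only real choice is whether to cite ``Hilbert--Schmidt $\Rightarrow$ compact'' or to spell out the finite-rank approximation sketched above.
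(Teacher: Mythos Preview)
Your proposal is correct and matches the paper's approach essentially line for line in part~(1): Fubini/Tonelli for well-definedness, Cauchy--Schwarz for the $L^2$ bound, the same square-completion for positive semi-definiteness, and the same splitting-plus-Cauchy--Schwarz estimate for continuity. For part~(2) the paper simply cites a standard reference (Conway), whereas you spell out the Hilbert--Schmidt argument; this is not a different route, just more detail than the authors chose to include.
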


\begin{proof}
(1)    By Fubini's Theorem and the assumption that $\Phi \in L^2(D\times \Omega)$, we have
$\Phi(y, \cdot) \in L^2(\Omega)$ for a.e.\ $ y \in D.$ Thus, 
$\Phi(y, \cdot) \Phi(z, \cdot) \in L^1(\Omega)$ for a.e.\ $(y, z) \in D\times D.$
Consequently, the integral in \eqref{KPhi} is well defined for a.e.\ 
$(y, z) \in D\times D.$ By defining this integral value to be $0$ for $(y, z)$
in a subset of $D \times D$ of measure $0$, we see that 
 Mercer's kernel $K_\Phi: D \times D \to \R$ is therefore well defined. 
 Applying twice the Cauchy--Schwarz inequality and using the fact that $D$ is bounded, 
 we obtain by direct calculations that $K_\Phi \in L^2(D \times D).$
The symmetry follows from the definition of the kernel $K_\Phi.$
The positive semi-definiteness follows from direct calculations: 
\[
\sum _{i=1}^{r}\sum _{j=1}^{r} K_\Phi(y_i, y_j)c_i c_j
=\int _{\Omega} \bigg(\sum _{i=1}^{r}c_i\Phi(y_i, x )\bigg)^2 dx \ge 0.
\]
%%Notice that by the uniform continuity each $y \in \overline{D}$ corresponds to 
%%some $\delta_y > 0$ such that  $\Phi(y', x)$ is 
%%bounded for all $y' \in B(y, \delta_y) \cap \overline{D}$
%%and all $x \in \Omega.$ Since $\overline{D}$ is compact, it can be covered by
%%finitely many such balls $B(y, \delta_y)$. Hence  $\Phi$ is bounded 
%%on $\overline{D} \times \Omega.$ 
Suppose $\Phi \in C(\overline{D}; L^2(\Omega)).$ 
Let $(y, z), (y', z') \in \overline{D\times D}.$ We have by \eqref{yyp} and 
the Cauchy--Schwarz inequality that 
%%\BL{Please check this.}
%%We have by the uniform continuity and the boundedness  of $\Phi$ that 
\begin{align*}
&K_\Phi (y, z) - K_\Phi (y', z') 
\\
& \quad = \int_\Omega \Phi(y, x) \Phi(z, x) \, dx 
- \int_\Omega \Phi(y', x) \Phi(z', x)\, dx
\\ 
& \quad = \int_\Omega (\Phi(y, x) - \Phi(y', x) ) \Phi(z, x) \, dx  
+ \int_\Omega \Phi(y', x) (\Phi(z, x) - \Phi(z', x))\, dx  \\
& \quad \le \| \Phi(y', \cdot) - \Phi(y, \cdot) \|_{L^2(\Omega)} \, 
\| \Phi (z, \cdot)  \|_{L^2(\Omega)}
+ \| \Phi (z, \cdot) - \Phi (z', \cdot) \|_{L^2(\Omega)}
\| \Phi (y', \cdot)  \|_{L^2(\Omega)}
\\
& \quad 
\to 0 \qquad \mbox{as } (y', z') \to (y, z),
\end{align*} 
proving the continuity.

(2) This is a standard result; cf.\ \cite{Conway} (Proposition 4.7, Chapter II).
\end{proof}

\begin{remark}
 The symmetry, positive semi-definiteness, and continuity
of Mercer's kernel $K_\Phi$
correspond to the assumptions of a kernel function in Mercer's Theorem
\cite{mercer1909,10.1007/11776420_14}. This justifies the use of the term ``Mercer's kernel". 
\end{remark}
  
%%%%%%%%%%%%%%
\begin{comment}
%%  Given a map $\Phi:D\times \Omega \to \R$, define an associated Mercer's 
%kernel $K_\Phi:D\times D \to \R$ by 
%   \begin{equation*}
%       K_\Phi(y,z)= \int _{\Omega} \Phi(y, x )\Phi(z, x) dx.
%   \end{equation*}
%  In addition, $K_\Phi$ is associated with a linear operator
%   $T_{K_\Phi} \phi(y)=\int _{\Omega} K_\Phi(y,z)\phi(z) dz$. 
Notice that by the Cauchy--Schwartz inequality, the kernel $K_\Phi$ is well-defined if $\Phi(y, \cdot) \in L^2(\Om)$. In addition, if $\Phi(\cdot, x)\in C(\overline{D})$, then $K_\Phi \in C(\overline{D\times D})$. With a simple observation, one can see that $K_\Phi$ is a symmetric and positive semi-definite kernel, which justifies the name ``Mercer's kernel''. Indeed, symmetry is obvious and
for any finite sequence $y_1, y_2,...y_n$ in $D$ and any $c_1, c_2,...c_n$ in $\R$, we have 
\BL{I shortened these calculations as they are relatively simple.}
\[
\sum _{i=1}^{n}\sum _{j=1}^{n} K_\Phi(y_i, y_j)c_i c_j
=\int _{\Omega} \bigg(\sum _{i=1}^{n}c_i\Phi(y_i, x )\bigg)^2 dx \ge 0.
\]

   \beq
   \begin{split}
    \sum _{i=1}^{n}\sum _{j=1}^{n} K_\Phi(y_i, y_j)c_i c_j&=\sum _{i=1}^{n}\sum _{j=1}^{n} c_i c_j\int _{\Omega} \Phi(y_i, x )\Phi(y_j, x) dx\\
    &=\int _{\Omega} \sum _{i=1}^{n}\sum _{j=1}^{n} c_i c_j\Phi(y_i, x )\Phi(y_j, x) dx\\
    &=\int _{\Omega} \sum _{i=1}^{n}c_i\Phi(y_i, x )\sum _{j=1}^{n}c_j\Phi(y_j, x) dx\\
    &=\int _{\Omega} \bigg(\sum _{i=1}^{n}c_i\Phi(y_i, x )\bigg)^2 dx\\
    &\geq 0.
     \end{split}
\eeq
Therefore by definition, $K_\Phi$ is symmetric and positive semi-definite.
By Mercer's theorem \cite{mercer1909,10.1007/11776420_14}, $K_\Phi$ can be represented by a series expansion under an orthogonal basis of $L^2(D)$. 
\end{comment}
%%%%%%%%%%%%%%%%%%

The next lemma uses this fact to represent $\Phi$ by a series expansion where the variables $y$ and $x$ are separated.
We remark that \cite{schwab2006karhunen} discussed a similar expansion under an abstract framework using Hilbert space theory. Here we present a more straightforward proof with a stronger assumption that  $\Phi$ is continuous in its first variable. 

%\BL{What is the definition of $C(\overline{D}; L^2(\Om))?$} 
%\XT{Can you check the definition added in the above?}
%%\BL{I suggest to use $\N$ instead of $\Z^+$.} \XT{Changed.}

\begin{lemma}[Kernel decomposition]
\label{lem:Mercer_expansion}
 Let $\Phi\in C(\overline{D}; L^2(\Om))$. 
 %%\BL{Same question as above: Is this $\Phi$ measurable and hence in $L^2(D\times \Om)$?} 
 Then for each $N\in\N$, there exist $\bm{\Psi} \in C(\overline{D}; \R^N)$ and $\bm{\Xi}\in L^2(\Omega; \R^N)$ such that 
 \[
  \|\Phi - \bm{\Psi} \cdot \bm{\Xi} \|_{L^2(D\times \Omega)}^2 = \sum_{k=N+1}^{\infty} \lambda_k,
  \]
  where $(\bm{\Psi} \cdot \bm{\Xi}) (y, x) : = \bm{\Psi}(y) \cdot \bm{\Xi}(x)$ and $\{ \lambda_k \}_{k=1}^\infty$ is the sequence of all the (non-negative) eigenvalues of the operator 
  $T_{\Phi}: L^2(D) \to L^2(D).$
\end{lemma}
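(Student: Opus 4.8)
The plan is to build the finite-dimensional approximant $\bm{\Psi}\cdot\bm{\Xi}$ directly from Mercer's eigendecomposition of the kernel $K_\Phi$. First I would invoke Proposition~\ref{prop:Mercer}: since $\Phi\in C(\overline{D};L^2(\Omega))$, the kernel $K_\Phi$ is symmetric, continuous on $\overline{D\times D}$, and positive semi-definite, and the associated operator $T_\Phi:L^2(D)\to L^2(D)$ is linear, self-adjoint, positive semi-definite, and compact. By the spectral theorem for compact self-adjoint operators, there is an orthonormal system $\{\psi_k\}_{k\ge 1}$ in $L^2(D)$ of eigenfunctions with eigenvalues $\lambda_1\ge\lambda_2\ge\cdots\ge 0$, $\lambda_k\to 0$, spanning the orthogonal complement of $\ker T_\Phi$. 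Mercer's theorem (under the continuity and positive-definiteness hypotheses verified above) gives the uniformly convergent expansion $K_\Phi(y,z)=\sum_{k=1}^\infty \lambda_k\psi_k(y)\psi_k(z)$, and in particular each $\psi_k$ corresponding to $\lambda_k>0$ has a continuous representative on $\overline{D}$.

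Next I would define the components. For $k$ with $\lambda_k>0$ set $\Xi_k(x):=\int_D \Phi(z,x)\psi_k(z)\,dz$, which lies in $L^2(\Omega)$ by Cauchy--Schwarz and Fubini, and set $\Psi_k:=\psi_k\in C(\overline{D})$. Then $\bm\Psi=(\Psi_1,\dots,\Psi_N)\in C(\overline{D};\R^N)$ and $\bm\Xi=(\Xi_1,\dots,\Xi_N)\in L^2(\Omega;\R^N)$ (padding with zeros if fewer than $N$ positive eigenvalues occur). The key identity to establish is $\int_\Omega \Xi_k(x)\Xi_l(x)\,dx=\lambda_k\delta_{kl}$: expanding the product and using Fubini reduces this to $\int_D\int_D K_\Phi(z,z')\psi_k(z)\psi_l(z')\,dz\,dz'=\langle T_\Phi\psi_k,\psi_l\rangle=\lambda_k\delta_{kl}$. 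Equivalently, $\{\Xi_k/\sqrt{\lambda_k}\}$ (for $\lambda_k>0$) is orthonormal in $L^2(\Omega)$, so the family $\{\psi_k(y)\Xi_k(x)/\sqrt{\lambda_k}\}$ is orthonormal in $L^2(D\times\Omega)$.

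The heart of the argument is then a Parseval/Bessel computation. I would show that $\Phi(y,x)=\sum_{k=1}^\infty \psi_k(y)\Xi_k(x)$ in $L^2(D\times\Omega)$: for fixed $x$, the function $\Phi(\cdot,x)\in L^2(D)$ has Fourier coefficients $\int_D\Phi(z,x)\psi_k(z)\,dz=\Xi_k(x)$ against the orthonormal system $\{\psi_k\}$, so $\|\Phi(\cdot,x)-\sum_{k=1}^N\psi_k\,\Xi_k(x)\|_{L^2(D)}^2=\|\Phi(\cdot,x)\|_{L^2(D)}^2-\sum_{k=1}^N\Xi_k(x)^2$ (the residual being the projection onto $\ker T_\Phi$, which is annihilated since $\Phi(\cdot,x)$ lies in the closure of the range — this needs a small argument: the component of $\Phi(\cdot,x)$ in $\ker T_\Phi$ integrates against itself over $\Omega$ to give $\langle T_\Phi(\text{that component}),(\text{that component})\rangle$-type zero, so it vanishes a.e.). Integrating over $\Omega$ and using $\int_\Omega \Xi_k(x)^2\,dx=\lambda_k$ together with $\int_{D\times\Omega}\Phi^2=\int_D K_\Phi(y,y)\,dy=\operatorname{tr}T_\Phi=\sum_{k=1}^\infty\lambda_k$ yields
\[
\|\Phi-\bm\Psi\cdot\bm\Xi\|_{L^2(D\times\Omega)}^2=\sum_{k=1}^\infty\lambda_k-\sum_{k=1}^N\lambda_k=\sum_{k=N+1}^\infty\lambda_k,
\]
as claimed. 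The main obstacle I anticipate is the careful handling of the kernel of $T_\Phi$: I must verify that the part of $\Phi(\cdot,x)$ orthogonal to all eigenfunctions with positive eigenvalue contributes nothing to the $L^2(D\times\Omega)$ norm of $\Phi$, i.e. that $\int_\Omega\|P_0\Phi(\cdot,x)\|_{L^2(D)}^2\,dx=0$ where $P_0$ is the projection onto $\ker T_\Phi$; this follows because that integral equals $\langle T_\Phi g,g\rangle$ for $g=P_0(\cdot)$ applied appropriately, or more directly from $\operatorname{tr}T_\Phi=\sum\lambda_k$ combined with $\int_{D\times\Omega}\Phi^2=\operatorname{tr}T_\Phi$. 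The continuity claims on $\bm\Psi$ and the measurability/$L^2$ claims on $\bm\Xi$ are then routine.
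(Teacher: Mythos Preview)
Your proposal is correct and follows essentially the same route as the paper: invoke Mercer's theorem for $K_\Phi$, set $\bm\Psi=(\psi_1,\dots,\psi_N)$ and $\Xi_k(x)=\int_D\Phi(z,x)\psi_k(z)\,dz$, establish the orthogonality $\int_\Omega\Xi_k\Xi_l=\lambda_k\delta_{kl}$, and then compute the $L^2$ error. The only organizational difference is in step~4: the paper integrates over $\Omega$ first, obtaining the pointwise identity $\int_\Omega(\Phi(y,x)-\bm\Psi(y)\cdot\bm\Xi(x))^2\,dx=K_\Phi(y,y)-\sum_{k\le N}\lambda_k\psi_k(y)^2=\sum_{k>N}\lambda_k\psi_k(y)^2$ via the uniform Mercer expansion, and then integrates over $D$; you instead apply Pythagoras in $L^2(D)$ first and then integrate over $\Omega$ using the trace identity $\int_D K_\Phi(y,y)\,dy=\sum_k\lambda_k$. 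These are equivalent reorderings of the same calculation.

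Your anticipated obstacle concerning $\ker T_\Phi$ is handled more simply in the paper by taking $\{e_j\}$ to be a \emph{complete} orthonormal basis of $L^2(D)$ consisting of eigenfunctions (extending the Mercer family into the kernel); the kernel elements then carry $\lambda_j=0$ and drop out of every sum automatically, so no separate argument is needed. Your trace-based resolution is equally valid.
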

\begin{proof}
By Proposition~\ref{prop:Mercer} and Mercer's Theorem
%%  Since $K_\Phi$ is a continuous symmetric positive semi-definite kernel, by Mercer's theorem 
\cite{mercer1909,10.1007/11776420_14}, 
there exists a complete orthonormal basis $\{ e_j \}_{j=1}^\infty$ of $L^2(D)$ 
consisting of eigen-functions corresponding
to the sequence of all the (nonnegative) eigenvalues $\{ \lambda_j \}_{j=1}^\infty$ of 
the operator $T_{\Phi}: L^2(D) \to L^2(D)$ such that 
%%there exist a sequence of non-negative eigenvalues $\{\lambda_j\}_{j=1}^\infty$ 
%%and the corresponding sequence of eigenfunctions $\{e_j\}_{j=1}^\infty $ of the 
%%operator $T_{\Phi}: L^2(D) \to L^2(D)$ such that 
each $e_j\in C(\overline{D})$ $(j \ge 1)$
%%$\{ e_j \}_{j=1}^\infty$ forms a complete orthonormal basis of $L^2(D)$, 
and $K_\Phi(y,z)$ can be expressed as
\begin{align}
\label{KPhi_expansion}
    K_\Phi(y, z)=\sum_{j=1}^{\infty} \lambda_j e_j(y) e_j(z) \qquad \forall y, z \in \overline{D}, 
\end{align}
where the infinite series 
%%Note that each $e_j \in C(\overline{D})$ and the above series 
converges absolutely and uniformly on $\overline{D}$. 
%%\BL{removed this: ``Let $y \in \overline{D}.$  Since $\Phi(y, \cdot)\in L^2(\Omega)$"}
Since for a.e.\ $x \in \Omega$,  $\Phi(\cdot,x) \in L^2(D)$, we have 
the following $L^2(D)$-expansion:
%%of the function $\Phi(\cdot, x) \in L^2(D):$
%%for each $y \in \overline{D}$ that 
%%Fixing $x\in \Om$, we can expand $\Phi(y, x)$ as
\begin{align*}
    \Phi(y,x)=\sum_{j=1}^{\infty} e_j(y) \varphi_j(x)\qquad \mbox{a.e. } y \in D, 
\end{align*}
%%\BL{changed $x \in \Omega$ to $y \in D$}
with the series converging in $L^2(D),$
where 
\[
\varphi_j(x) = \int_{D} \Phi(y, x) e_j(y) \, dy \qquad \forall j \ge 1.
\]
%%\BL{I removed "and the series converges in $L^2(\Omega).$" 
%%- that's something proved below - see (3.6).}
Clearly $\phi_j \in L^2(\Omega)$ for each $j \ge 1$. Moreover, we have for any $i, j \ge 1$ that
%Notice that the following property holds for $\{\varphi_j \}_j$, 
%%for any $i, j \ge 1$ we have 
\begin{equation}
\label{phiOrtho}
%\begin{split}
    \int_{\Omega} \varphi_i(x) \varphi_j(x) \,dx 
    %%&= \int_{\Omega} \left( \int_D \Phi(y, x)e_i(y) \, dy\int_D \Phi(z, x)e_j(z) \, dz\right) dx\\
    %%&=\int_{D} \left( \int_D e_i(y)e_j(z) \int_{\Omega} \Phi(y, x) \Phi(z, x) \, dx dy\right) dz\\
    %%&
    =\int_{D} e_i(y) \bigg( \int_D K_\Phi(y, z)e_j(z) dz\bigg) \, dy 
    %\\
   % &
   %%=\int_{D} e_i(y) \lambda_j e_j(y) \, dy
    =\lambda_j \delta_{ij}.
%\end{split}
\end{equation}

Let $\bm{\Psi} = (e_1,\dots, e_N)$ and $\bm{\Xi}= (\varphi_1,\dots, \varphi_N)$, and denote
%%\begin{equation}
%%    \label{PsiXi}
$(\bm{\Psi} \cdot \bm{\Xi})(y, x):= \bm{\Psi}(y) \cdot \bm{\Xi}(x) = \sum_{j=1}^{N} e_j(y) \varphi_j(x).$
%%\end{equation}
Then for each $y\in D$,  one can show that 
\begin{equation}
\label{eq:truncation_error}
\begin{split}
    &\int_{\Omega} \bigg( \Phi(y, x)-\bm{\Psi}(y) \cdot \bm{\Xi}(x)\bigg)^2 \, dx\\
  =&\int_{\Omega} \Phi(y,x)^2 dx + \int_{\Omega} (\bm{\Psi}(y) \cdot \bm{\Xi}(x))^2dx-2 \int_{\Omega} \Phi(y,x) \bm{\Psi}(y) \cdot \bm{\Xi}(x) \, dx\\
    =&K_\Phi(y,y) - \sum_{k=1}^{N} \lambda_k e_k(y)^2  =  \sum_{k=N+1}^{\infty} \lambda_k e_k(y)^2 .
\end{split}
\end{equation}
Indeed, it follows from 
%%\eqref{PsiXi}, 
\eqref{phiOrtho} and \eqref{KPhi} that 
\begin{align*}
     &\int_{\Omega} (\bm{\Psi}(y) \cdot \bm{\Xi}(x))^2dx 
    %% =\int_{\Omega} \sum_{k=1}^{N}\sum_{l=1}^{N} \varphi_k(x)\varphi_l(x) e_k(y) e_l(y) \, dx\\
      %%=&
      =\sum_{k=1}^{N}\sum_{l=1}^{N}e_k(y) e_l(y) \int_{\Omega} \varphi_k(x)\varphi_l(x) \, dx
      =\sum_{k=1}^{N} \lambda_k e_k(y)^2, 
\end{align*}
and 
\begin{align*}
\int_{\Omega} \Phi(y,x) \bm{\Psi}(y) \cdot \bm{\Xi}(x) \, dx 
= &\sum_{k=1}^{N} e_k(y) \int_{\Omega} \Phi(y,x) \varphi_k(x) \, dx\\
    =&\sum_{k=1}^{N} e_k(y)\int_{\Omega} \Phi(y,x)\left( \int_D \Phi(z,x) e_k(z)\, dz \right) dx\\
     =&\sum_{k=1}^{N} e_k(y) \int_D e_k(z) K_\Phi(y,z) \, dz =\sum_{k=1}^{N} \lambda_k e_k(y)^2,
\end{align*}
which together imply \eqref{eq:truncation_error}.
It then follows from the uniform convergence of \eqref{KPhi_expansion} that
%%%%%%%%%%%%%%%
\begin{comment}
\BL{It seems that the uniform convergence is the only result from Mercer's theorem. If 
this is the case then we don't need to use Mercer's theorem, in particular, we 
only need $\Phi$ to be in $L^2(D\times \Omega).$ The convergence below (exchange of integral
and infinite sum) can be proved by Monotone Conv Thm.}
\XT{We quoted Mercer's theorem to give an expansion of the kernel in \eqref{KPhi}, which is then used later on, e.g., in \eqref{eq:truncation_error}. If the kernel is not continuous, then \eqref{KPhi} may be understood in the $L^2(D\times D)$ sense. However, in this case, we cannot understand $K_\Phi (y,y) =\sum_{k=1}^\infty \lambda_k e_k(y)^2 $, even in the $L^2(D)$ sense. There are alternative arguments, using only the Hilbert space theory, as we commented at the beginning of this lemma. I think our version is an easy corollary of Mercer's theorem.}
\end{comment}
%%%%%%%%%%%%%%%%%%
\begin{align*}
  \|\Phi - \bm{\Psi} \cdot \bm{\Xi} \|_{L^2(D\times \Omega)}^2  &=\int_D \sum_{k=N+1}^{\infty} \lambda_k e_k(y)^2  dy= \sum_{k=N+1}^{\infty} \lambda_k. 
\end{align*}   
The proof is complete.
\end{proof}

%\begin{lemma}[Mercer's theorem]\label{lemma:mercer}
%    Suppose $K$ is a continuous symmetric positive semi-definite kernel on $D\times D$. Then there is an orthonormal basis $\{e_i\}_i$ of $L^2(D)$ consisting of eigenfunctions of $T_k$ such that the corresponding sequence of eigenvalues $\{\lambda_i \}_i$ is nonnegative. The eigenfunctions corresponding to non-zero eigenvalues are continuous on $D$, and $K$ has the representation
%        $K(s,t)=\sum_{j=1}^{\infty} \lambda_j e_j(s) e_j(t)$
%    , where the convergence is absolute and uniform.
%\end{lemma}

By \Cref{lem:Mercer_expansion},  the decay rate of the eigenvalues $\{ \lambda_k\}_{k=1}^\infty$ determines the accuracy of approximating $\Phi$ by a dot product of two $N$-dimensional vector-valued 
functions. In general, the decay rate of $\{ \lambda_k\}_{k=1}^\infty$ depends on the smoothness of $K_\Phi$. 
We now quote
\cite[Proposition 2.21] {schwab2006karhunen} that assumes the Sobolev regularity of 
the kernel function. 
%We recall that the integral operator $T_K: L^2(D) \to L^2(D)$ associated to
%a given function $K \in L^2(D\times D)$ is given by \eqref{KPhi} with 
%$T_K$ and $K$ replacing $T_\Phi$ and $\Phi$, respectively. It is linear
%and compact. If $K$ is symmetric (i.e., $K(y, z) = K(z, y)$ for 
%a.e.\ $(y, z) \in D\times D$) and semi-positive definite (i.e., $T (\phi) \ge 0$ a.e.\ in $D$ if $\phi \in L^2(D)$ satisfies $\phi \ge 0$ a.e.\
%in $D$),  then all the eigenvalues of $T$ are nonnegative and there exist a sequence of the corresponding eigenvecotors that form a complete orthonormal basis for $L^2(D); $ cf.\ \cite{Conway}. 
%%In the following, we use 
We denote by $H^p(D)$ for integer $p \ge 1$ 
the standard Sobolev space of functions with weak derivatives up to order $p$ in $L^2(D)$.

%%\BL{Should we put a definition of the space $H^p(D)?$} \XT{Done.}

\begin{lemma}[Proposition 2.21 in \cite{schwab2006karhunen}]\label{lemma:eigenvalue}
    Let $K \in L^2(D \times D)$ be a symmetric and positive semi-definite kernel, $T_K$  its associated linear integral operator, and $\{\lambda_k \}_{k=1}^\infty$ the 
    sequence of eigenvalues of $T_K$.  
        If $K \in H^p(D)\otimes H^p(D)$ for some integer $p \ge 1,$
        %%% with $p > 0$,  
        then there exists a constant $C>0$ depending only on $K$ such that
        $
        0 \leq \lambda_k \leq C  k^{-p/n} $ for all $k \ge 1$.
   %%     \quad \forall k>0.
        Moreover, the eigenfunctions of $T_K$ are in $H^p(D)$.

\end{lemma}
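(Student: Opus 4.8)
The plan is to realize $T_K$ as a composition that passes through the Sobolev space $H^p(D)$, and then to read off the eigenvalue decay from the approximation numbers of the Sobolev embedding. First I would introduce the operator $\mathcal{K}\colon L^2(D)\to H^p(D)$ defined by $(\mathcal{K}\phi)(y)=\int_D K(y,z)\phi(z)\,dz$. The hypothesis $K\in H^p(D)\otimes L^2(D)$ means that $K(\cdot,z)\in H^p(D)$ for a.e.\ $z$ with $z\mapsto\|K(\cdot,z)\|_{H^p(D)}$ in $L^2(D)$, so for $\phi\in L^2(D)$ the Bochner integral $\int_D K(\cdot,z)\phi(z)\,dz$ converges in $H^p(D)$ (Cauchy--Schwarz), and $\mathcal{K}$ is bounded with $\|\mathcal{K}\|_{L^2\to H^p}\le\|K\|_{H^p(D)\otimes L^2(D)}$; in fact $\mathcal{K}$ is Hilbert--Schmidt with Hilbert--Schmidt norm equal to $\|K\|_{H^p(D)\otimes L^2(D)}$ under the standard identification of the Hilbert tensor product with the Hilbert--Schmidt class. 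Writing $j\colon H^p(D)\hookrightarrow L^2(D)$ for the (compact) embedding, one gets the factorization $T_K=j\circ\mathcal{K}$.

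Next I would pass to approximation numbers $a_k(\cdot)$. Since $T_K$ is compact, self-adjoint and positive semi-definite, its nonnegative eigenvalues $\lambda_1\ge\lambda_2\ge\cdots\ge 0$ coincide with its approximation numbers, $\lambda_k=a_k(T_K)$: the upper bound $a_k(T_K)\le\lambda_k$ follows by truncating the spectral expansion after $k-1$ terms, and $a_k(T_K)\ge\lambda_k$ follows from a dimension count on the span of the first $k$ eigenfunctions. Approximation numbers satisfy $a_k(j\circ\mathcal{K})\le a_k(j)\,\|\mathcal{K}\|_{L^2\to H^p}$ (if $F$ has rank $<k$ with $\|j-F\|$ close to $a_k(j)$, then $F\mathcal{K}$ has rank $<k$ and $\|T_K-F\mathcal{K}\|\le\|j-F\|\,\|\mathcal{K}\|$), so combining with $\|\mathcal{K}\|\le\|K\|_{H^p\otimes L^2}$ it remains to invoke the classical estimate $a_k(j)\le C_0\,k^{-p/n}$ for the embedding $H^p(D)\hookrightarrow L^2(D)$ on a bounded domain admitting a Sobolev extension operator, with $C_0=C_0(D,p,n)$. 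This yields $0\le\lambda_k=a_k(T_K)\le C\,k^{-p/n}$ with $C=C_0\,\|K\|_{H^p\otimes L^2}$, which depends only on $K$ (and on the fixed data $D,p,n$). For the eigenfunctions: if $\lambda_k>0$ and $T_K e_k=\lambda_k e_k$ in $L^2(D)$, then $e_k=\lambda_k^{-1}T_K e_k=\lambda_k^{-1}\,\mathcal{K}e_k$, and the right-hand side lies in $H^p(D)$, so every eigenfunction with nonzero eigenvalue is in $H^p(D)$.

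The one nontrivial input is the embedding estimate $a_k(j)\lesssim k^{-p/n}$, which I would either cite (Birman--Solomyak; see also Edmunds--Triebel) or prove by a short Fourier argument: extend $f\in H^p(D)$ to $H^p$ on a large cube (iterated reflection suffices, since $D$ is a hyperrectangle, so this step is elementary here), multiply by a smooth cutoff to obtain a periodic function on a torus $\T^n$, and on $\T^n$ project onto frequencies $|\xi|\le R$; the truncation error is $\le R^{-p}\|f\|_{H^p}$ because $(1+|\xi|^2)^p\ge|\xi|^{2p}>R^{2p}$ when $|\xi|>R$, while the projection space has dimension $\sim R^n\sim k$, giving $a_k\lesssim k^{-p/n}$. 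The main obstacle is exactly this estimate and its technical baggage: treating noninteger $p$ (clean on the torus via the multiplier weight $(1+|\xi|^2)^{p/2}$) and the domain regularity underlying the extension operator, which is harmless in our setting because $D$ is a hyperrectangle but in general would require $D$ to be, say, Lipschitz.
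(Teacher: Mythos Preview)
The paper does not prove this lemma at all; it is simply quoted from \cite{schwab2006karhunen} and used as a black box in the proof of Theorem~3.7. So there is no ``paper's own proof'' to compare against.

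Your argument is correct and is essentially the standard one behind such results. The factorization $T_K = j\circ\mathcal{K}$ with $\mathcal{K}\colon L^2(D)\to H^p(D)$ bounded (indeed Hilbert--Schmidt, via the identification $H^p(D)\otimes L^2(D)\cong \mathrm{HS}(L^2(D),H^p(D))$) and $j\colon H^p(D)\hookrightarrow L^2(D)$ compact is exactly the right move, and the chain $\lambda_k = a_k(T_K)\le \|\mathcal{K}\|\,a_k(j)\le C\,k^{-p/n}$ is clean. Your remark that the approximation-number bound for the Sobolev embedding needs some domain regularity is well taken; in the paper's setting $D$ is a hyperrectangle (cf.\ the hypotheses of Theorem~3.1), so the reflection/extension step is unproblematic. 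One small caveat worth making explicit: your eigenfunction argument $e_k=\lambda_k^{-1}\mathcal{K}e_k\in H^p(D)$ applies only when $\lambda_k>0$; elements of $\ker T_K$ need not lie in $H^p(D)$, but these play no role in the Mercer expansion or in the way the lemma is used downstream.
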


%\begin{assu}
%\label{assu:Phi}
%We assume that $\Phi(y, x)$ is analytic in $y\in D$, and piecewise smooth in $x\in \Om$. 
%\XT{Change Assumption to Lemma, which establishes the property of $\Phi$}
%\end{assu}

\begin{theorem}
\label{thm:truncatedseries}
Let $\Phi \in H^p(D)\otimes L^2(\Om)$ with $p>n$. For each $N\in {\mathbb N}$,
   %% \BL{changed $Z_+ $ to $\mathbb N$; same for below.} 
    there exist $\bm{\Psi} \in H^p(D; \R^N)$ and $\bm{\Xi}\in L^2(\Omega;\R^N)$ such that 
    \[
\| \Phi - \bm{\Psi} \cdot \bm{\Xi} \|_{L^2(D\times \Omega)} \leq C_1 N^{-\frac{p-n}{2n}},
\]
where $C_1> 0$ is a constant  depending only on $p$, $n$ and $\Phi$. 
   
\end{theorem}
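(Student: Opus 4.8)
The plan is to chain together the kernel decomposition of \Cref{lem:Mercer_expansion} with the eigenvalue decay estimate of \Cref{lemma:eigenvalue}, the only new ingredient being a verification that the hypotheses of those two results are met. First I would note that since $p > n \ge n/2$, the Sobolev embedding $H^p(D) \hookrightarrow C(\overline{D})$ holds for the bounded (e.g.\ Lipschitz, or hyperrectangle) domain $D$; applying it in the $L^2(\Om)$-valued sense, $H^p(D)\otimes L^2(\Om) \cong H^p(D; L^2(\Om)) \subset C(\overline{D}; L^2(\Om))$, so in particular condition \eqref{yyp} holds for $\Phi$. Hence \Cref{lem:Mercer_expansion} applies: for each $N \in \N$ it produces $\bm{\Psi} = (e_1, \dots, e_N)$ (the first $N$ eigenfunctions of $T_\Phi$) and $\bm{\Xi} = (\varphi_1, \dots, \varphi_N) \in L^2(\Om; \R^N)$ with
\[
\| \Phi - \bm{\Psi}\cdot\bm{\Xi} \|_{L^2(D\times\Om)}^2 = \sum_{k=N+1}^{\infty} \lambda_k,
\]
where $\{\lambda_k\}$ are the nonnegative eigenvalues of $T_\Phi: L^2(D)\to L^2(D)$.

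Next I would show that Mercer's kernel inherits the Sobolev regularity of $\Phi$, i.e.\ $K_\Phi \in H^p(D)\otimes L^2(D)$. Differentiating \eqref{KPhi} under the integral sign gives, for any multi-index $\alpha$ with $|\alpha| \le p$ (for non-integer $p$ one uses the Gagliardo--Slobodeckij seminorm in $y$ in place of the top-order pointwise derivative), $\partial_y^\alpha K_\Phi(y,z) = \int_\Om \partial_y^\alpha\Phi(y,x)\,\Phi(z,x)\,dx$, and the Cauchy--Schwarz inequality in $x$ yields $|\partial_y^\alpha K_\Phi(y,z)|^2 \le \|\partial_y^\alpha\Phi(y,\cdot)\|_{L^2(\Om)}^2 \, \|\Phi(z,\cdot)\|_{L^2(\Om)}^2$. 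Integrating in $y$ and $z$ and summing over $\alpha$ bounds $\|K_\Phi\|_{H^p(D)\otimes L^2(D)}$ by $\|\Phi\|_{H^p(D)\otimes L^2(\Om)}\,\|\Phi\|_{L^2(D\times\Om)} < \infty$. By \Cref{lemma:eigenvalue}, there is $C > 0$ depending only on $K_\Phi$ (hence only on $\Phi$) with $0 \le \lambda_k \le C k^{-p/n}$ for all $k$; moreover the eigenfunctions $e_k$ lie in $H^p(D)$, so in fact $\bm{\Psi} \in H^p(D; \R^N)$ as asserted.

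Finally I would estimate the tail. Because $p > n$ we have $p/n > 1$, so comparing with an integral of the decreasing function $t \mapsto t^{-p/n}$ gives $\sum_{k=N+1}^\infty k^{-p/n} \le \int_N^\infty t^{-p/n}\,dt = \frac{n}{p-n}\, N^{-(p-n)/n}$. Combining this with the two displays above,
\[
\| \Phi - \bm{\Psi}\cdot\bm{\Xi} \|_{L^2(D\times\Om)}^2 = \sum_{k=N+1}^{\infty} \lambda_k \le C \sum_{k=N+1}^\infty k^{-p/n} \le \frac{Cn}{p-n}\, N^{-\frac{p-n}{n}},
\]
and taking square roots yields the claim with $C_1 = (Cn/(p-n))^{1/2}$, which depends only on $p$, $n$, and $\Phi$.

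I expect the main technical obstacle to be the regularity transfer $K_\Phi \in H^p(D)\otimes L^2(D)$ when $p$ is not an integer: the argument must be phrased using the fractional seminorm $\int_D\int_D |\partial_y^{\lfloor p\rfloor}\Phi(y,x) - \partial_y^{\lfloor p\rfloor}\Phi(y',x)|^2 / |y-y'|^{n + 2(p-\lfloor p\rfloor)}\,dy\,dy'$, to which Cauchy--Schwarz in $x$ is applied uniformly in the pair $(y,y')$, and one must similarly ensure the $L^2(\Om)$-valued Sobolev embedding invoked at the outset is legitimate on $D$. Both points are routine but are precisely where the hypothesis $p > n$ and the geometry of $D$ enter.
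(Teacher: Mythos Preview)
Your proposal is correct and follows essentially the same route as the paper: Sobolev embedding to place $\Phi$ in $C(\overline{D};L^2(\Om))$, the kernel decomposition of \Cref{lem:Mercer_expansion}, a regularity transfer showing $K_\Phi\in H^p(D)\otimes L^2(D)$ via Cauchy--Schwarz (the paper phrases this through the weak-derivative definition with test functions, but the content is identical), \Cref{lemma:eigenvalue} for eigenvalue decay and $H^p$ eigenfunctions, and the integral comparison for the tail. Your remark on handling non-integer $p$ via fractional seminorms is a welcome addition that the paper leaves implicit, since in its application (\Cref{thm:main_approximation}) only $p=\lfloor\alpha\rfloor$ is used.
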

\begin{proof}
We first show that $\Phi\in H^p(D)\otimes L^2(\Om)$ implies $K_\Phi \in H^p(D)\otimes H^p(D)$. Since $K_\Phi$ is symmetric, we only need to show $K_\Phi \in H^p(D)\otimes L^2(D)$.  Indeed, for any multi-index $\al$ with $0\leq |\al|\leq p$, 
we have $D^\al_y \Phi \in L^2 (D) \otimes L^2(\Omega)$, and for any test function $g\in C^\infty_c(D)$, 
    \begin{align*}
    \int_{D}\int_{\Omega} D^\al_y  \Phi(y,x) \Phi(z,x) dx g(y)dy &= \int_{\Omega}\int_{D}D^\al_y \Phi(y,x) g(y)dy\Phi(z,x) dx\\
    &=(-1)^{|\al|}\int_{\Omega}\int_{D} \Phi(y,x)D^\al g(y) dy\Phi(z,x) dx\\
    &=(-1)^{|\al|}\int_{D}\int_{\Omega}\Phi(y,x)\Phi(z,x)dx D^\al g(y) dy\\
    &=(-1)^{|\al|}\int_{D}K_{\Phi}(y,z)D^\al g(y) dy.
    \end{align*}
   Therefore, by the definition of weak derivative, $D^\al_y K_{\Phi}(y,z) = \int_{\Omega} D^\al_y  \Phi(y,x) \Phi(z,x) dx$, and  $D^\al_y K_{\Phi}\in L^2(D)\otimes L^2(D)$ since
    \begin{align*}
    \int_{D\times D} \left|\int_{\Omega} D^\al_y \Phi(y,x) \Phi(z,x)dx \right|^2dydz&\leq 
    \int_{D\times D} \int_{\Omega}D^\al_y \Phi(y,x)^2 dx \int_{\Omega}\Phi(z,x)^2 dxdydz\\
    &= \| D^\al_y \Phi \|_{L^2(D) \otimes L^2(\Omega)} \| \Phi\|_{L^2(D) \otimes L^2(\Omega)} <\infty. 
    \end{align*}
     By \Cref{lemma:eigenvalue}, then there exists a constant $C>0$ depending only on $K_{\Phi   }$ such that $ 0 \leq \lambda_k \leq C  k^{-p/n}$ 
   %%  \BL{changed $p/d$ to $p/n.$} 
     for all the eigenvalues $\{\lambda_k\}$ of $T_{\Phi}$, and the eigenfunctions $\{e_k\}$ are $H^p(D)$.
     Notice also that 
 by applying Sobolev embedding theorem with $p>n$ (see, e.g., \cite{adams2003sobolev}), $\Phi$ is continuous in its first variable on $\overline{D}$.
 It then follows from \Cref{lem:Mercer_expansion} that by taking $\bm{\Psi} = (e_i)_{i=1}^N \in H^p(D;\R^N)$ and $\bm{\Theta} = (\int_{D} \Phi(y,\cdot)e_i(y)dy)_{i=1}^N\in L^2(\Om;\R^N)$, we have
 \begin{align*}
         \|\Phi - \bm{\Psi} \cdot \bm{\Xi} \|_{L^2(D\times \Omega)}^2 = \sum_{k=N+1}^\infty \lambda_k  & \leq \sum_{k=N+1}^{\infty} C k^{-p/n}\leq  C \int_{N}^{\infty} t^{-p/n} dt\leq \tilde{C} N^{-\frac{p-n}{n}}.
    \end{align*}
   
\end{proof}

%\noindent\textbf{Approximation theory} -- Step 2.  $\bm{\Psi}\approx \bm{\Psi}_{\text{NN}} $, $\bm{\Xi}\approx \bm{\Xi}_{\text{NN}}$\\

In the next step, we use two neural networks to approximate the functions $\bm{\Psi}$ and $\bm{\Xi}$ in \Cref{thm:truncatedseries}.
We now quote two approximation results for H\"older continuous functions and piecewise H\"older continuous functions from \cite{petersen2018optimal}. Note that in the original paper, functions are defined on a unit hypercube in $\R^d$. 
It is straightforward to adapt the arguments so that they can be applied to any hyperrectangles in  $\R^d$. 
%%\XT{Assumption on $\sD$ is added based on \cite{petersen2018optimal}}
%\begin{lemma}[Proposition 2.23 in \cite{schwab2006karhunen}]\label{lemma:eigenfunction}
 %   Assume the kernel defined by integral (as \Cref{def:kernel})is piecewise analytic in $H^{p,q}$ on $D \times D$. Then the eigenfunctions $\{e_m \}_m$ are analytic in $H^{p}$ in every $D_j$.
%\end{lemma}

\begin{lemma}[Theorem 3.1 and Corollary 3.7 in \cite{petersen2018optimal}]\label{lem:NNApproximation}
Let $\sD\subset\R^d$ be an open and bounded set and $d\geq 2$. Let $f: \sD \to \R$ and $\epsilon \in (0, 1/2)$. 
Assume either one of the following conditions holds: 
\begin{enumerate}
 \item[{\rm (1)}] $\sD$ is a hyperrectangle and $f\in C^{0,\beta}(\sD)$; 
 \item[{\rm (2)}] $f$ is piecewise $C^{0,\beta}$ with respect to a finite partition $\{\sD_j\}_j$ of $\sD$, where each $\sD_j$ is a $C^{0,\beta'}$ domain for $\beta'\geq 2\beta(d-1)/d $. 
    \end{enumerate}
%\begin{enumerate}
% \item $f\in C^{k,\alpha}(\sD)$ is $C^{k,\alpha}(\sD)$, there exists $\Phi^f_\epsilon$ with at most $(2+\lceil \log_2 (k+\alpha)\rceil)(11+(k+\alpha)/d)$ layers, and at most $c\epsilon^{-d/(k+\alpha)}$ nonzero weights where $c(d,k,\alpha, B)$ is a constant depending on $d,k,\alpha, B$.
% \item Let $\{\sD_j\}_j$ be a finite partition of $\sD$. Let $\beta'=d(k+\alpha)/(2(d-1))$ and $\beta=\mathrm{max}\{k+\alpha,  \beta'\}$. If for each $\sD_j$, $\partial \sD_j$ is $C^{k, \alpha}$, $f$ is $C^{k',\alpha'}(\sD_j)$, and $k'+\alpha' \geq \beta'$, there exists $\Phi^f_\epsilon$ with at most $(4+\lceil \log_2\beta\rceil)(12+(3\beta)/d)$ layers, and at most $c\epsilon^{-2(d-1)/(k+\alpha)}$ nonzero weights where $c$ is a constant depending on $d,k,\alpha, B$ and $\{  \sD_j\}_j$.
%\end{enumerate}
Then there is a neural network $\Phi^f_\epsilon$ with at most $L$ layers, and at most $c\epsilon^{-d/\beta}$ nonzero weights, where $L\in \N$ depends only on $d$ and $\beta$ and $c >0$ depends on $d ,\beta, f$, and also, in the second case, the partition $\{ \sD_j\}_j$ such that
$
  \|\Phi^f_\epsilon-f\|_{L^2(\sD)} \leq \epsilon
$.
\end{lemma}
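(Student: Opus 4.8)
The plan is to deduce the statement from the corresponding results in \cite{petersen2018optimal} — which prove it on the unit hypercube — by a routine affine change of variables. Recall that \cite[Theorem 3.1]{petersen2018optimal} gives claim (1) when $\sD=(0,1)^d$: for $f\in C^{0,\beta}((0,1)^d)$ and $\epsilon\in(0,1/2)$ there is a ReLU network with at most $L=L(d,\beta)$ layers and at most $c(d,\beta,f)\,\epsilon^{-d/\beta}$ nonzero weights within $L^2((0,1)^d)$-distance $\epsilon$ of $f$; and \cite[Corollary 3.7]{petersen2018optimal} gives claim (2) in the same case, the hypothesis $\beta'\ge 2\beta(d-1)/d$ being exactly the boundary regularity at which the discontinuities do not degrade the rate. (Should the cited estimate be stated there in an $L^\infty$ or $L^p$ norm, it implies the $L^2$ bound on the bounded cube after rescaling $\epsilon$ by a dimensional constant.) It remains only to transfer this to a general hyperrectangle.

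For that I would proceed as follows. Write $\sD=\prod_{i=1}^d(a_i,b_i)$, set $D_0:=\operatorname{diag}(b_1-a_1,\dots,b_d-a_d)$ and $a:=(a_1,\dots,a_d)$, and let $A(t):=D_0t+a$, an affine bijection $(0,1)^d\to\sD$ with affine inverse. Put $\tilde f:=f\circ A$. Because $A$ is affine, $\tilde f\in C^{0,\beta}((0,1)^d)$ with $\|\tilde f\|_{C^{0,\beta}}\le C(\beta,\operatorname{diam}\sD)\,\|f\|_{C^{0,\beta}}$ (the chain rule contributes only fixed powers of the entries of $D_0$), and in case (2) the pulled-back partition $\{A^{-1}(\sD_j)\}_j$ is again a finite partition of $(0,1)^d$ into $C^{0,\beta'}$ domains, since affine maps preserve this class. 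Apply the cited lemma to $\tilde f$ with tolerance $\tilde\epsilon:=\epsilon\,(\det D_0)^{-1/2}$, capped at $1/4$ when that exceeds $1/2$ (which only affects constants), to get a network $\widetilde\Phi=T_{L+1}\circ\cdots\circ T_1$ with $\|\widetilde\Phi-\tilde f\|_{L^2((0,1)^d)}\le\tilde\epsilon$, and set $\Phi^f_\epsilon:=\widetilde\Phi\circ A^{-1}$. The precomposition by the affine map $A^{-1}$ is folded into the first layer, replacing $T_1(x)=a_1(A_1x+b_1)$ by $a_1(A_1D_0^{-1}x+(b_1-A_1D_0^{-1}a))$; since $D_0^{-1}$ is diagonal this leaves the sparsity pattern of $A_1$ intact, so $\Phi^f_\epsilon$ has the same depth and at most $O(d)$ more nonzero weights than $\widetilde\Phi$. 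Finally, the change of variables $x=A(t)$ gives $\|\Phi^f_\epsilon-f\|_{L^2(\sD)}^2=(\det D_0)\,\|\widetilde\Phi-\tilde f\|_{L^2((0,1)^d)}^2\le(\det D_0)\,\tilde\epsilon^2=\epsilon^2$, while the weight count $c(d,\beta,\tilde f)\,\tilde\epsilon^{-d/\beta}+O(d)$ equals $c'(d,\beta,f)\,\epsilon^{-d/\beta}$ after absorbing the dependence on $\operatorname{diam}\sD$ into the constant — exactly the asserted bound.

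The only delicate point, and the one I expect to be the mild ``obstacle'', is the bookkeeping: checking that the H\"older norms, the domain-regularity constants of the pieces, and the rescaled tolerance $\tilde\epsilon$ each change only by factors depending on $d$, $\beta$ and the side lengths of $\sD$, so that the final constant retains its claimed dependence on $d,\beta,f$ (and, in case (2), the partition), and that absorbing $A^{-1}$ into the first layer perturbs neither the layer count nor the order of the weight count. None of this is a real difficulty. The substantive content — local polynomial approximation on a dyadic grid, ReLU emulation of multiplication and of localized partition-of-unity bumps, and, in the piecewise case, resolving the interfaces via the assumed boundary regularity so that the approximation error is concentrated in a thin neighborhood of the interfaces — lies entirely in \cite{petersen2018optimal}, and I would invoke that construction rather than reproduce it.
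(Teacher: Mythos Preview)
Your proposal is essentially correct and matches the paper's treatment: the paper does not prove this lemma at all but simply cites \cite{petersen2018optimal}, remarking that ``it is straightforward to adapt the arguments so that they can be applied to any hyperrectangles in $\R^d$'' --- precisely the affine change of variables you spell out. There is one small slip: in case~(2) the domain $\sD$ is \emph{not} assumed to be a hyperrectangle, so writing $\sD=\prod_i(a_i,b_i)$ and pulling back the partition under $A^{-1}$ is not available directly; the paper handles this by first extending $f$ by zero to a hyperrectangle containing $\sD$ (the complement becomes one more piece of the partition), after which your affine reduction applies verbatim.
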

%%\XT{The following sentence is added to explain (2).} 
Note that for a function $f$ that satisfies the assumptions in \Cref{lem:NNApproximation} {(2)}, one can perform a trivial extension (by zero) of $f$ to a hyperrectangle that contains $\sD$. Such an extended function satisfies the assumptions in \cite[Corrollary 3.7]{petersen2018optimal}.

%In the following we present the proof of  \Cref{lemma:approximation_thm}.
%\noindent {\it Proof of \Cref{lemma:approximation_thm}}.

We are now ready to prove \Cref{thm:main_approximation}.

\begin{proof}[Proof of {\Cref{thm:main_approximation}}]
By the assumption on $\Phi$, we have $\Phi\in H^{\lfloor \alpha\rfloor}(D)\otimes L^2(\Om)$.
By the definition of the scaled sigmoid function $\sig$ in \cref{eq:scaledsigmoid}, we have $\sig^{-1}\circ \Phi \in H^{\lfloor \alpha\rfloor}(D)\otimes L^2(\Om)$. 
Next, we prove that $\hat{\Phi}_{\rm PSNN}$ approximates $\sig^{-1}\circ \Phi$ with the desired rates. 
We first apply \Cref{thm:truncatedseries} to $\sig^{-1}\circ\Phi$.
For a given $\ep>0$,  let $N\in\N$ be large enough such that $C_1 N^{-\frac{\lfloor\alpha\rfloor - n}{2n}}< \ep /2$, where $C_1$ is the constant in \Cref{thm:truncatedseries}(1) where we replace $\Phi$ with $\sig^{-1}\circ\Phi$ for its definition. 
Then there exists $ \bm{\Psi}\in H^{\lfloor \alpha \rfloor}(D; \R^N)$ and $\bm{\Xi}\in L^2(\Om;\R^N)$ such that $ \| \sig^{-1}\circ\Phi - \bm{\Psi} \cdot \bm{\Xi}\|_{L^2(D\times \Omega)} < \ep/2 $.
In addition, since $\sig^{-1}\circ\Phi$ is $C^{0,\alpha}$ in its first variable and piecewise $C^{0,\beta}$ in its second variable, one can argue that $\bm{\Psi}\in C^{0,\alpha}(D;\R^N)$ and $\bm{\Xi}$ is piecewise $C^{0,\beta}$ with respect to the partition $\{ \Om_i\}$. 
Let $\bm{\Psi}_{\text{NN}}:D \to \R^N$ and $\bm{\Xi}_{\text{NN}}:\Omega \to \R^N$ be two ReLu network approximation to $\bm{\Psi}$ and $\bm{\Xi}$ respectively such that, for every $1 \leq j \leq N$,
\begin{equation*}
\|e_j -  (\bm{\Psi}_{\text{NN}})_j \|_{L^2(D;\R^N)}\leq \frac{\ep}{\sqrt{N}P} \quad \text{and}\quad  \|\varphi_j -  (\bm{\Xi}_{\text{NN}})_j \|_{L^2(\Omega;\R^N)}\leq \frac{\ep}{\sqrt{N}P},
\end{equation*}
which imply
%%\BL{$M$ is different from that $M$ in $\Omega_0, \dots, \Omega_M$? 
%%Use a different notation? }
%%\XT{Changed $M$ to $P$.}
\beq
\label{eq:NNapproximation}
\|\bm{\Psi} -  \bm{\Psi}_{\text{NN}} \|_{L^2(D;\R^N)}\leq \frac{\ep}{P} \quad \text{and}\quad  \|\bm{\Xi} -  \bm{\Xi}_{\text{NN}} \|_{L^2(\Omega;\R^N)}\leq \frac{\ep}{P},
\eeq
where $P>0$ satisfies $ \sqrt{\sum_{j=1}^N \lambda_j}/P+ \sqrt{N}/P + 1/P^2 < 1/2$. 
Then by the triangle inequality and Cauchy--Schwartz inequality, one can show that 
\begin{equation*}
\begin{split}
\| &  \bm{\Psi} \cdot \bm{\Xi}  - \hat{\Phi}_{\rm PSNN} \|_{L^2(D\times \Omega)} = \|   \bm{\Psi} \cdot \bm{\Xi}  - \bm{\Psi}_{\text{NN}}\cdot \bm{\Xi}_{\text{NN}} \|_{L^2(D\times \Omega)} \\
\leq & \| \bm{\Psi} \cdot \bm{\Xi}   - \bm{\Psi}_{\text{NN}}\cdot \bm{\Xi}\|_{L^2(D\times \Omega)} + \| \bm{\Psi}_{\text{NN}}\cdot \bm{\Xi}   - \bm{\Psi}_{\text{NN}}\cdot \bm{\Xi}_{\text{NN}}\|_{L^2(D\times \Omega)}  \\
\leq & \|  \bm{\Psi} -  \bm{\Psi}_{\text{NN}} \|_{L^2(D;\R^N)}\| \bm{\Xi} \|_{L^2(\Omega;\R^N)} + \|  \bm{\Xi} -  \bm{\Xi}_{\text{NN}} \|_{L^2(D;\R^N)}\| \bm{\Psi}_{\text{NN}} \|_{L^2(\Omega;\R^N)}
\end{split}
\end{equation*}
Notice that $\| \bm{\Xi} \|_{L^2(\Omega;\R^N)}^2 = \sum_{j=1}^N \lambda_j$ by \cref{eq:truncation_error} and $\| \bm{\Psi}\|_{L^2(\Omega;\R^N)}^2 = N$, we have 
\begin{equation*}
\begin{split}
&\|  \bm{\Psi} -  \bm{\Psi}_{\text{NN}} \|_{L^2(D;\R^N)}\| \bm{\Xi} \|_{L^2(\Omega;\R^N)} + \|  \bm{\Xi} -  \bm{\Xi}_{\text{NN}} \|_{L^2(D;\R^N)}\| \bm{\Psi}_{\text{NN}} \|_{L^2(\Omega;\R^N)} \\
\leq &
\frac{\ep}{P} \sqrt{\sum_{j=1}^N \lambda_j} + \frac{\ep}{P} (\sqrt{N} + \frac{\ep}{P})<\ep/2. 
\end{split}
\end{equation*}
Therefore we only need to find the condition for \cref{eq:NNapproximation} to be satisfied. 
Notice that $M$ needs to satisfy $ \sqrt{\sum_{j=1}^N \lambda_j}/P+ \sqrt{N}/P + 1/P^2 < 1/2$. Hence one can choose $P$ such that $P = O(\sqrt{N})$. Now since we need $P$ such that
 $C_1 P^{-\frac{\lfloor\alpha\rfloor - n}{2n}}< \ep /2$, we have 
\[
\frac{\epsilon}{\sqrt{N}P}=O(\epsilon^{\frac{\lfloor\alpha\rfloor+n}{\lfloor\alpha\rfloor-n}}).
\]
%At last, 

Finally, by \Cref{lem:NNApproximation},  the number of weights in the solution-network is 
bounded above by 
\[
c_1 N (\ep^{\frac{\lfloor\alpha\rfloor+n}{\lfloor\alpha\rfloor-n}})^{-n/\alpha}
=c_1 \ep ^{-\frac{n(2\alpha+\lfloor\alpha\rfloor+n)}{\alpha(\lfloor\alpha\rfloor-n)}},
\]
where the constant $c_1 > 0$ depends only on $n$, $\beta$, and $\sig^{-1}\circ\Phi$, 
and the number of weights in the parameter-network is bounded above by
\[
c_2 N (\ep^{\frac{\lfloor\alpha\rfloor+n}{\lfloor\alpha\rfloor-n}})^{-n/\beta}
=c_2 \ep ^{-\frac{n(2\beta+\lfloor\alpha\rfloor+n)}{\beta(\lfloor\alpha\rfloor-n)}},
\]
where the constant $c_2 > 0$ depends only on $m$, $\beta$, and $\sig^{-1}\circ\Phi$. 

Combing the above results, we have $\| \sig^{-1}\circ \Phi - \hat{\Phi}_{\rm PSNN} \|_{L^2(D\times \Om)} < \ep$. Notice that the scaled sigmoid function \eqref{eq:scaledsigmoid}
is Lipschitz continuous with a Lipschitz constant less than $1$ for any $\eta<1$. Therefore, we conclude that 
\[
\| \Phi - \sig(\hat{\Phi}_{\rm PSNN}) \|_{L^2(D\times \Om)} \leq \| \sig^{-1}\circ \Phi - \hat{\Phi}_{\rm PSNN} \|_{L^2(D\times \Om)} < \ep,
\]
completing the proof.
%\YZ{$c_2 \ep ^{-\frac{2(k_1+n)(m-1)}{(k_1-n)(k+\alpha)}-\frac{2n}{k_1-n}}$}.
%If $\Phi$ is analytic, we should take $N$ satisfying $\beta(N) e^{-C_2N^{1/n}}< \epsilon/2$, where is $\beta(N)$ a polynomial of $N$ with degree $n$.
\end{proof}

%%%%%%%%%%%%%%%%%%%%%%%%%%%%%%%%%%%%%%%%%%%%%%%%%%%%%%%%%%%%%
\section{Numerical Results}
\label{sec:Results}

We use the Gray--Scott model \cite{GrayScott83,GrayScott84,GrayScott85} 
as an example to illustrate how our parameter-solution neural
network (PSNN) can be used to identify parameters with steady-state solutions and determine 
their stability.
%This model is a system of two ordinary differential equations (ODEs) 
%for two unknown functions, with two parameters. 
After reviewing the properties of steady-state solutions and their stability of the model, 
we shall test the convergence of our PSNN and algorithms applied to 
the Gray--Scott model. 
%%for approximating the parameter-solution neural network. 
Numerical phase diagrams will then be presented to show the boundaries
that separate the regions $\Omega_i$ of parameters with $\cN_i$ solutions
(cf.\ section~\ref{subsec:formulation} for the notation $\Omega_i$ and $\cN_i$), and 
the comparison between our numerical results and the 
known analytic results will also be shown. Finally, we report
some numerical results on training the PSNN using incomplete data. 

%%%%%%%%%%%%%%%%%%%%%%%%%%%%%%%%%%%%%%%%%%%%%%%%%%%%%%%
\subsection{The Gray--Scott model}
\label{ss:GS}

%In our paper, we use Gray-Scott model as an example to illustrate how coupled neural networks can help us %find the steady-states and the stability of the steady-states.

The full Gray--Scott model is a system of two reaction-diffusion equations of 
two unknown functions with two parameters. 
Relevant to our studies is the spatially homogeneous and steady-state part of 
the model, which for convenience is called the Gray--Scott model here: 
\beq
\label{eq:GrayScott}
\left\{
\begin{aligned}
     - u v^2 + f (1-u) &= 0, \\
      u v^2 - (f + k) v &= 0 .
\end{aligned}      
\right.
\eeq
Referring to the notations in \Cref{subsec:formulation}, here we have 
 $n=m=2$, $U = (u, v)$ and $\Theta = (f,k)$. 
 We define the solution space and the parameter space to be 
 $D = (0, 1) \times (0, 1) $ and $ \Omega = (0, 0.3) 
 \times (0, 0.08),$
 %\[
 %D = (0, 1)\times (0, 1) \quad \mbox{and} \quad  \Omega = (0, %0.3)\times (0,0.08), 
 %\]
 respectively \cite{GrayScott83,GrayScott84,GrayScott85}.
 One can note the Gray--Scott model has a trivial solution $(u, v) = (1, 0)$. In the upcoming discussion, we will exclude the trivial solution and concentrate solely on the nontrivial solutions.
 %Note that the trivial solution $(u, v) = (1, 0)$ of \eqref{eq:GrayScott} is not in
 %the solution space $D,$ and will therefore be ignored in our numerical tests. 
 %%Using our definition in section~\ref{subsec:formulation}, 

 Based on the quantity of solutions, the parameter space has the decomposition 
 $\overline{\Omega} = \overline{\Omega_0} \cup \overline{\Omega_1}, $  where 
 \[
 \Omega_0 = \{ (f, k) \in \Omega: f < 4 (f + k)^2 \}  \qquad \mbox{and} \qquad 
 \Omega_1 = \{ (f, k) \in \Omega: f > 4 (f + k )^2\}
 \]
 are disjoint open subsets of $\Omega$ such that 
 \begin{itemize}
 \item 
 for any parameter $\Theta = (f, k)\in \Omega_0$, \cref{eq:GrayScott} has no solution in $D, $ and 
 \item
 for any parameter $\Theta = (f, k) \in \Omega_1$, \cref{eq:GrayScott} has two distinct solutions $\hat{U}_1^\Theta$ and 
 $\hat{U}_2^\Theta$ in $D:$ 
%Decompose $\Om$ into $\Omega_1 =\Om\cap \{ f < 4 (f + k)^2\}$, and $\Omega_2 = \Om\cap \{ f > 4 (f + k)^2\}$ %such that $\cN_1 = 0$ and $\cN_2 = 2$. For any given $\Theta = (f, k)\in \Omega_2$,
\begin{equation}
\label{GSsolns}
\left\{
\begin{aligned}
%%  & \hat{U}_1^\Theta = (\hat{u}_1, \hat{v}_1)  =\left(\frac{2(f+k)^2}{f+ \sqrt{f^2-4f(f+k)^2}},\frac{f+ \sqrt{f^2-4f(f+k)^2}}{2(f+k)}\right) \in D, \\
     %% & \hat{U}_2^\Theta = (\hat{u}_2, \hat{v}_2)  = \left(\frac{2(f+k)^2}{f- \sqrt{f^2-
     %% 4f(f+k)^2}},\frac{f- \sqrt{f^2-4f(f+k)^2}}{2(f+k)} \right) \in D, 
     & \hat{U}_1^\Theta = (\hat{u}^\Theta_1, \hat{v}^\Theta_1)  =\left(
     \frac{ f  - \sqrt{f^2 - 4 f (f+k)^2} } { 2 f},\frac{f+ \sqrt{f^2-4f(f+k)^2}}{2(f+k)}\right), \\
     & \hat{U}_2^\Theta = (\hat{u}^\Theta_2, \hat{v}^\Theta_2)  = \left(
     \frac{ f + \sqrt{f^2 - 4 f (f+k)^2}}{2 f},\frac{f- \sqrt{f^2-4f(f+k)^2}}{2(f+k)} \right).
\end{aligned}      
\right.
\end{equation}
\end{itemize}
The two subsets $\Omega_0$ and $\Omega_1$ of $\Omega$ is separated by the 
parameter phase boundary for solution
\begin{equation}
    \label{solutionboundary}
    \Gamma_{\rm soln} = \{ (f, k ) \in \Omega: f = 4 (f + k)^2 \}.
\end{equation}
%%We shall call $\Gamma_{\rm soln}$ the {\it parameter phase boundary for solution}.  

The target function $\Phi$ defined in \eqref{eq:Phi} is now given by
\begin{align}
\label{PhiGS}
\Phi(U, \Theta) = 
\Phi((u,v), (f,k)) = 
\chi_{\Omega_1}((f,k))
\sum_{j=1}^{2} 
\exp{ \left( - \frac{| u - \hat{u}^\Theta_j|^2}{\delta((f,k))}-
\frac{| v - \hat{v}^\Theta_j|^2}{\delta((f,k))} \right)}.
\nonumber 
\end{align}
Considering further the stability of solutions, we have the decomposition 
$\overline{\Omega_1}  = \overline{\Omega_{1, 1}} \cup \overline{\Omega_{1, 2}}$, 
where $\Omega_{1, 1}$ and $\Omega_{1, 2}$ are two disjoint open subsets of $\Omega_1$, given by
\begin{align*}
    &\Omega_{1,1}= \{ (f, k) \in \Omega_1:  f\sqrt{f^2-4f(f+k)^2}+f^2-2(f+k)^3>0\}, \\
    & \Omega_{1,2}=\{ (f, k) \in \Omega_1:   f\sqrt{f^2-4f(f+k)^2}+f^2-2(f+k)^3<0\}.
\end{align*}
These open subsets are characterized by the following properties: 
\begin{itemize}
    \item 
    For each parameter $\Theta = (f, k) \in \Omega_{1,1}$, the solution 
$\hat{U}_1^\Theta$ in \eqref{GSsolns} is stable and the solution 
$\hat{U}_2^\Theta$  in \eqref{GSsolns} is unstable. 
\item
For each parameter $\Theta = (f, k) \in \Omega_{1, 2}$, both solutions
$\hat{U}_1^\Theta$ and $\hat{U}_2^\Theta$ in \eqref{GSsolns} are unstable.
\end{itemize}
These two subsets $\Omega_{1, 1}$ and $\Omega_{1, 2}$ of $\Omega_1$ are 
separated by the parameter phase boundary for stability 
\begin{equation}
    \label{stabilityboundary}
    \Gamma_{\rm stab} = \{ (f, k) \in \Omega_1:  f\sqrt{f^2-4f(f+k)^2}+f^2-2(f+k)^3=0\}.
\end{equation}
%%We shall call $\Gamma_{\rm stab}$ the {\it parameter phase boundary for stability}. 
%%%%%%%%%%%%%%%%%%%%%%
\begin{comment}
one of the solutions  there is a stable solution and an unstable solution; in $\Omega_{2,2}$, there are two unstable solutions. 
Indeed, the Jacobian matrix of the system \eqref{eq:GrayScott} is given by
\begin{displaymath}
    \mathbf{J}=\left( \begin{array}{cc}
  -v^2-f & -2uv \\
  v^2 & 2uv-(f+k) \\
   \end{array} \right)
\end{displaymath}
which gives us two eigenvalues $\lambda_1, \lambda_2$ satisfying
\[
\left\{
\begin{aligned}
  & \lambda_1 +\lambda_2=k-v^2, \\
  & \lambda_1 \lambda_2=(f+k)(v^2-f).
\end{aligned}      
\right.
\]     
For the steady state $(\hat{u}_2, \hat{v}_2)$, we have $\lambda_1 \lambda_2=(f+k)(v^2-f)<0$, which leads to a positive eigenvalue and a negative eigenvalue. So $(\hat{u}_2, \hat{v}_2)$ is an unstable solution. On the other hand, for $(\hat{u}_1, \hat{v}_1)$, $v^2-f>0$ is always guaranteed. When $f\sqrt{f^2-4f(f+k)^2}+f^2-2(f+k)^3>0$ holds, we have $\lambda_1 +\lambda_2=k-v^2<0$, which leads to two negative eigenvalues. In this case, $(\hat{u}_1, \hat{v}_1)$ is stable. Otherwise, $(\hat{u}_1, \hat{v}_1)$ is unstable. 
\end{comment}
%%%%%%%%%%%%%%%%%%%%%%%%%%%%%%%%%%
The target function for stability $\Phi^{\rm s}$ given in \eqref{eq:Phis} is now given by 
\begin{align*}
&\Phi^{\rm s} (U, \Theta)  = \Phi^{\rm s}((u,v), (f,k))\\
&= \chi_{\Omega_{1,1}} \bigg(\exp\left(- \frac{| u - \hat{u}^\Theta_1|^2}{\delta((f, k))}-\frac{| v - \hat{v}^\Theta_1|^2}{\delta((f, k))}\right)-\exp\left(- \frac{| u - \hat{u}^\Theta_2|^2}{\delta((f, k))}-\frac{| v - \hat{v}^\Theta_2|^2}{\delta((f, k))}\right) \bigg)\\
&-\chi_{\Omega_{1,2}} \bigg(\exp\left(- \frac{| u - \hat{u}^\Theta_1|^2}{\delta((f, k))}-\frac{| v - \hat{v}^\Theta_1|^2}{\delta((f, k))}\right)+\exp\left(- \frac{| u - \hat{u}^\Theta_2|^2}{\delta((f, k))}-\frac{| v - \hat{v}^\Theta_2|^2}{\delta((f, k))}\right) \bigg)
\\
&\qquad \qquad \qquad \qquad \forall (U, \Theta) = ((u, v), (f, k)) \in D \times \Omega.
\end{align*}
All these analytical properties of the Gray--Scott model
%%, solutions and their corresponding stability information can be obtained analytically 
can be used to generate observation data sets for training and testing our PSNNs. 
%%In practice, the observation data sets may be obtained from biology or chemistry experiments and our %%framework has the potential to tackle these situations as well.

%%%%%%%%%%%%%%%%%%%%%%%%%%%%%%%%%%%%%%%%%%%
\subsection{Convergence test}
\label{subsec: Convergence Test}

In this subsection, we conduct numerical experiments to verify the convergence of $\Phi_{\rm PSNN}$ to
the target function $\Phi$ defined in \eqref{eq:Phi_PSNN} and \eqref{eq:Phi}, respectively.
Following the analysis in \Cref{sec:Analysis}, one should observe such convergence with respect
to two aspects: the dimension $N$ of output vectors, and the architecture of two sub-networks $\Phi_{\rm PNN}$ and $\Phi_{\rm SNN}$, including the depth (i.e.,the number of layers) $L_1, L_2$ and the width $W_1, W_2$ (i.e., the number of neurons on each hidden layer).
\begin{comment}
\begin{itemize}
    \item 
    the dimension $N$ of output vectors from the two sub-networks $\Phi_{\rm PNN}$ and $\Phi_{\rm SNN}$ (cf.\ Figure~\ref{fig:NN});
\item 
the depth (i.e.,the number of layers) of each of the two sub-networks 
$\Phi_{\rm PNN}$ and $\Phi_{\rm SNN}$;
\item 
the width (i.e., the number of neurons) of hidden layers for 
each of the two sub-networks
$\Phi_{\rm PNN}$ and $\Phi_{\rm SNN}.$
\end{itemize}
\end{comment}
The convergence is measured by the decrease of the test error which is defined to be 
the mean-squared error between the outputs of the trained neural networks 
and the target function.
%and the increase of the sizes of these two networks. Here, the size of a neural network means 
%the width (i.e., the number of layers) and the number of weights in each layer of the network. 
%%verify the approximation power of the PSNN. Based on the theoretical results in \Cref{sec:Analysis},
%%two critical factors affect the approximation accuracy of the target function $\Phi$.  
%%One is the output vectors dimension (the dimension of output vector $1$ and $2$ in \Cref{fig:NN}), 
%%and another is the sizes of the parameter-network and the solution-network (
%%depicted in \Cref{fig:NN}).  

In the following numerical experiments, the size of the training data set $\cT_{\rm train}$, defined in \cref{Ttrain}, is taken with
$|I_{\rm train}| = 1000 $
and $ N_{\rm random} = 200$, and is fixed through all the experiments. The test dataset, denoted as $\cT_{\rm test}$ and generated using the same method as the training data, is fixed with $|I_{\rm test}| = 600$ and $N_{\rm random} = 200$. 
%And for the set up of target function $\Phi$ \cref{eq:Phi}, $\delta$ is chosen as $0.3$ to separate the two solutions. 
%and $C_{\Omega_1}$ is taken as $\max_{(\Theta, \{ \hat{U}_j^\Theta \}_{j=1}^2) \in \mathcal{T_{\rm observ}}} \{1+ \exp{ \left( - \frac{| \hat{u}^\Theta_1 - \hat{u}^\Theta_2|^2}{\delta}-
%\frac{| \hat{v}^\Theta_1 - \hat{v}^\Theta_2|^2}{\delta} \right)}\}.$

We first test the convergence with respect to the increase of the dimension of output vectors $N$. At the same time, we enlarge the depth of two sub-networks $L_1, L_2$ together, while the width $W_1, W_2$ are fixed as $30$ and $20$ respectively which are considered to be large enough. As shown in \Cref{fig:convergenceN}, we have $N$ increase from $2$ to $8$, and $L_1, L_2$ change together from $1$ to $6$.
%%hobserve the convergence concerning the dimension of the output vectors, denoted by $N$, given that %%the parameter-network and the solution-network are large enough. 
And by comparing different curves for each fixed depth, one can observe a clear mind overall decrease of the error as the value of $N$ increases. Also, focusing on each curve, one can find an obvious decrease of error as $L_1, L_2$ increase.
%%\Cref{fig:convergenceN} shows an obvious decrease in the test errors as $N$ increases from $2$ to $8$. 
\begin{figure}[htbp]
    \centering
    \includegraphics[scale=0.55]{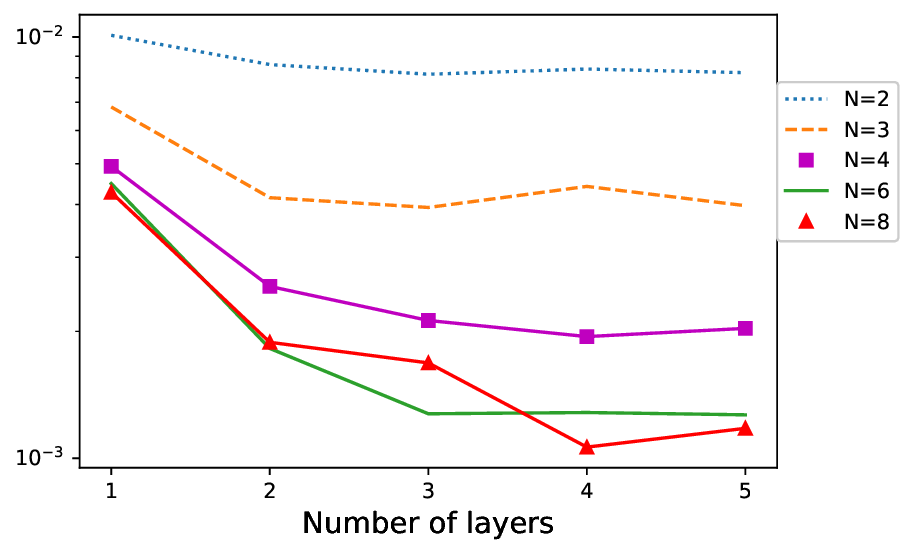}
    \caption{\footnotesize Test errors against the depth of the two
    sub-networks $\Phi_{\rm PNN}$ and $\Phi_{\rm SNN}$ for PSNN for different output dimensions $N$. Here the $x$-axis is the value of $L_1, L_2$ (with $L_1=L_2$). The $y$-axis gives the test error in $log$ scale. And curves in different colors depict the test error corresponding to different values of $N$.
    }
    \label{fig:convergenceN}
\end{figure}

We next test the convergence with respect to PSNN structures while fixing the dimension $N$. From the previous numerical tests, it is sufficient to take $N=8$. 
%%The theoretical analysis in 
%%\Cref{sec:Analysis} indicates that the  sizes of the parameter-network  
%%and the solution-network depend on the regularity of the function $\Phi(U, \Theta)$ in   
%%$\Theta$ and $U$, respectively. 
%%Here we test convergence with respect to the size change of each sub-network 
%%by fixing one network and changing the other. 
\Cref{fig:convergencesize} summarizes our convergence tests for the network
$\Phi_{\rm PNN}$ and $\Phi_{\rm SNN}$ separately. In \Cref{fig:convergencesize}(A), we plot the error against depth of $\Phi_{\rm SNN}$ for several values of the width of $\Phi_{\rm SNN},$ while 
the structure of $\Phi_{\rm PNN}$ is fixed with $4$ layers and $60$ 
neurons in each layer.
%%while the solution-network has varied structures for the convergence test. 
In contrast, \Cref{fig:convergencesize}(B) shows the test error against 
the depth for varied number of width of $\Phi_{\rm PNN}$, while the structure of $\Phi_{\rm SNN}$ is fixed 
with $3$ layers and $20$ neurons in each layer. By comparing the different curves in \Cref{fig:convergencesize}, one can observe 
 the decrease in test erro as the sizes of $\Phi_{\rm PNN}$ and $\Phi_{\rm SNN}$ increase. Additionally, it is worth noticing that the error decreases much faster with respect to the size increase for the solution-network than the parameter-network. This observation is 
 consistent with our analysis presented in \Cref{sec:Analysis} based on the 
 different regularity of the target function $\Phi(U, \Theta)$ in its variables 
 $\Theta$ and $U$.

\begin{figure}[htbp]
    \centering 
    \begin{subfigure}{0.43\textwidth}
 %%   \begin{subfigure}{0.395\textwidth}
        \includegraphics[clip=true, trim=0 0 0 0, width=\linewidth]{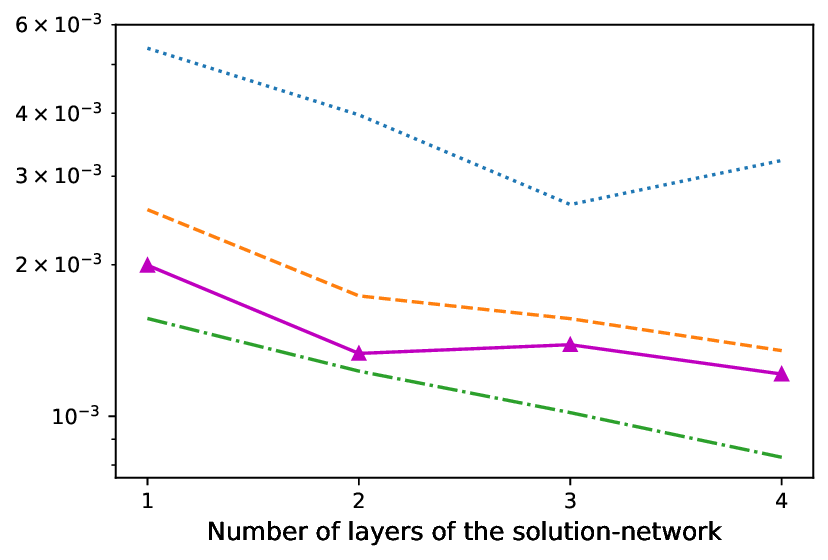}
        \caption{}
       % \label{fig:convergencesizeA}
    \end{subfigure}  
    \begin{subfigure}{0.53\textwidth}
%%    \begin{subfigure}{0.48\textwidth}
       \includegraphics[clip=true, trim=0 0 0 0, width=\linewidth]{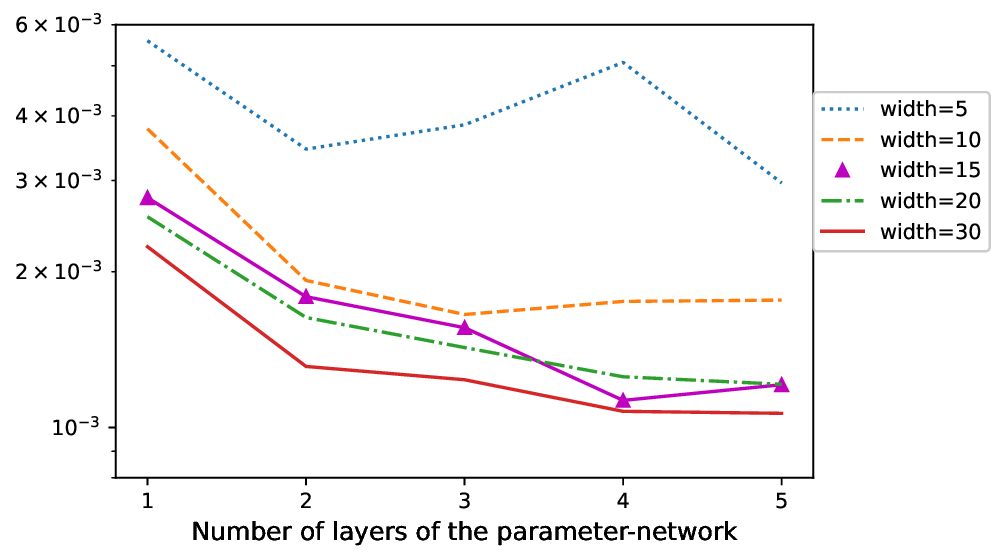}
        \caption{}
%\label{fig:convergencesizeB}
    \end{subfigure}
    \vspace{-10 pt}
    \caption{\footnotesize
    Test errors against the depth with varying width for the PSNN with a fixed dimension 
    $N = 8$ of the output vectors from the two sub-networks. The $x$-axis represents the value of $L_1$ (or $L_2$), and the $y$-axis gives the test error in $log$ scale.
    %%under different sizes (widths $\times$ \#layers).
    %%Output vector dimension $N=8$.
     (a) $\Phi_{\rm SNN}$ is tested for $L_2=1,2,3,4$ and $W_2=5, 10, 15, 20,$ and $\Phi_{\rm PNN}$ has the fixed structure: $L_1=4, W_1=30.$
     (b) $\Phi_{\rm PNN}$ is tested for $L_2=1,2,3,4,5$ and $W_2=5, 10, 15, 20, 30,$ and $\Phi_{\rm SNN}$ has the fixed structure: $L_1=3, W_1=20.$
     }
   \label{fig:convergencesize}
   \vspace{-20 pt}
\end{figure}

%%%%%%%%%%%%%%%%%%%%%%%%%%%%%%%%%%%%%%%%%%%%%%%%%%%%%%%%%%%%
%\subsection{Solution prediction and phase diagram}
%%\subsection{Locating solutions and determining phase diagram}
\subsection{Locating solutions and phase boundaries}
\label{subsec: phasediag}
%%\BL{I suggest to change the title of this subsection: Locating solutions and determining
%%phase boundaries. ``Locating solution" is the title of a subsection before; so consistent.}
We now present numerical results to show how our trained PSNN and the post-processing algorithm, 
Algorithm~\ref{alg:cluster}, can help determine the number of multiple solutions, locate such 
solutions approximately, and predict the stability of such solutions. We also show the error between
the predicted solutions and the exact solutions. 
%%In this subsection, we implement the algorithm proposed in \Cref{subsec:solutions} 
%%to locate solutions based on the learned PSNN. 
%%We are concerned with whether the algorithm can predict the 
%%correct number of steady-state solutions, 
%%the error between the predicted solutions and the exact solutions, and whether the 
%%algorithm can predict the stability of the solution correctly.
The prediction of the number of solutions and their stability properties
corresponding to the given parameters is
described by phase diagrams, which consist of phase boundaries in 
the parameter space $\Omega$. 
For the Gray--Scott model, the exact parameter phase boundary for solution and
the parameter phase boundary for stability are given explicitly in 
\eqref{solutionboundary} and \eqref{stabilityboundary}, respectively. 

We take the training data set and the test data set same as those 
%%in the way same as 
described in \Cref{subsec: Convergence Test}. 
The dimensions of the parameter network are set to $L_1=4$ and $W_1=30$, while the dimensions of the solution network are $L_2=3$ and $W_2=20$. The dimension $N$ of the output vectors 
from these sub-networks is set to be $N = 8.$ The maximum allowable number of solutions, denoted as $C_{\rm max}$, is fixed at 5; that is, the algorithms will check the range from 0 to 5 and then determine the number of solutions each parameter pair is associated with. 

The phase diagrams and the average error of locating solutions of two given algorithms are presented as follows. We compare the PSNN-based algorithm (see Algorithm \ref{alg:cluster}) and the naive mean-shift-based algorithm (see Algorithm \ref{alg:meanshift}) using the same fixed training and test data sets. 
%%For the PSNN, we set the parameter-network with a fixed size $50\times 4$, 
%%and the solution-network is fixed with a size $20\times 2$. The dimension of 
%%the output vectors for the two-subnetworks is $N=8$.
The phase diagrams for solution prediction for different parameters are plotted in \Cref{fig:phasediag}. 
Different colors in these plots correspond to different numbers of predicted solutions.  It is noticeable that the PSNN-based algorithm generally provides accurate predictions for the number of solutions corresponding to each parameter (see \Cref{fig:phasediag}(A)). Therefore, the phase boundary can be predicted quite faithfully using the algorithm. The mean-shift-based algorithm (see \Cref{fig:phasediag}(B)), however, does a poor job of predicting the number of solutions, especially when the number of solutions is non-zero.

\begin{figure}[htbp]
    \begin{subfigure}{0.49\textwidth}
    \includegraphics[scale=0.41]{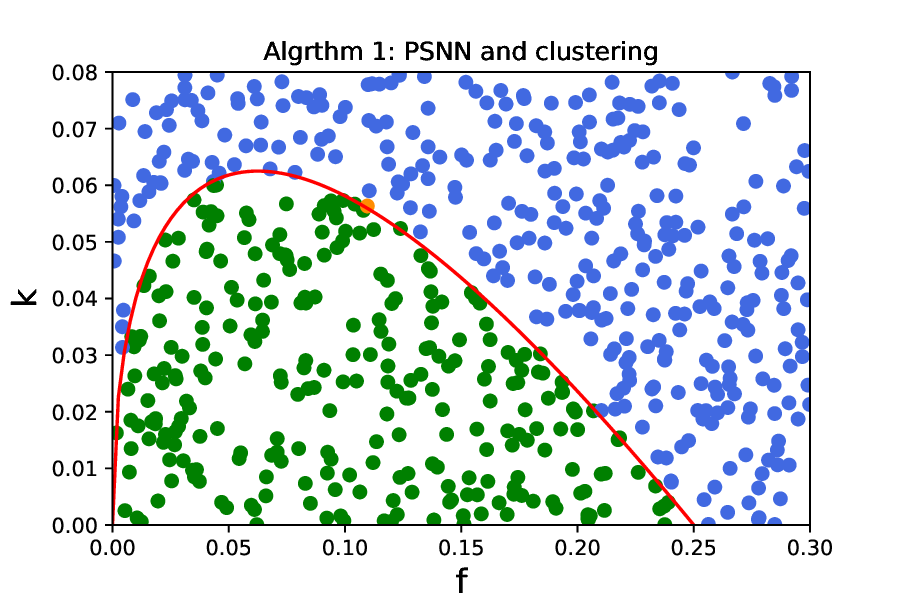}
        \caption{\footnotesize PSNN-based algorithm}
    \end{subfigure}  
    \begin{subfigure}{0.49\textwidth}
   \includegraphics[scale=0.41]{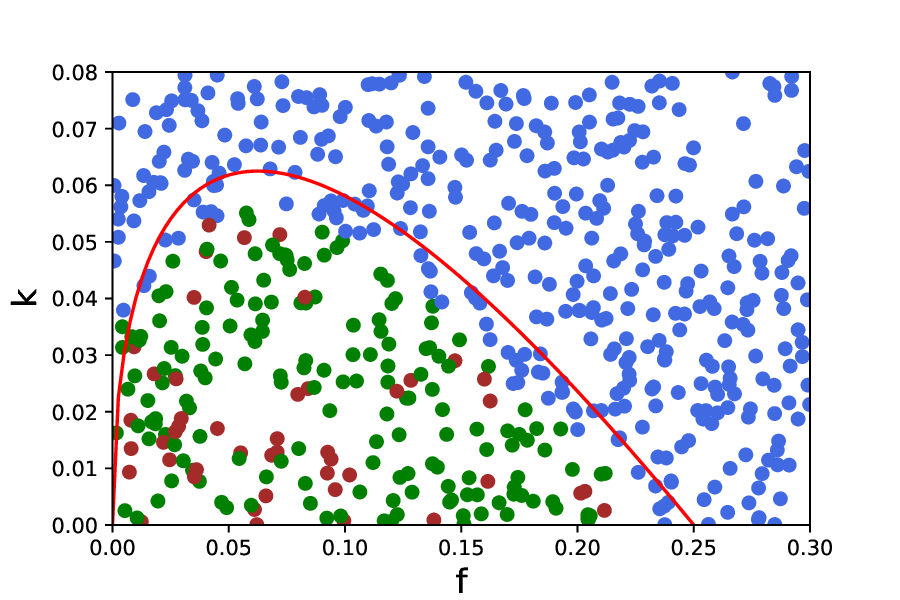}
        \caption{\footnotesize Mean-shift-based algorithm}
    \end{subfigure}
    \vspace{-10 pt}
    \caption{\footnotesize
    The phase diagram for solution prediction. The algorithms are set up to forecast the number of solutions within range $\{0,1, ,\dots, 5\}.$ The blue points represent the parameter pairs that the algorithm recognized as \textit{no-solution}, and the brown points, green points and orange points respectively correspond to \textit{1-solution}, \textit{2-solution}, \textit{3-and-more-solution}. The red curve represents  the phase boundary of $\Omega_0$ and $\Omega_1$, which is given by the formula $f-4(f+k)^2=0$.}
    \label{fig:phasediag}
\end{figure}
 
In addition, we use $\Phi^s_{\text{PSNN}}$ to predict the stability of the learned solutions and the corresponding phase diagram is shown in \Cref{fig:stability}.  An additional phase boundary curve is added that distinguishes stable solutions from unstable solutions in the region $\Om_2$. One can observe that the predicted result shows high consistency with the true phase boundaries.

\begin{figure}[htbp]
    \centering
    \vspace{-10 pt}
    \includegraphics[scale=0.45]{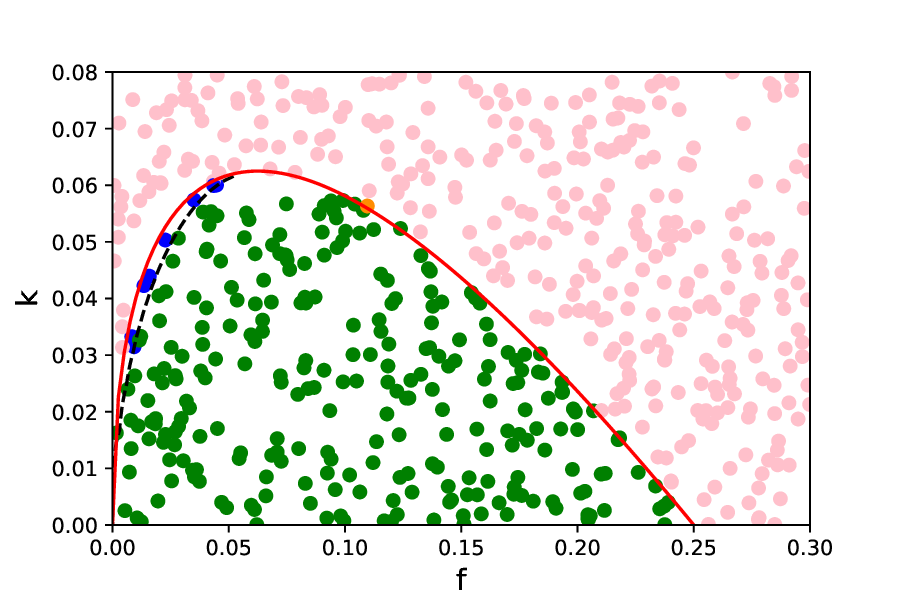}
    \vspace{-10 pt}
    \caption{\footnotesize  
    The Phase diagram for solution prediction with stability information. The algorithm is set to forecast the stability of learned solutions. The blue points, green points, brown points respectively represent \textit{2-unstable-solution}, \textit{1-stable-1-unstable-solution}, \textit{2-stable-solution}, and the black dashed curve plotted on $f\sqrt{f^2-4f(f+k)^2}+f^2-2(f+k)^3=0$ gives the boundary of $\Omega_{1,1}$ and $\Omega_{1,2}$.}
    \label{fig:stability}
    \vspace{-20 pt}
\end{figure}

%%%%%%%%%%%%%%%%%%%%%%%%%%%%%%%%%%%%%%%%%%%%%%%%%%%%%%%%
\subsection{Incomplete data}\label{subsec: incplt data}

In practical applications, observation data may not be complete, as defined
in \Cref{subsec:TrainingPSNN}. Here, we perform numerical tests on training our
neural networks with incomplete data sets to examine if our approach and algorithms can be
generalized.
%%there are occasions when not all the solutions are collected and the %observation data set may miss %%some solutions for a given parameter.
%To demonstrate the generalization power of our algorithms, we perform 
%%numerical tests on incomplete observation data sets. 
In this subsection, we take a training data set $\cT_{\rm train}$ with $|I_{\rm train}|=1200$ and $N_{\rm random}=200$ and the test data set is the same as described in \Cref{subsec: Convergence Test}. Among the parameters for which two solutions exist (approximately 510 out of the 1200 parameters), we take 120 of them and remove one of the two solutions for each in the observation data set to form an incomplete observation data set.
Furthermore, we perform a concentrated sampling technique designed to be compatible with incomplete data sets. For a complete data set, one can select $N_{\rm random}$ sampled points within the vicinity of each observed solution. This is illustrated by the left picture in \Cref{fig:incompletedatapts}, where $200$ random points are sampled from two neighborhoods of radius $2\delta(\Theta)$ of the observed solutions for a given parameter $\Theta=(f,k)$. Note that $\delta(\Theta)$ is defined in \eqref{eq:delta}. 
With an incomplete data set, the information 
surrounding the missing solution is removed, and we only sample points from the vicinity of the remaining observed solutions. The right picture in \Cref{fig:incompletedatapts} illustrates that when only one solution is observed for the same parameter $\Theta=(f, k)$, we only sample $100$ random points from a neighborhood of radius $2\tilde{\delta}(\Theta)$ of the observed solution. The determination of $\tilde{\delta}(\Theta)$ is carried out as follows. For a given parameter $\Theta$, let $n(\Theta)$ denote the number of observed solutions in a given (incomplete) data set. Define $m_\ep(\Theta): = \max \{n(\bar{\Theta}): \bar{\Theta}\in \cN_\ep(\Theta) \}$, where $\cN_\ep(\Theta) \subset \Omega$ denotes a neighborhood of $\Theta$ dictated by a small number $\ep>0$. Then we define
\beq
\label{eq:delta_incomplete}
\tilde{\delta}(\Theta) = \text{mean} \{ \delta(\bar{\Theta}): \delta(\bar{\Theta}) = m_\ep(\Theta) \text{ for } \bar{\Theta} \in \cN_\ep(\Theta) \}.
\eeq
Note that in the above definition, for any $\bar{\Theta}\in \cN_\ep(\Theta)$,  $\delta(\bar{\Theta})$ is computed by the formula in \eqref{eq:delta} where $S^{\bar{\Theta}}$ is taken as the observed (incomplete) solution set.
 The objective is to have $\tilde{\delta}(\Theta)$ serve as an approximation to the true value $\delta(\Theta)$ computed with a complete observation of solutions. In our experiment, we take $\cN_\ep(\Theta)$ to be a neighborhood of $\Theta$ that contains a few numbers of other parameters to ensure a large likelihood that $\tilde{\delta}(\Theta)$ approximates the true value. 
 
\begin{figure}[t]
\begin{subfigure}{0.49\textwidth}
    \includegraphics[clip=true, trim=20 10 30 0, width=\linewidth]{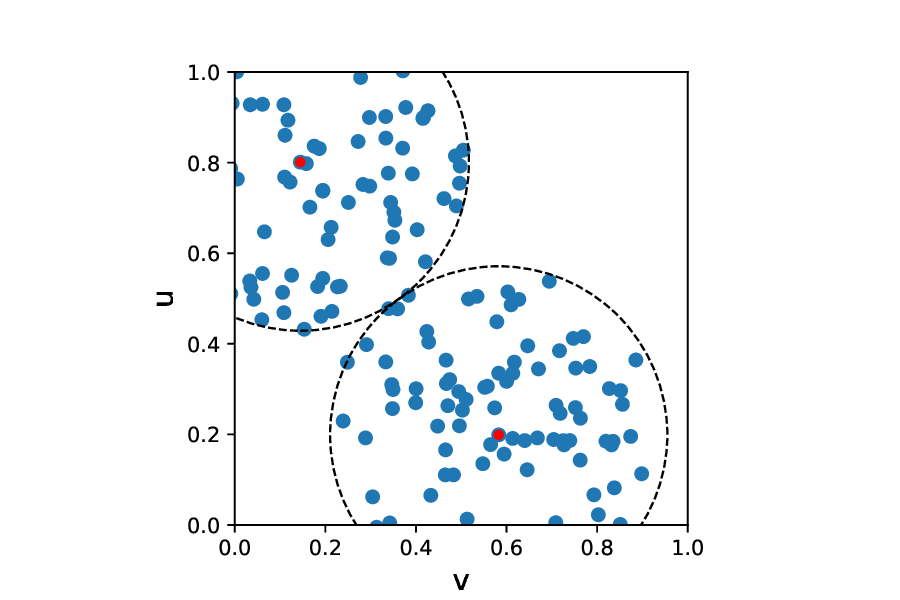}
\end{subfigure}
    \begin{subfigure}{0.49\textwidth}
     \includegraphics[clip=true, trim=20 10 30 0, width=\linewidth]{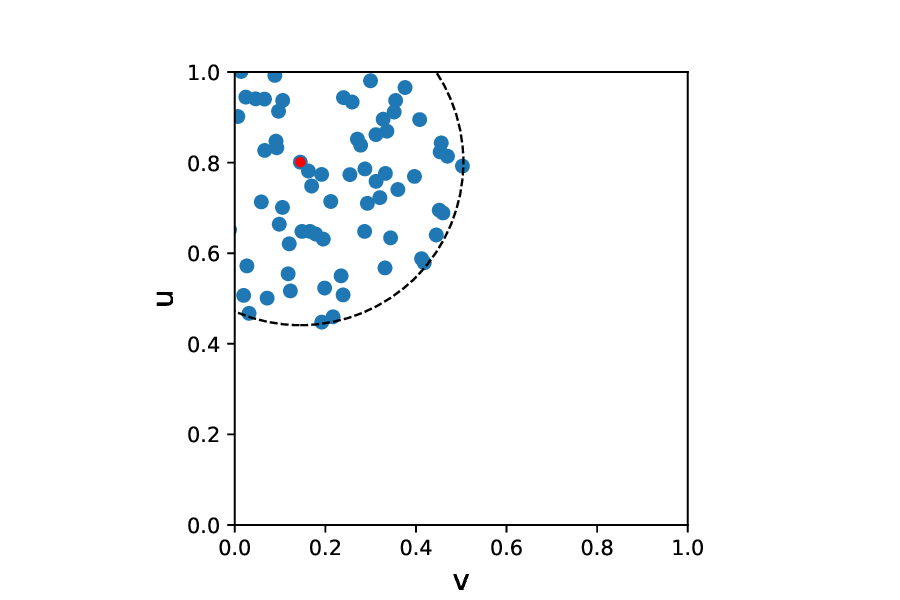}
\end{subfigure}  
        \caption{\footnotesize 
        Comparison of  complete data points ({\it left}) with incomplete data points ({\it right})  for a given pair $\Theta=(f,k)$ for which two solutions exist. Left: 200 randomly selected points in two neighborhoods of radius $2\delta(\Theta)$ of the observed solutions. Right: 100 randomly selected points with a  neighborhood of $2\tilde{\delta}(\Theta)$ the observed solution.
        $\delta(\Theta)$ and $\tilde{\delta}(\Theta)$ are defined in \cref{eq:delta,eq:delta_incomplete}, respectively. 
        }
        \label{fig:incompletedatapts}
         \vspace{-20 pt}
\end{figure}

\begin{figure}[htbp]
\vspace{-10 pt}
\begin{subfigure}{0.48\textwidth}
    \includegraphics[clip=true, trim=20 10 30 0, width=\linewidth]{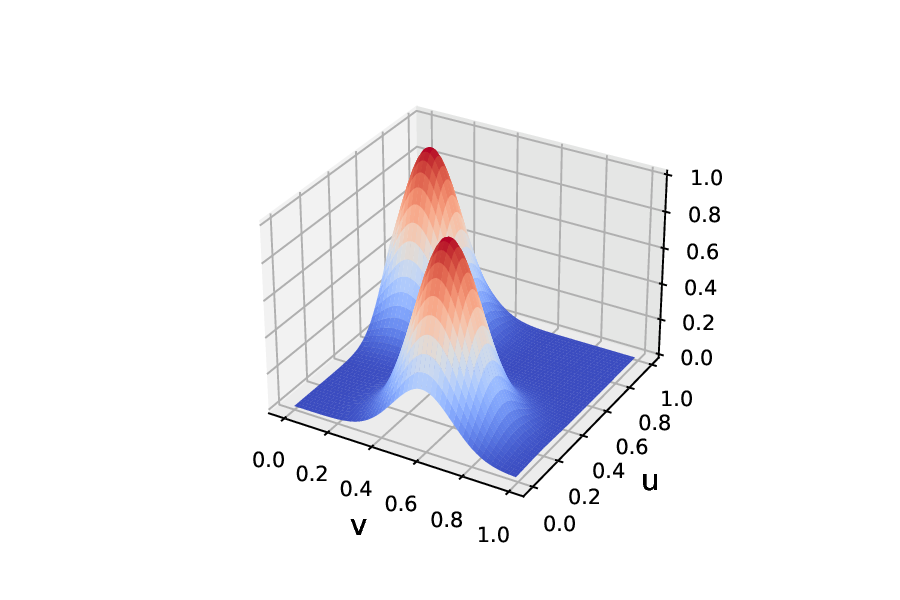}
\end{subfigure}
\begin{subfigure}{0.48\textwidth}
    \includegraphics[clip=true, trim=20 10 30 0, width=\linewidth]{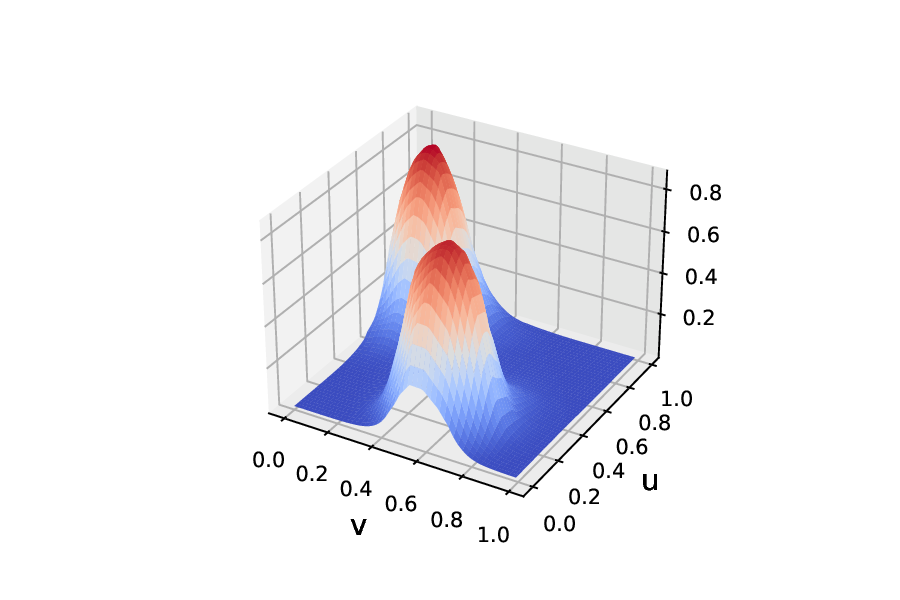}
\end{subfigure}
\vspace{-10 pt}
\caption{\footnotesize
 With the same parameter pair $\Theta=(f,k)$ in \Cref{fig:incompletedatapts}, the left figure shows the target function $\Phi((u, v), (f,k))$, and the right figure shows the PSNN-recovered function $\Phi_{\text{PSNN}}((u, v), (f,k))$ with the incomplete complete training data. PSNN recovers the missing information surrounding one of the solutions.
        }
        \label{fig:RBF}
         \vspace{-10 pt}
\end{figure}

We observe that the trained PSNN on incomplete data successfully covers the missing information, as illustrated by \Cref{fig:RBF}.
The left plot in \Cref{fig:RBF} shows the target function $\Phi$ for the same parameter $\Theta$ chosen in \Cref{fig:incompletedatapts}. In particular, the training data set has no information around the missing solution corresponding to the given parameter (cf. \Cref{fig:incompletedatapts}(right)). Yet, as shown by the right plot in \Cref{fig:RBF}, the trained PSNN successfully predicted the function surrounding the missing solution.

Additionally, we use the trained PSNN on incomplete data for solution prediction and draw the phase diagram in \Cref{fig:phasediag_incomplete} in comparison with the one predicted by the mean-shift-based algorithm. With incomplete data, the PSNN-based algorithm again successfully predicts the phase diagram, similar to the one in \Cref{fig:phasediag}.
In \Cref{tab:error_table}, quantitative errors for solution prediction on both complete and incomplete data sets are presented. For the experiment with incomplete data, in addition to the test on the randomly chosen test data set, we also record the algorithm prediction for the $120$ parameters with missing information in the observation data.  PSNN demonstrates effectiveness cross all tests.  
%In \Cref{tab:error_table}, it's more clear to see how the PSNN-based algorithm significantly outperforms the meanshift algorithm in predicting the number of solutions, achieving a high accuracy of 0.992. Additionally, the solutions located by PSNN-based algorithm demonstrate remarkable proximity to the true solutions,  with an average relative distance of 0.0017. Furthermore, despite being trained on incomplete data, the PSNN still generates the algorithm successfully, exhibiting an accuracy of 0.0086 in forecasting the number of solutions on random test set and zero-free recovery of the solutions within the lost data set.
As a comparison, the naive mean-shift-based algorithm exhibits significantly poorer performance than the PSNN-based algorithm, particularly on the lost data set. 
%It's noteworthy that we select Meanshift algorithm as comparison to highlight the pivot role of our PSNN. This choice is based on the fact that the same data points of the same kernel functions are being employed in Meanshift algorithm as in training the PSNN. By comparison, one can find the remarkable effectiveness of PSNN in predicting the solutions for random unknown parameters. And this impressive capability, indicated by our analysis, is attributed to the precise interpolation of the target function achieved by our parameter-solution neural networks.

\begin{table}[hptb]
    \centering
    \begin{tabular}{|c|c|c|c|c|c|c|}
    \hline
    \multirow{2}{*}{ Training set}&\multirow{2}{*}{ Test set}&\multicolumn{3}{c|}{  PSNN}&\multicolumn{2}{c|}{ Mean-shift}\\
    \cline{3-5}  \cline{6-7}
    & & \scriptsize Wrong-soln& \small Distance&\scriptsize Wrong-stb&\scriptsize Wrong-soln&\small Distance\\
    \hline
    Complete&Random&1.22\%&0.020&5.12\%&14.06\%&0.149\\
    \hline
    \multirow{2}{*}{Incomplete}&Random&1.22\%&0.020&4.58\%&14.22\%&0.149\\
    \cline{2-7}
    &Lost data&0.56\% &0.018&0\%&36.94\%&0.147\\
    \hline
    \end{tabular}
    \caption{ \footnotesize Error table of PSNN-based algorithm and mean-shift-based algorithm using the complete/incomplete data, and performed on random/lost test data.  \textit{Wrong-soln} counts the percentage of parameters for which the algorithm fails to determine the correct number of solutions. \textit{Distance} signifies the average relative distance (as defined in \eqref{def:relativedist}) between the predicted solution set and the exact solution set of the parameters where the number of solutions has been correctly predicted. And \textit{Wrong-stb} denotes the percentage of parameters for which the algorithm's predictions on stability were inaccurate. To reduce the impact of randomness, we performed each set of experiments three times and evaluated the two algorithms on three separate test sets, calculating the average results.} 

    \label{tab:error_table}
\end{table}
\begin{figure}[htbp]
\vspace{-10 pt}
    \begin{subfigure}{0.45\textwidth}
\includegraphics[clip=true, trim=0 0 30 0, width=\linewidth]{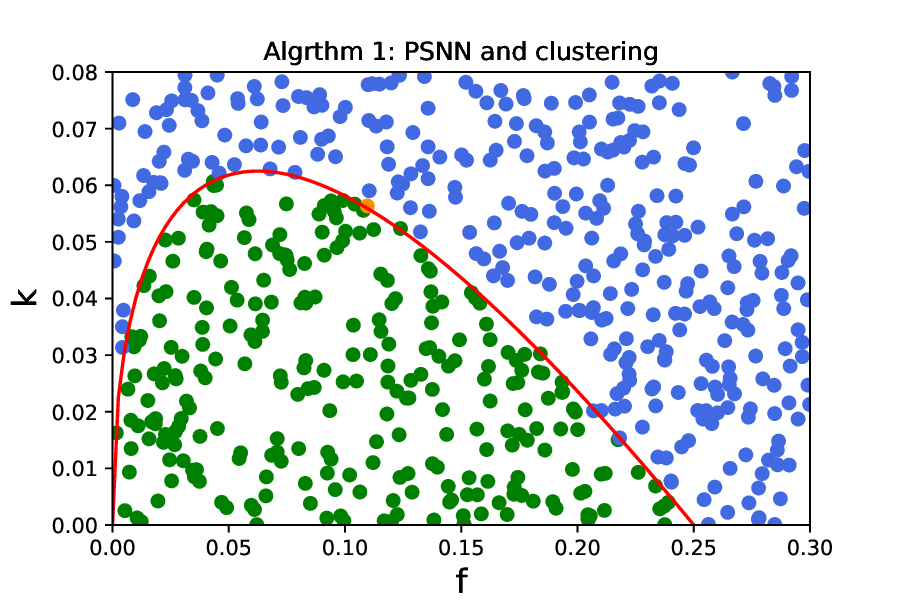}
        \caption{\footnotesize PSNN-based algorithm with incomplete data}
    \end{subfigure}  
    \begin{subfigure}{0.45\textwidth}
  \includegraphics[clip=true, trim=0 0 30 0, width=\linewidth]{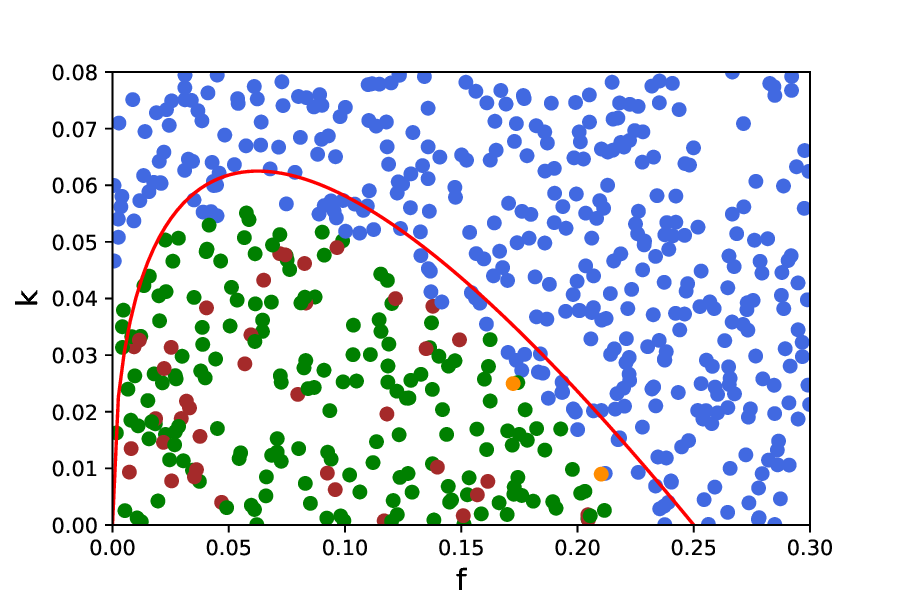}
        \caption{\footnotesize Mean-shift-based algorithm with incomplete data}
    \end{subfigure}
    \caption{\footnotesize 
    Phase diagrams for solution prediction using incomplete data. Similar to \cref{fig:phasediag}, the blue points, the brown points, green points and orange points respectively represent the parameter pairs that the algorithm recognized as \textit{no-solution}  \textit{1-solution}, \textit{2-solution}, \textit{3-and-more-solution}, and the red curve represents the phase boundary of $\Omega_0$ and $\Omega_1$. }
    \label{fig:phasediag_incomplete}
\end{figure}

\begin{figure}[htbp]
    \centering
    \vspace{-.5cm}\includegraphics[scale=0.45]{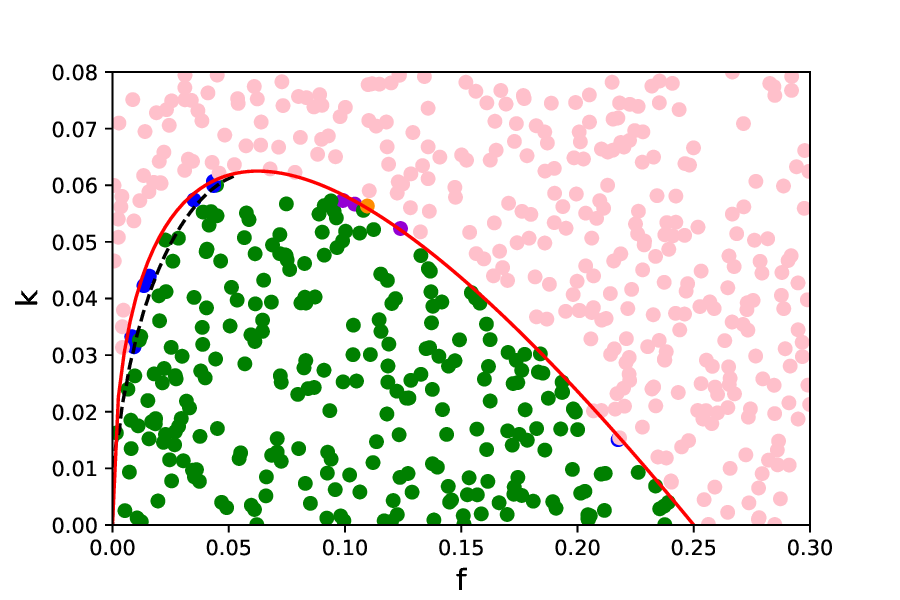}
    \vspace{-5 pt}
    \caption{\footnotesize  
    The hase diagram for solution prediction with stability information using incomplete data. 
    The blue points, green points, brown points respectively represent \textit{2-unstable-solution}, \textit{1-stable-1-unstable-solution}, \textit{2-stable-solution}, and the black dashed curve plotted on $f\sqrt{f^2-4f(f+k)^2}+f^2-2(f+k)^3=0$ gives the boundary of $\Omega_{1,1}$ and $\Omega_{1,2}$.}
    \label{fig:stability_incomplete}
    \vspace{-20 pt}
\end{figure}

Finally, we train $\Phi^s_{\rm PSNN}$ on the incomplete data set, and the phase diagram for stability is presented in 
\Cref{fig:stability_incomplete}. The above studies show that even trained with incomplete data, the PSNN-based algorithm continues to excel in forecasting the number of solutions for given parameters, predicting the phase boundary in parameter space, and determining the stability of learned solutions. 

%%\vspace{-2 mm}

%%%%%%%%%%%%%%%%%%%%%%%%%%%%%%%%%%%%%%%%%%%%%%%%%%%%%%%%%%%%%%%
\section{Conclusion}
\label{sec:Conclusions}

Systems of ordinary differential equations (ODEs) and dynamical systems with many parameters are widely used in scientific modeling with
emerging applications.  Identifying parameters for which solutions of such systems exist
and possess distinguished properties is a challenging task, particularly for complex and large
systems with many parameters. Here, we have initiated the development of a data-driven machine learning approach
to tackle such an important and difficult problem. Our approach is equation-free and applicable to finding solutions for general parameterized nonlinear systems.

We have first introduced target functions, $\Phi = \Phi(U, \Theta)$ and $\Phi^{\rm s}(U, \Theta),$
that characterize the desired parameter-solution properties, and then constructed 
parameter-solution neural
networks (PSNNs), $\Phi_{\rm PSNN}$ and $\Phi^{\rm s}_{\rm PSNN}$, for studying solutions and the stability of solutions, respectively. Each of these neural networks
couples a parameter-network and a solution-network with different structures, allowing us to treat
the parameters and solutions differently due to the different regularities of the
target functions for different
variables. We have also developed numerical methods to locate
solutions and determine their stability with our trained networks. 
We have presented a detailed analysis to show the convergence of our designed neural 
networks to the target functions with respect to the increase in network sizes, and have 
also obtained the related error estimates. 
Our extensive numerical results on the Gray--Scott model have confirmed our
convergence analysis and demonstrate the effectiveness of our approach in finding solutions, predicting phase boundaries, and assessing solution stability. 
In contrast to traditional techniques, our new approach, which is developed with the help 
of rigorous analysis, makes it possible to explore efficiently the 
entire solution space corresponding to each parameter, and further locate the multiple solutions
approximately and determine their solution stability. 

%%This novel network is both analyzed theoretically and tested numerically in our paper, and shown able %%to produce accurate approximation of the distribution. Moreover, we have developed a theory with %%quantitative estimates, which will indicate the expecting size of neural network according to equation %%system and the accuracy level we aim at. And as shown in the numerical results on 
%%the Gray--Scott model, our algorithm successfully predicts the number of solutions and 
%%the stability of each solution, and locates the solution and depicts the phase diagram precisely, %%showing great potential to solve for steady-states of reaction-diffusion models.

%%%%%%%%%%%%%%%%
\begin{comment}
To tackle such an important and difficult problem, in this work we initiate the development 
of a machine learning approach.  We consider systems of autonomous ODE with parameters, and 
have designed a novel architecture, a parameter-solution neural network (PSNN), which can predict the relation of parameters and variables, with the sub-network with different structures of the parameter and the variable permitted in the meantime. Based on the predicted distribution from network, we proposed our algorithm. 
\end{comment}
%%%%%%%%%%%%%%%%%%%

While our initial studies are promising, we would like to 
address several issues for the further development of our theory and numerical methods. 
First, we have used one target function $\Phi$ for solutions 
and another $\Phi^s$ for their stability. It may be desirable if these two can be combined to make the approach more efficient. 
Moreover, it demands further consideration and innovative approaches to manage large systems effectively. 
 For example, our post-processing method for locating solutions for a given parameter
$\Theta \in \Omega$ relies on a set of sample points
$\cU \subset D$ generated from uniform grids on $D$. For a high-dimensional problem, where the dimension 
of the solution vector is large, our method may not be feasible as the number
of data points in $\cU$ can be large. Alternative algorithms for locating solutions more efficiently
for high-dimensional problems need to be designed.  
A key question is the selection of a density from which to sample the points. Additionally, training networks with incomplete data is crucial in applications, especially for large
systems. Missing information in incomplete data, however, may be recovered from the 
structure of the underlying systems, as demonstrated by our initial examples in \Cref{subsec: incplt data}. 
Furthermore,  future work may explore methods that integrate equation-based and data-driven approaches.
Last but not least, applications of our framework to more realistic parameterized dynamic systems and/or parameterized nonlinear systems of algebraic equations will be considered in the future.

\appendix

%%%%%%%%%%%%%%%%%%%%%%%%%%%Appendix A%%%%%%%%%%%%%%%%%%%%%%%%%%

\section{Mean-shift algorithm}
%\renewcommand{\thesection}{A}
%\setcounter{equation}{0}
%section{}
\label{appendix}

%%%%%%%%%%%%%%%%%%%%
\begin{comment}
\BL{Need to describe the method briefly and then present the algorithm. Need to 
revise the algorithm using the same notations as in other algorithms. But need to 
decide first if we really want to include this algorith. The numerical results
on phase digrams from the PSNN clustering and mean-shift, it's hard to say that 
the mean-shift is worse. I in fact suggest to completely remove the mean-shift part.}
\end{comment}
%%%%%%%%%%%%%%%%%%%%

\begin{algorithm}[hptb]
\caption{Finding solutions by mean-shift and clustering}
\label{alg:meanshift}
\SetKwProg{Fn}{Function}{}{}
\SetKwFunction{Fdistance}{dist}
\SetKwFunction{Fneighbor}{neighbor}

\SetKwInput{Input}{Input}
\SetKwInput{Output}{Output}
\Input{A parameter $\Theta\in \Om$, a training set $\cT_{\rm train}$ (cf.\ \eqref{Ttrain}), 
%%given in section~\ref{subsec:TrainingPSNN}, 
a cut value $L_{\rm cut}\in (0,1),$, two numerical parameters
$\gamma_{\rm P} > 0$ and $\gamma_{\rm S}>0$ for defining neighbords in
$\Omega $ and $D$, respectively, and a tolerance $\ep_{\rm tol}$.} 
\Output{The set of centers $\cU^\Theta\subset S^\Theta$}

let $y_j^i = \Phi(U_j^i, \Theta_i)$ for $j=1,\cdots, S+\cN_{m_i}$, and $i=1,2,\cdots, P$ \;

%%select $\delta>0, \gamma>0$ to define the searching neighborhood;

%%define  \Fneighbor{$ \tilde{U}, \tilde{\Theta}$}:= $\{ (U_j^i, \Theta_i): \| \Theta_i -%%\tilde{\Theta}\|_{l^\infty}<\del, \| U^i_j - \tilde{U}\|_{l^\infty}<\gamma\}$\;

define  \Fneighbor{$ \tilde{U}, \tilde{\Theta}$}:= $\{ (U_j^i, \Theta_i): \| \Theta_i -\tilde{\Theta}\|_{l^\infty}<\gamma_{\rm P}, \| U^i_j - \tilde{U}\|_{l^\infty}<\gamma_{\rm S}\}$\;

\SetKwFunction{FMeanshift}{Meanshift}
\Fn{\FMeanshift{$\Theta,\gamma_{\rm P},\gamma_{\rm S}, \ep_{\rm tol}$}}{
Pick $U \in D$ randomly\;
$\ep \gets 1$\;
\text{condition} $\gets$ \textbf{True}\;
\While{$\ep \geq \ep_{\rm tol}$ and \text{condition}= \textbf{True}}{
   \eIf{$y_{\text{sum}}:=\underset{\Fneighbor{$U, \Theta$}}{\sum}y^i_j >0$}{
   $U^0 \gets U$ \;
   $U \gets \underset{\Fneighbor{$U, \Theta$}}{\sum}  \frac{y^i_j}{y_{\text{sum}}}  U_j^i$\;
   $\ep \gets$ \Fdistance{$U,U^0 $}
   }{
   \text{condition} $\gets$ \textbf{False}
   }
  }
  \If{condition}{
     \KwRet $U$
  }
}
 select $\ep_{\rm tol} \in (0,1)$\;
set $\cU_{\rm collected}$ to be an empty set\;
  \For{$1 \leq k \leq N_{initial}$}{
  $U_M \gets $ \FMeanshift{$\Theta,\gamma_{\rm P},\gamma_{\rm S}, \ep_{\rm step}$}

  \If{$\frac{1}{|\Fneighbor{$U_M, \Theta$}|}\underset{\Fneighbor{$U_M, \Theta$}}{\sum}y^i_j \geq L_{\rm cut}$}{
     Collect $U_M$ to $\mathcal{U}_{\rm collected}$
  }
}
  obtain $\cU_c \gets$ \FCluster{$\cU_{\rm collected}$} where \FCluster is given in Algorithm \ref{alg:cluster}\;
\end{algorithm}

%%%%%%%%%%%%%%%%%%%%%%%%Acknowledgement%%%%%%%%%%%%%%%%%%%%%%%%

\vspace{-5 mm}

%%%%%%%%%%%%%%%%%%%%%%%%%%%References%%%%%%%%%%%%%%%%%%%%%%%%%%

\bibliographystyle{siamplain}

\bibliography{ref}

\end{document}